\documentclass{article}
\usepackage[utf8]{inputenc}

\usepackage{blkarray}

\usepackage[normalem]{ulem}
\usepackage{cite}

\usepackage[english]{babel} 
\usepackage{graphicx}
\usepackage{float}

\usepackage{authblk}

\newmuskip\pFqskip
\pFqskip=6mu
\mathchardef\pFcomma=\mathcode`, 

\usepackage{mathtools}

\DeclarePairedDelimiterX\phys[3]{\langle}{\rangle}{#1 \delimsize\vert\mathopen{} #2 \delimsize\vert\mathopen{} #3}

\usepackage{caption}
\usepackage{subcaption}

\usepackage{bm,amsmath,amssymb,comment,esint,accents,physics}
\usepackage{amsthm}
\usepackage{dsfont,enumerate}

\usepackage{tikz}

\usepackage{soul}
\usepackage{color}

\usepackage[margin=0.75in]{geometry}

\makeatletter
\newsavebox{\@brx}
\newcommand{\llangle}[1][]{\savebox{\@brx}{\(\m@th{#1\langle}\)}%
  \mathopen{\copy\@brx\kern-0.5\wd\@brx\usebox{\@brx}}}
\newcommand{\rrangle}[1][]{\savebox{\@brx}{\(\m@th{#1\rangle}\)}%
  \mathclose{\copy\@brx\kern-0.5\wd\@brx\usebox{\@brx}}}
\makeatother

\usepackage[colorlinks,bookmarks=false,citecolor=red,linkcolor=blue,hyperfootnotes=true,urlcolor=magenta]{hyperref}

\newtheorem{theorem}{Theorem}[section]
\newtheorem{proposition}[theorem]{Proposition}
\newtheorem{corollary}{Corollary}[theorem]
\newtheorem{lemma}[theorem]{Lemma}
\theoremstyle{definition}
\newtheorem{definition}[theorem]{Definition}
\newtheorem{remark}[theorem]{Remark}

\newcommand{\al}{\alpha}
\newcommand{\be}{\beta}

\newcommand{\cD}{\mathcal{D}}

\newcommand{\bA}{\boldsymbol{A}}
\newcommand{\x}{\boldsymbol{x}}

\newcommand{\cZ}{\mathcal{Z}}

\newcommand{\NN}{\mathbb{N}}

\newcommand{\po}{\preceq}
\newcommand{\poo}{\preceq_{(\alpha,\beta)}}
\newcommand{\cDext}{\cD^{\text{\rm ext}}}
\newcommand{\lspan}{\text{\rm span}}

\begin{document}

\title{\bf $m$-distance-regular graphs and their relation to multivariate $P$-polynomial association schemes}

\author[1]{Pierre-Antoine Bernard}
\author[2]{Nicolas Crampé}
\author[1,3]{Luc Vinet}
\author[1]{Meri Zaimi}
\author[1]{Xiaohong Zhang}

\affil[1]{\it Centre de Recherches Math\'ematiques (CRM), Universit\'e de Montr\'eal, P.O. Box 6128, Centre-ville
Station, Montr\'eal (Qu\'ebec), H3C 3J7, Canada,}

\affil[2]{\it Institut Denis-Poisson CNRS/UMR7013 - Université de Tours - Université d’Orléans,  Parc de Grandmont, 37200 Tours, France,}

\affil[3]{\it IVADO,  Montréal (Québec), H2S 3H1, Canada.}

\date{\today}

\maketitle
\begin{abstract} 
An association scheme is $P$-polynomial if and only if it consists of the distance matrices of a distance-regular graph. Recently, 
bivariate $P$-polynomial association schemes of type $(\alpha,\beta)$ were introduced by Bernard et al., and multivariate $P$-polynomial association schemes were later defined by Bannai et al. In this paper, the notion of $m$-distance-regular graph is defined and shown to give a graph interpretation of the multivariate $P$-polynomial association schemes. Various examples are provided. Refined structures and additional constraints for multivariate $P$-polynomial association schemes and $m$-distance-regular graphs are also considered. In particular, bivariate $P$-polynomial schemes of type $(\alpha, \beta)$ are discussed, and their connection to 2-distance-regular graphs is established.
\end{abstract}
\section{Background}

A connected undirected graph $G = (X,\Gamma)$ is said to be \textit{distance-regular} if for fixed $i$ and $j$, the number $p_{ij}^k$ of vertices $z \in X$ at distance $i$ from a vertex $x$ and at distance $j$ from a vertex $y$ depends only on the distance $k$ between $x$ and $y$.  This pivotal concept was introduced by Biggs \cite{biggs1971intersection, biggs1993algebraic} and led to significant results in graph theory and algebraic combinatorics \cite{brouwer2012distance}. In particular, these graphs were shown to be in one-to-one correspondence with $P$-polynomial association schemes \cite{BannaiBannaiItoTanaka+2021}.

Symmetric association schemes are central objects in algebraic combinatorics, arising in the theory of error-correction codes \cite{delsarte1998association} and the study of combinatorial designs \cite{BannaiBannaiItoTanaka+2021}. They can be defined as a set of $(0,1)$-matrices $\mathcal{Z} = \{A_0, A_1, \dots, A_N\}$ satisfying certain properties:
\begin{enumerate}
\item[(i)] $A_0 = I$, where $I$ is the identity matrix;
\item[(ii)] $\sum_{i = 0}^N A_i = J$, where $J$ is the matrix of ones;
\item[(iii)] $A_i = A_i^t$ for all $i \in \{0,1,\dots, N\}$;
\item[(iv)] The vector space spanned by these matrices is closed under matrix multiplication, i.e.\ there exist coefficients $p_{ij}^k$ such that the Bose-Mesner relations are verified,
\begin{equation}
A_i A_j = \sum_{k=0}^{N}p_{ij}^k A_k.
\end{equation}
\end{enumerate}
The matrices $A_i$ in a symmetric association scheme  commute and thus share common eigenspaces, with eigenvalues $\theta_{ij}$. The projectors $\{E_0, E_1,\dots, E_N\}$ onto these eigenspaces satisfy 
\begin{equation}
A_i E_j = \theta_{ij} E_j, \quad E_i E_j = \delta_{ij} E_i, \quad \sum_{i=0}^{N} E_i = I, \quad E_i^*=E_i,
\end{equation}
and offer an alternative basis for the Bose-Mesner algebra of $\mathcal{Z}$, i.e.\ $\langle A_0, A_1, \dots, A_N\rangle_\mathbb{C} = \langle E_0, E_1, \dots, E_N\rangle_\mathbb{C}.$

A scheme is said to be $P$-polynomial (with respect to the ordering of its matrices $A_i$ with the label $i$) if there exist polynomials $v_i(x)$ of degree $i$ such that
\begin{equation}
A_i = v_i(A_1), \quad \forall i \in \{0,1,\dots, N\}.
\end{equation}
It is known (see Proposition 1.1 of Section 3.1 in \cite{BannaiBannaiItoTanaka+2021}) that a set of matrices $\mathcal{Z}$ is a $P$-polynomial association scheme if and only if it is composed of the distance matrices $A_i$'s of a distance-regular graph $G$. In other words, the matrices $A_i$ are given by
\begin{equation}
(A_i)_{xy} = \left\{
	\begin{array}{ll}
		1  & \mbox{if } \text{dist}(x,y) = i,\\
		0 & \mbox{otherwise, }
	\end{array}
\right.
\end{equation}
where $\text{dist}(x,y)$ is the distance between $x$ and $y$, the length of a shortest path between $x$ and $y$ in $G$. $A_i$ is called the $i$-th distance matrix of $G$. 

The dual adjacency matrices $A_i^*$ of an association scheme are the diagonal matrices whose entries are defined in terms of an arbitrary fixed vertex $x_0$ and the projectors $E_i$, 
\begin{equation}
 (A_i^*)_{xx} = |X|(E_i)_{xx_0},
\end{equation}
where $X$ is the vertex set of the scheme. 
A scheme is said to be $Q$-polynomial if its dual adjacency matrices $A_i^*$ can be expressed as a polynomial $v_i^*$ of degree $i$ in $A_1^*$,
 \begin{equation}
A_i^* = v_i^*(A_1^*), \quad \forall i \in \{0,1,\dots, N\}.
\end{equation} 

While a complete classification of distance-regular graphs is yet to be achieved, a seminal result by Leonard \cite{leonard1982orthogonal,leonard1984parameters,BannaiBannaiItoTanaka+2021}
 showed that those whose associated  symmetric association schemes also have the $Q$-polynomial property are associated to hypergeometric orthogonal polynomials of the Askey scheme. 

The success of the $P$- and $Q$-polynomial properties in bridging graph theory, algebraic combinatorics and the study of special functions leaves one to wonder if a generalization of theses properties could lead to wider results on regular graphs and symmetric association schemes. In a recent work, Bernard et al. introduced the notion of \textit{bivariate $P$- and $Q$-polynomial association schemes of type $(\alpha,\beta)$} and showed that such schemes were related to families of bivariate polynomials verifying a special type of recurrence relations \cite{bernard2022bivariate}. They further gave multiple examples of association schemes which are not $P$- or $Q$-polynomial but are bivariate $P$- and/or $Q$-polynomial of type $(\alpha,\beta)$ for some $0\leq \alpha\leq 1$ and $0\leq \beta<1$. Some were notably related to multivariate extensions of polynomials in the Askey scheme \cite{bernard2022bivariate,crampe2023bivariate}.

As a generalization, Bannai,  Kurihara,
Zhao and
 Zhu put forward a definition of \textit{multivariate $P$- and $Q$-polynomial association schemes} \cite{bannai2023multivariate}, which deals with polynomials of arbitrary rank. It is worth noting that the approach is distinct to the one in \cite{bernard2022bivariate}, even for the bivariate case. Indeed, bivariate $P$- and $Q$-polynomial association schemes of type $(\alpha,\beta)$ require the introduction of a partial order $\preceq_{(\alpha,\beta)}$,   
which puts additional constraints on the subset $\mathcal{D} \subset \mathbb{N}^2$ of labels for the matrices of the scheme and on the associated bivariate polynomials $v_{ij}(x,y)$ such that
\begin{equation}
    A_{ij} = v_{ij}(A_{10}, A_{01}), \quad (i,j) \in \mathcal{D}.
\end{equation} 
 
As a result, the polynomials $v_{ij}(x,y)$ satisfy the same recurrences as the matrices $A_{ij}$'s of the scheme do. In contrast, the definition in \cite{bannai2023multivariate} only uses a total order (drops the use of partial orders), and puts less restrictions on the set of labels $\mathcal{D}$ as well as the polynomials, but introduces an additional condition on the adjacency algebra. This definition seems to encompass more association schemes and seems more natural for one investigating the properties of adjacency matrices. However, the polynomials may not satisfy the recurrence relations satisfied by  the matrices of the scheme. 

Now that the $P$- and $Q$-polynomial properties have been extended to higher rank, a natural question is to find a generalization of the correspondence between distance-regular graphs and $P$-polynomial association schemes. With this goal in mind, here we introduce a definition of \textit{$m$-distance-regular} graphs and show that it is in one-to-one correspondence with the $m$-variate $P$-polynomial association schemes defined in \cite{bannai2023multivariate}. We also investigate how the notions of $(\alpha,\beta)$-compatibility introduced in \cite{bernard2022bivariate} to study bivariate $P$-polynomial association schemes can be incorporated more generally within this framework. The paper is organised in the following way. In Section \ref{s2}, we recall the definition of multivariate $P$-polynomial association schemes and an equivalent description of such schemes in terms of  the intersection numbers. In Section \ref{s3}, we define $m$-distance-regular graphs and establish their correspondence with multivariate $P$-polynomial association schemes. Multiple examples are presented in Section \ref{s4}. In Section \ref{s5}, we refine the structure of multivariate $P$-polynomial association schemes and of $m$-distance-regular graphs using a partial order, and we examine some additional property on the set of labels of the adjacency matrices. This allows us to compare the association schemes considered in \cite{bernard2022bivariate} with those in \cite{bannai2023multivariate}, and to identify a class of $2$-distance-regular graphs that are in correspondence with the schemes lying in the intersection of the two types of schemes. Some open questions are given in Section \ref{s6}.

\section{Multivariate $P$-polynomial association schemes}\label{s2}

In this section, we recall the definition of multivariate $P$-polynomial association schemes introduced in \cite{bannai2023multivariate} and a proposition regarding their intersection numbers.

\begin{definition}\label{defmo} (Definition 2.2 of \cite{bannai2023multivariate}) A monomial order $\leq$ on $\mathbb{C}[x_1, x_2, \dots, x_m]$ is a relation on the set of
monomials $x_1^{n_1}x_2^{n_2}\dots x_m^{n_m}$ satisfying:
\begin{itemize}
\item[(i)] $\leq$ is a total order;
\item[(ii)] for monomials $u$, $v$, $w$, if $u \leq v$, then $wu \leq wv$;
\item[(iii)] $\leq$ is a well-ordering, i.e. any non-empty subset of the set of monomials has a minimum element under $\leq$.
\end{itemize}
\end{definition}
Since each monomial $x_1^{n_1}x_2^{n_2}\dots x_m^{n_m}$ can be associated to the tuple  $(n_1,n_2,\dots,n_m) \in \mathbb{N}^m$, we shall use the same order and notation $\leq$ on $\mathbb{N}^m = \{(n_1,n_2,\dots, n_m) \ | \ n_i \text{ are non-negative integers}\}$.

\begin{definition}\label{cpol} \cite{bannai2023multivariate} For a monomial order $\leq$ on $\mathbb{N}^m$, an $m$-variate polynomial $v(x_1, x_2, \dots, x_m) =
v(\x)$ is said to be of \textit{multidegree} $n \in \mathbb{N}^m$ if $v(\x)$ is of the form 
\begin{equation}
v(\x) = \sum_{a \leq n} f_a \x^a, \quad \x^a = x_1^{a_1} x_2^{a_2} \dots x_m^{a_m},
\end{equation}
with $f_n \neq 0$.
\end{definition}

Now, let $e_i \in \mathbb{N}^m$ be the tuple whose $i$-th entry is 1 and all the other entries are 0. 

\begin{definition}\label{defmv} (Definition 2.7. of \cite{bannai2023multivariate}) Let $\mathcal{D} \subset \mathbb{N}^m$ contain $e_1, e_2, \dots, e_m$ and $\leq$ be a monomial order on $\mathbb{N}^m$. A
commutative association scheme $\mathcal{Z}$ is called $m$-variate $P$-polynomial on the domain $\mathcal{D}$ with respect to $\leq$ if the following three conditions are satisfied:
\begin{enumerate}
\item[(i)] If $n = (n_1, n_2, \dots, n_m) \in \mathcal{D}$  and $0 \leq n_i' \leq  n_i$ for all $ i$, then $n' = (n_1', n_2', \dots, n_m') \in \mathcal{D}$;
\item[(ii)]  There exists a relabeling of the adjacency matrices of $\mathcal{Z}$:
\begin{equation}
\mathcal{Z} = \{A_n \ | \ n \in \mathcal{D}\}
\end{equation}
such that for all $n \in \mathcal{D}$ we have
\begin{equation}
A_n = v_n(A_{e_1}, A_{e_2}, \dots, A_{e_m})
\end{equation}
where $v_n(\boldsymbol{x})$ is an $m$-variate polynomial of multidegree $n$ and all monomials $\x^a$ in $v_n(\x)$ satisfy $a \in \mathcal{D}$;

\item[(iii)] For $i = 1,2, \dots, m$ and $a = (a_1, a_2, \dots, a_m) \in \mathcal{D}$, the product $A_{e_i}  A_{e_1}^{a_1}A_{e_2}^{a_2} \dots A_{e_m}^{a_m}$ is a
linear combination of
\begin{equation}
\{ A_{e_1}^{b_1}A_{e_2}^{b_2} \dots A_{e_m}^{b_m}\ | \ b = (b_1, \dots, b_m) \in \mathcal{D}, \ b \leq a + e_i  \}.
\end{equation}
\end{enumerate}
\end{definition}

In the following section, by use of edge partitions of a graph and a monomial order $\leq$ on $\mathbb{N}^m$, we define a new type of graphs with special combinatorial properties and show that they give rise to $m$-variate $P$-polynomial association schemes, as in Definition \ref{defmv}. To do so, we will use the following equivalent characterization of multivariate $P$-polynomial association schemes in terms of the intersection numbers. For completeness, a proof is presented in Appendix \ref{sec:appendix}.

\begin{proposition}\label{mprop}(Proposition 2.15 of \cite{bannai2023multivariate}) Let $\mathcal{D}\subset \mathbb{N}^m$ such that $e_1, e_2, \dots, e_m \in \mathcal{D}$ and $\mathcal{Z} = \{A_n \ | \ n \in \mathcal{D}\}$ be a commutative association scheme. Then the following two statements are
equivalent:
\begin{enumerate}
\item[(i)] $\mathcal{Z}$ is an $m$-variate $P$-polynomial association scheme on $\mathcal{D}$ with respect to a monomial order $\leq$;
\item[(ii)] The condition (i) of Definition \ref{defmv} holds for $\mathcal{D}$ and the intersection numbers satisfy,  
for each $i = 1,2,\dots, m$ and each $a \in \mathcal{D}$, $p_{e_i, a}^b \neq 0$ for $b \in \mathcal{D}$ implies $b \leq a + e_i$. Moreover, if $a + e_i \in \mathcal{D}$, then $p_{e_i, a}^{a + e_i} \neq 0$ holds.
\end{enumerate}
\end{proposition}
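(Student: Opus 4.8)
The plan is to treat the two implications separately, basing both on the same observation: the family $\{A_n \mid n\in\mathcal{D}\}$ and the ``monomial family'' $\{A^a \mid a\in\mathcal{D}\}$, where $A^a := A_{e_1}^{a_1}A_{e_2}^{a_2}\cdots A_{e_m}^{a_m}$, are two bases of the Bose--Mesner algebra related by a change of basis that is triangular, with nonzero diagonal, with respect to the monomial order $\leq$. Two elementary facts about monomial orders will be used: $0$ is the minimum of $(\mathbb{N}^m,\leq)$ (otherwise axiom (ii) of Definition \ref{defmo} produces an infinite descending chain, contradicting well-ordering), and the compatibility is strict, $u<v\Rightarrow wu<wv$ (since $w+u=w+v$ forces $u=v$ in $\mathbb{N}^m$). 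Hence, if $a\in\mathcal{D}$ and $a\neq 0$, then $a=a'+e_i$ for some $i$, with $a':=a-e_i\in\mathcal{D}$ by condition (i) of Definition \ref{defmv} and $a'<a$.

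For the implication (ii) $\Rightarrow$ (i), fix the monomial order $\leq$ from the hypothesis. The first step is to prove, by induction along the well-order $\leq$, that for every $a\in\mathcal{D}$
\[
A^a=\sum_{n\in\mathcal{D},\ n\leq a} c^{(a)}_n A_n \qquad\text{with }\ c^{(a)}_a\neq 0 .
\]
The base case $a=0$ is $A^0=A_0=I$. For the step, write $a=a'+e_i$, apply $A_{e_i}$ to the expansion of $A^{a'}$, and use $A_{e_i}A_n=\sum_b p^b_{e_i,n}A_b$ with the intersection-number hypothesis: $p^b_{e_i,n}\neq 0$ forces $b\leq n+e_i\leq a'+e_i=a$, so only labels $\leq a$ occur. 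For the leading coefficient, a term $A_a$ can come only from an $n\leq a'$ with $p^a_{e_i,n}\neq 0$, which forces $a\leq n+e_i\leq a$, hence $n=a'$; so $c^{(a)}_a=c^{(a')}_{a'}\,p^{a'+e_i}_{e_i,a'}$, which is nonzero by the induction hypothesis and the ``moreover'' clause (since $a'+e_i=a\in\mathcal{D}$). This triangular system inverts to $A_n=\sum_{a\leq n,\ a\in\mathcal{D}}d^{(n)}_aA^a$ with $d^{(n)}_n\neq 0$, so $\{A^a\mid a\in\mathcal{D}\}$ is a basis and $v_n(\x):=\sum_{a\leq n,\ a\in\mathcal{D}}d^{(n)}_a\x^a$ realizes condition (ii) of Definition \ref{defmv} (multidegree $n$, monomials in $\mathcal{D}$). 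For condition (iii), expand $A_{e_i}A^a=\sum_{n\leq a}c^{(a)}_n\sum_b p^b_{e_i,n}A_b$; the hypothesis gives $b\leq n+e_i\leq a+e_i$, and re-expanding each $A_b$ in the monomial basis (which uses only labels $\leq b\leq a+e_i$) produces the required linear combination.

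For the implication (i) $\Rightarrow$ (ii), condition (i) of Definition \ref{defmv} is part of the hypothesis, so only the statement on intersection numbers is at stake. Condition (ii) of Definition \ref{defmv} gives $A_n=\sum_{a\leq n,\ a\in\mathcal{D}}f^{(n)}_aA^a$ with $f^{(n)}_n\neq 0$, a triangular system that inverts to $A^a=\sum_{n\leq a}g^{(a)}_nA_n$ with $g^{(a)}_a\neq 0$, so $\{A^a\}$ is again a basis. Now compute $A_{e_i}A_a$ in two ways: it equals $\sum_b p^b_{e_i,a}A_b$, and it also equals $\sum_{c\leq a}f^{(a)}_cA_{e_i}A^c=\sum_{c\leq a}f^{(a)}_cA^{c+e_i}$; condition (iii) of Definition \ref{defmv}, applied to each index $c\in\mathcal{D}$ (a monomial index of $v_a$), rewrites $A^{c+e_i}=A_{e_i}A^c$ as a combination of terms $A^b$ with $b\in\mathcal{D}$ and $b\leq c+e_i\leq a+e_i$. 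Re-expanding those in the $A_n$-basis shows every $A_n$ occurring in $A_{e_i}A_a$ has $n\leq a+e_i$, the first assertion. For the ``moreover'' clause, assume $a+e_i\in\mathcal{D}$ and track the top monomial $A^{a+e_i}$: among the $A^{c+e_i}$, only $c=a$ can feed it (if $c+e_i\notin\mathcal{D}$ then $A^{a+e_i}$ would need $a\leq c$, forcing $c=a$, a contradiction; if $c+e_i\in\mathcal{D}$ it is a basis vector, equal to $A^{a+e_i}$ only when $c=a$), so its coefficient is $f^{(a)}_a\neq 0$, and passing to the $A_n$-basis gives $p^{a+e_i}_{e_i,a}=f^{(a)}_a\,g^{(a+e_i)}_{a+e_i}\neq 0$.

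The main obstacle throughout is the leading-term bookkeeping: showing that the diagonal coefficients never vanish, so that the two triangular changes of basis really are invertible, and correspondingly proving the ``moreover'' assertions. In (ii) $\Rightarrow$ (i) this rests entirely on the ``moreover'' hypothesis $p^{a'+e_i}_{e_i,a'}\neq 0$ propagating through the induction; in (i) $\Rightarrow$ (ii) it rests on isolating the unique source $c=a$ of the top monomial. Everything else is manipulation of the monomial order and routine linear algebra.
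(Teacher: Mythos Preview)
Your proof is correct and follows essentially the same approach as the paper's proof in Appendix~\ref{sec:appendix}: an induction along the well-order $\leq$ establishing a triangular change of basis between $\{A_n\mid n\in\mathcal{D}\}$ and $\{\bA^a\mid a\in\mathcal{D}\}$ with nonzero diagonal, together with leading-term bookkeeping for the ``moreover'' clauses. The only organizational difference is that the paper packages the (ii)~$\Rightarrow$~(i) induction via an auxiliary extended domain $\cDext$ (Lemma~\ref{statement}), whereas you handle condition~(iii) of Definition~\ref{defmv} directly by re-expanding the $A_b$'s in the monomial basis; this is a cosmetic rearrangement, not a different argument.
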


We now introduce the notion of $m$-distance and $m$-distance-regular graphs.

\section{$m$-distance-regular graphs}\label{s3}
Let $G = (X,\Gamma)$ be a connected undirected graph with vertex set $X$ and edge set $\Gamma \subseteq X\times X$. We consider an $m$-partition $\{\Gamma_i\ |\ i = 1,2,\dots, m\}$ of $\Gamma$, i.e.
\begin{equation}
\Gamma = \Gamma_1 \sqcup \Gamma_2 \sqcup \dots \sqcup \Gamma_m, \quad \Gamma_i \neq \emptyset, 
\end{equation}
where $\sqcup$ denotes disjoint union of sets. 
This can be interpreted as a colouring of the edges of $G$ with $m$ colors, with edges in $\Gamma_i$ colored $i$. It should not be confused with the definition of association schemes as colourings of the edges of the complete graph, see for instance \cite{van1999three}.

In the following, a \textit{walk} on $G$ is a finite ordered sequence of edges $\xi = (\gamma_1, \gamma_2, \dots, \gamma_L)$ such that $\gamma_i$ and $\gamma_{i+1}$ share a common vertex. Denote $\gamma_i$ as $\gamma_i= (y_i, y_{i+1}) \in \Gamma$. The vertices $y_1$ and $y_{L+1}$ constitute the two \textit{endpoints} of $\xi$. The endpoints of $\xi = \emptyset$ can be taken to be any $y \in X$. If for $i = 1,2,\dots, L$ the edge $\gamma_i$ has color $c_i$, that is, $\gamma_i \in \Gamma_{c_i}$, then we say that the walk $\xi = (\gamma_1,\gamma_2,\dots,\gamma_L)$ is of \textit{type} $c = (c_1,c_2,\dots, c_L)$.
\begin{definition}
Let $\xi = (\gamma_1,\gamma_2,\dots,\gamma_L)$ be a walk on $G$. Its\textit{ $m$-length} $\ell_m(\xi)$ with respect to the $m$-partition $\{\Gamma_i |\ i = 1,2,\dots, m\}$ is the vector in $\mathbb{N}^m$ whose $i$-th coordinate is the number of edges in the walk of  color $i$, that is, 
\begin{equation}
\ell_m(\xi) = (|\{j\ | \ \gamma_j \in \Gamma_1\}|, |\{j\ | \ \gamma_j \in \Gamma_2\}|, \dots, |\{j\ | \ \gamma_j \in \Gamma_m\}|). 
\end{equation}
\end{definition}
\begin{definition}
 Let $\leq$ be a monomial order on $\mathbb{C}[x_1, \dots, x_m]$.  The \textit{$m$-distance} $d_m$ between two vertices $x,y$ of a connected graph $G=(X,\Gamma = \Gamma_1 \sqcup  \dots \sqcup \Gamma_m)$ is defined as
 \begin{equation}
 d_m(x,y) = \text{min}_{\leq}\{\ell_m(\xi) \ | \ \xi \text{ is a walk between }x,y\}.
 \end{equation}
\end{definition}
This distance is well-defined because Definition \ref{defmo} requires that any non-empty subset of $\mathbb{N}^m$ has a minimum under $\leq$ and the set of walks between any two vertices is non-empty since $G$ is connected. We further note that the $m$-distance is symmetric (i.e.\ $d_m(x,y) = d_m(y,x)$) because $G$ is undirected so that any walk from $x$ to $y$ gives rise to a walk  from $y$ to $x$ when the order of edges is reversed. In the following, we denote by $\mathcal{D}$ the set of all $m$-distances between pairs of vertices in $G$,
\begin{equation}
\mathcal{D} = \{ \ell = (\ell_1,\dots, \ell_m) \in \mathbb{N}^m \ | \ \exists \ x,y \in X \ \text{s.t.}\ d_m(x,y) = \ell \}. \label{eq:distancesD}
\end{equation}
Note that $o:=(0,0,\dots, 0) \in \mathcal{D}$ since $d_m(x,x) =(0,0,\dots, 0)$ for all $x \in X$.  It is worth noting that $\Gamma_i \neq \emptyset$ does not imply that $e_i \in \mathcal{D}$. Indeed, some orders $\leq$ (e.g.\ the \textit{lex} order) allow the existence of non-zero vectors $a \notin \{e_j |\ j = 1,2,\dots,m\}$ satisfying $a \leq e_i$ for some $i \in \{1,2,\dots,m\}$. In this case two vertices connected by an edge with color $i$ may not be at $m$-distance $e_i$. 

We define the distance matrices associated with the $m$-distance $d_m$ of a graph as follows.
\begin{definition}
    Let $G = (X,\Gamma)$ be a graph with $m$-partition $\{\Gamma_i\ |\ i = 1,2,\dots, m\}$ and let $\mathcal{D}$ be the set of $m$-distances of $G$ with respect to a monomial order $\leq$ on $\mathbb{N}^m$ as in \eqref{eq:distancesD}. For $\ell \in \mathcal{D}$, the \textit{$\ell$-th $m$-distance matrix} $A_\ell$ of the graph $G$ is the matrix whose columns and rows are labelled by elements of $X$ and whose entries satisfy
\begin{equation}
(A_{\ell})_{xy} = \left\{
	\begin{array}{ll}
		1, & \mbox{if } d_m(x,y) = \ell,  \\
		0, & \mbox{otherwise.}
	\end{array}
\right.
\end{equation}
\end{definition}

Note that the $m$-distance matrices $A_\ell$ are symmetric, since the $m$-distance is a symmetric function of the vertices. In the case $m= 1$, the function $d_1$ reduces to the length of a shortest path in $G=(X,\Gamma)$ and the matrices $A_\ell$ correspond to the usual $\ell$-th distance matrix of $G$. 

We now define a generalization of the distance-regular property for a general $m \in \mathbb{N}\backslash\{0\}$ using the $m$-distance $d_m$. 

\begin{definition}\label{mdrg} A connected undirected graph is said to be \textit{$m$-distance-regular} if there exists an $m$-partition $\{\Gamma_i\ | \ i = 1,2,\dots,m\}$ of its edges and a monomial order $\leq$ such that $e_1$, $e_2, \dots, e_m$ are in the set $\mathcal{D}$ of all $m$-distances in $G$ and the number $p_{ab}^c$ of vertices $z$ at $m$-distance $a = (a_1, \dots, a_m)$ from a vertex $x$ and at $m$-distance $b= (b_1, \dots, b_m)$ from a vertex $y$ given that $x$ and $y$ are at $m$-distance $c = (c_1, \dots, c_m)$ does not depend on $x$ and $y$. 
\end{definition}

\begin{lemma} The set of $m$-distance matrices of an $m$-distance-regular graph forms a symmetric association scheme.
\end{lemma}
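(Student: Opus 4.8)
The plan is to verify the four defining axioms (i)–(iv) of a symmetric association scheme for the set $\mathcal{Z} = \{A_\ell \mid \ell \in \mathcal{D}\}$ of $m$-distance matrices. Axioms (i) and (ii) are essentially immediate: $A_o = I$ because $d_m(x,y) = o = (0,\dots,0)$ if and only if $x = y$ (the minimum $m$-length of a walk between distinct vertices is some $\ell \neq o$, since any nonempty walk has positive $m$-length and the empty walk only joins a vertex to itself), and $\sum_{\ell \in \mathcal{D}} A_\ell = J$ because every ordered pair $(x,y)$ of vertices has exactly one well-defined $m$-distance, which lies in $\mathcal{D}$ by definition \eqref{eq:distancesD}. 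Axiom (iii), $A_\ell = A_\ell^t$, was already observed in the text: $d_m$ is symmetric because $G$ is undirected, so reversing a walk from $x$ to $y$ gives a walk from $y$ to $x$ of the same $m$-length, hence $d_m(x,y) = d_m(y,x)$.

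The substance of the proof is axiom (iv), closure under multiplication. First I would unpack the entry $(A_a A_b)_{xy} = \sum_{z \in X} (A_a)_{xz}(A_b)_{zy} = \#\{z \in X \mid d_m(x,z) = a,\ d_m(z,y) = b\}$. By the $m$-distance-regular hypothesis, whenever $d_m(x,y) = c$ this count equals the constant $p_{ab}^c$, independent of the choice of $x,y$ with $d_m(x,y)=c$; and when $d_m(x,y) = c \notin \mathcal{D}$ the count is vacuously described since every pair has its $m$-distance in $\mathcal{D}$. Therefore $(A_aA_b)_{xy} = p_{ab}^{d_m(x,y)}$, which depends on $(x,y)$ only through $d_m(x,y)$; grouping the pairs $(x,y)$ by their common $m$-distance $c$ gives exactly $A_a A_b = \sum_{c \in \mathcal{D}} p_{ab}^c\, A_c$, establishing the Bose–Mesner relations with the $p_{ab}^c$ of Definition \ref{mdrg} as intersection numbers. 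Commutativity of the algebra then follows from symmetry: $A_aA_b = (A_aA_b)^t = A_b^t A_a^t = A_bA_a$.

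I expect the only genuine subtlety — and hence the step to state carefully rather than wave past — is the bookkeeping around the index set: one must check that $p_{ab}^c$ is defined (i.e. that the count genuinely depends only on $c$) for \emph{every} $c \in \mathcal{D}$ that actually occurs as an $m$-distance, and that no "stray" value of $d_m(x,y)$ outside $\mathcal{D}$ can arise, both of which are built into the definitions of $\mathcal{D}$ and of $m$-distance-regularity. A minor accompanying point is that the $A_\ell$ are genuine $(0,1)$-matrices with the right support and that there are finitely many of them (so the sums are finite), which is clear since $X$ is finite and each pair contributes to exactly one $A_\ell$. No appeal to Proposition \ref{mprop} or Definition \ref{defmv} is needed for this lemma; those come into play only when one further wants to identify the scheme as $m$-variate $P$-polynomial.
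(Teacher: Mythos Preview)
Your proposal is correct and follows essentially the same approach as the paper: a direct verification of the four association-scheme axioms, with the Bose--Mesner relations obtained by computing $(A_aA_b)_{xy}$ as the count $\#\{z \mid d_m(x,z)=a,\ d_m(z,y)=b\}$ and invoking the $m$-distance-regular hypothesis. The paper's proof is slightly terser but the logical content is identical; your added remarks on commutativity and on the bookkeeping around $\mathcal{D}$ are sound and do not diverge from the paper's argument.
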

\begin{proof}
Let $G = (X,\Gamma)$ be an $m$-distance-regular graph with respect to the partition $\Gamma = \Gamma_1 \sqcup \Gamma_2 \sqcup \dots \sqcup \Gamma_m$ and the monomial order $\leq$. Let $\mathcal{D}$ be the set of all $m$-distances between vertices of $G$, $A_\ell$ be the $\ell$-th $m$-distance of $G$ for $\ell\in\mathcal{D}$, and let $\mathcal{Z} = \{A_\ell\ | \ \ell \in \mathcal{D}\}$. 
We show that $\mathcal{Z}$ is a symmetric association scheme. 
First, we observe from the definition of $m$-distance that $d_m(x,y) =o$ if and only if $x =y$ and hence $A_o = I$.  
Let $x$ and $y$ be two vertices of $G$ at $m$-distance $d_m(x,y) = c$. Then for any $a = (a_1,\dots,a_m)$ and $b = (b_1,\dots,b_m)$ in $\mathcal{D}$ we have
\begin{equation}
\begin{split}
(A_a A_b)_{xy} &= \sum_{z \in X}(A_a)_{xz}(A_b)_{zy}\\
&= \sum_{\substack{z \in X\\
d_m(x,z) = a\\
d_m(z,y) = b }}1 \cdot 1\\
& = p_{ab}^c = \Big(\sum_{\ell \in \mathcal{D}}  p_{ab}^\ell A_\ell\Big)_{xy}.
\end{split}
\end{equation}
This holds for all $x$ and $y$, regardless of their $m$-distance. Therefore, the Bose-Mesner relations are verified at the matrix level,
\begin{equation}
A_a A_b = \sum_{\ell \in \mathcal{D}}  p_{ab}^\ell A_\ell.
\end{equation}
The $m$-distance matrices are symmetric $A_\ell= A_\ell^t$ since the $m$-distance $d_m$ is. Because $G$ is connected and $\leq$ is a well-ordering, every pair of vertices has one and only one finite $m$-distance in $\mathcal{D}$, thus we further have
\begin{equation}
 \sum_{\ell \in \mathcal{D}} A_\ell=J,
\end{equation}
where $J$ is the matrix of ones. 
\end{proof}

\begin{corollary}\label{newco1} Let $x$ and $y$ be vertices in an $m$-distance-regular graph $G = (X,\Gamma)$. The number of walks of type $c = (c_1,c_2,\dots, c_L)$ between $x$ and $y$ is the same for all types $c'$ obtained as a permutation of $c$.
\end{corollary}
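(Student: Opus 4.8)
The plan is to express the walk-counts as entries of products of matrices taken from the association scheme and then exploit that these matrices commute. For each colour $i\in\{1,\dots,m\}$, let $B_i$ be the adjacency matrix of the spanning subgraph $(X,\Gamma_i)$, so that $(B_i)_{xy}=1$ exactly when $(x,y)\in\Gamma_i$. A routine induction on $L$ (peeling off the first edge of a walk) shows that the number of walks of type $c=(c_1,\dots,c_L)$ between $x$ and $y$ equals $\big(B_{c_1}B_{c_2}\cdots B_{c_L}\big)_{xy}$. Since every permutation of a tuple is obtained by composing transpositions of adjacent entries, it then suffices to prove that $B_iB_j=B_jB_i$ for all $i,j$; and for this I would show that each $B_i$ lies in the Bose--Mesner algebra $\mathcal{A}=\lspan\{A_\ell : \ell\in\mathcal{D}\}$ of the scheme $\mathcal{Z}$ of $m$-distance matrices, which is commutative by the preceding Lemma.

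To obtain $B_i\in\mathcal{A}$, I would check that the relation ``$(x,y)\in\Gamma_i$'' is determined by $d_m(x,y)$ alone; in fact one expects $B_i=A_{e_i}$. One inclusion is immediate from the definition of $m$-length: if $d_m(x,y)=e_i$, then some walk $\xi$ between $x$ and $y$ satisfies $\ell_m(\xi)=e_i$, and since the coordinates of $e_i$ sum to $1$ this walk is a single edge, which must have colour $i$; hence $(x,y)\in\Gamma_i$. For the converse, observe that if $(x,y)\in\Gamma_i$ then the edge itself is a walk of $m$-length $e_i$, so $d_m(x,y)\le e_i$; what remains is to rule out $d_m(x,y)<e_i$, i.e.\ to show that an $m$-distance-regular graph cannot contain a colour-$i$ edge whose endpoints are joined by a strictly shorter walk. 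This is where I expect to have to use the full strength of the hypotheses: that $e_i\in\mathcal{D}$ (so some pair of vertices genuinely realises the $m$-distance $e_i$) together with the constancy of the intersection numbers $p_{ab}^{c}$ from Definition \ref{mdrg}.

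Granting $B_i=A_{e_i}$, the Lemma gives $B_iB_j=A_{e_i}A_{e_j}=A_{e_j}A_{e_i}=B_jB_i$, whence $B_{c_1}\cdots B_{c_L}=B_{c'_1}\cdots B_{c'_L}$ for every permutation $c'$ of $c$; reading off the $(x,y)$-entry completes the proof. The main obstacle is exactly the step in the middle paragraph: establishing that no colour-$i$ edge of an $m$-distance-regular graph can have its endpoints at $m$-distance strictly below $e_i$, i.e.\ the identity $B_i=A_{e_i}$. Everything else — the product formula for walk counts, the reduction to adjacent transpositions, and the deduction from commutativity of $\mathcal{A}$ — is routine bookkeeping.
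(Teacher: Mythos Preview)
Your plan is exactly the paper's argument: express the walk count as the $(x,y)$-entry of a product of the colour-adjacency matrices, then permute the factors using the commutativity of the Bose--Mesner algebra. The paper writes this directly as
\[
N_c=\big(A_{e_{c_1}}A_{e_{c_2}}\cdots A_{e_{c_L}}\big)_{xy}
=\big(A_{e_{\pi(c_1)}}A_{e_{\pi(c_2)}}\cdots A_{e_{\pi(c_L)}}\big)_{xy}=N_{\pi(c)},
\]
and the Remark immediately following stresses that this uses $e_i\in\mathcal{D}$ so that each $A_{e_i}$ sits inside the (commutative) scheme.

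The step you isolate as the ``main obstacle'' --- that the colour-$i$ adjacency matrix $B_i$ coincides with the $m$-distance matrix $A_{e_i}$, equivalently that every colour-$i$ edge has endpoints at $m$-distance exactly $e_i$ --- is used by the paper as well, but tacitly: the very equality $N_c=(A_{e_{c_1}}\cdots A_{e_{c_L}})_{xy}$ already presupposes $B_i=A_{e_i}$ (take $L=1$). The paper offers no separate justification for this identification; it is simply absorbed into the phrase ``it is straightforward\ldots''. So your proposal is neither weaker nor stronger than the paper's proof: you have made explicit the assumption the paper leaves implicit, and otherwise the two arguments coincide. If you want to close this gap you will indeed have to argue, from $e_i\in\mathcal{D}$ together with the constancy of the $p_{ab}^{c}$, that no colour-$i$ edge can join vertices at $m$-distance strictly below $e_i$; the paper does not do this for you.
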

\begin{proof}
This follows from the commutativity of the matrices in a symmetric association scheme. Let $N_c$ be the number of walks of type $c=(c_1,\ldots, c_L) \in \{1,2,\dots,m\}^L$ between $x$ and $y$ and let $\pi$ be a permutation in the symmetric group $S_L$. It is straightforward using $A_{e_i}A_{e_j} = A_{e_j}A_{e_i}$ to check that
\begin{equation}
\begin{split}
N_c = \big( A_{e_{c_1}}A_{e_{c_2}}\dots A_{e_{c_L}}\big)_{xy}
=\big(A_{e_{\pi(c_1)}}A_{e_{\pi(c_2)}}\dots A_{e_{\pi(c_L)}}\big)_{xy} = N_{\pi(c)}.
\end{split}
\end{equation}
\end{proof}
\begin{remark}
    Note that in the proof of Corollary \ref{newco1}, it is important that the matrices $A_{e_i}$ belong to the set of $m$-distance matrices of the graph $G$, which justifies the requirement that the elements $e_1,\dots,e_m$ belong to the set of $m$-distances $\mathcal{D}$ in Definition \ref{mdrg}.
\end{remark}
Let $G$ be an $m$-distance regular graph, and assume $d_m(x,y)=a$. 
Note it is possible that for some $b\leq a$, there are no vertex pairs in $G$ which are at $m$-distance $b$. But there is more we can say if $b$ satisfies further conditions. 
\begin{lemma}\label{newco2}
Let $G = (X, \Gamma)$ be an $m$-distance regular graph and $x$ and $y$ be vertices at $m$-distance $a=(a_1,\ldots,a_m)$ in $G$. Then for any $b=(b_1,\ldots, b_m)\in \mathbb{N}^m$ such that $b_i\leq a_i$ for all $i=1,\ldots, m$, there exists a vertex $z \in X$ such that $d_m(x,z)=b$ and $d_m(y,z)=a-b$.
\end{lemma}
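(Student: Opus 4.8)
The plan is to construct $z$ by cutting a length-minimal walk from $x$ to $y$ into two pieces. First I would use Corollary~\ref{newco1}: the walk that realizes $d_m(x,y)=a$ has some type $c\in\{1,\dots,m\}^{L}$ with $L=a_1+\dots+a_m$ containing exactly $a_i$ edges of colour $i$, so at least one walk of type $c$ exists. Let $c'$ be the type that first lists $b_1$ edges of colour $1$, then $b_2$ of colour $2$, \dots, then $b_m$ of colour $m$, and afterwards $a_1-b_1$ edges of colour $1$, \dots, $a_m-b_m$ of colour $m$; since $b_i\le a_i$ for all $i$, the sequence $c'$ is a permutation of $c$, so Corollary~\ref{newco1} yields a walk $\xi'$ of type $c'$ between $x$ and $y$. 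Let $z$ be the vertex of $\xi'$ reached after its first $b_1+\dots+b_m$ edges. Then the initial segment $\xi_1'$ of $\xi'$ is a walk from $x$ to $z$ with $\ell_m(\xi_1')=b$, and the final segment $\xi_2'$ is a walk from $z$ to $y$ with $\ell_m(\xi_2')=a-b$; in particular $d_m(x,z)\le b$ and $d_m(z,y)\le a-b$ with respect to the monomial order. (When $b=o$ take $z=x$, and when $b=a$ take $z=y$.)

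Next I would promote these inequalities to equalities. The one fact needed is that $+$ is \emph{strictly} monotone for the monomial order, i.e.\ $u<v\Rightarrow w+u<w+v$ on $\mathbb{N}^m$, which follows from property~(ii) in Definition~\ref{defmo} together with cancellation in $\mathbb{N}^m$. Suppose $d_m(x,z)<b$. Since $\le$ is a well-ordering the minimum in the definition of $d_m$ is attained, so there is a walk $\nu$ from $x$ to $z$ with $\ell_m(\nu)=d_m(x,z)$; concatenating $\nu$ with $\xi_2'$ gives a walk from $x$ to $y$ of $m$-length $d_m(x,z)+(a-b)<b+(a-b)=a$, contradicting $d_m(x,y)=a$. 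Hence $d_m(x,z)=b$. Symmetrically, if $d_m(z,y)<a-b$, concatenating $\xi_1'$ with a length-minimal walk from $z$ to $y$ gives a walk from $x$ to $y$ of $m$-length $b+d_m(z,y)<b+(a-b)=a$, again a contradiction; hence $d_m(z,y)=a-b$. This completes the construction.

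I expect the only real obstacle to be the first step, the recolouring of a minimal walk into the specific order that separates the ``$b$-part'' from the ``$(a-b)$-part'': this is exactly what Corollary~\ref{newco1} delivers, and it rests on the commutativity of the matrices $A_{e_i}$, which is why the requirement $e_1,\dots,e_m\in\mathcal{D}$ in the definition of $m$-distance-regular graph is used here (cf.\ the remark following Corollary~\ref{newco1}). The remainder is routine manipulation of the monomial order, using only that $o\le b$ for every $b\in\mathbb{N}^m$ and the strict monotonicity of addition. For $m=1$ the statement specializes to the classical fact that a geodesic in a distance-regular graph can be cut at any prescribed intermediate distance.
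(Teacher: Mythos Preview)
Your proposal is correct and follows essentially the same route as the paper: both invoke Corollary~\ref{newco1} to obtain a walk of $m$-length $a$ that splits as $\xi_1\cdot\xi_2$ with $\ell_m(\xi_1)=b$ and $\ell_m(\xi_2)=a-b$, and both upgrade $d_m(x,z)\le b$, $d_m(z,y)\le a-b$ to equalities by concatenating a shorter walk and contradicting $d_m(x,y)=a$. You are simply more explicit than the paper about the permutation used and about why addition is strictly monotone for the monomial order.
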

\begin{proof}
Assume $d_m(x,y)=a$ and $b \in \NN^m$ such that $b_i \leq a_i$. 
By Corollary \ref{newco1}, there exists a walk $\xi$ between $x$ and $y$ of $m$-length $a$ which decomposes as
\begin{equation}
\xi = \xi_1 \cdot \xi_2,
\end{equation}
where $\xi_1$ is a walk between $x$ and an intermediate vertex $z$ of $m$-length $\ell_m(\xi_1) = b$ and $\xi_2$ is a walk between $z$ and $y$ of $m$-length $\ell_m(\xi_2) = a - b$.

We claim that $d_m(x,z)=b$ and $d_m(z,y)= a -b$. First, we know that $d_m(x,z)\leq b$ and $d_m(z,y)\leq a - b$ since these $m$-distances are bounded above by the $m$-lengths of $\xi_1$ and $\xi_2$, respectively. Next, assume that $d_m(x,z) <  b$. Then there exists a walk $\xi_3$ between $x$ and $z$ of $m$-length $\ell_m(\xi_3) < b$. But then the walk $\xi_3 \cdot \xi_2$ between $x$ and $y$ is of $m$-length  $\ell_m(\xi_3 \cdot \xi_2) < a$,  which contradicts the initial assumption that $d_m(x,y) = a$. A similar contradiction is obtained if we assume $d_m(z,y) < a -b$, which completes the proof.
\end{proof}

\begin{corollary} Let $G$ be an $m$-distance regular graph and let $x,y$ be two vertices at $m$-distance $a$. If $z$ is a vertex on a walk $\xi$ of $m$-length $\ell_m(\xi)=a$ between $x$ and $y$, then $d_m(x,z)=b$ and $d_m(y,z)=a-b$ for some $b\in \mathbb{N}^m$ with $b_i\leq a_i$ for all $i = 1,2,\dots, m$. 
\end{corollary}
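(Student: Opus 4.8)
The plan is to reuse the second half of the proof of Lemma \ref{newco2}, feeding it the walk $\xi$ that is now given to us rather than one produced through Corollary \ref{newco1}. First I would fix one occurrence of $z$ among the vertices of $\xi$ and split the walk at that point, writing $\xi = \xi_1 \cdot \xi_2$ where $\xi_1$ is a walk from $x$ to $z$ and $\xi_2$ is a walk from $z$ to $y$. Because the $m$-length of a walk only records, colour by colour, how many of its edges carry each colour, and the edge sequence of $\xi$ is the concatenation of those of $\xi_1$ and $\xi_2$, one has $\ell_m(\xi_1) + \ell_m(\xi_2) = \ell_m(\xi) = a$. Setting $b := \ell_m(\xi_1) \in \mathbb{N}^m$ then gives $\ell_m(\xi_2) = a - b$, and since both $b$ and $a - b$ lie in $\mathbb{N}^m$ we automatically get $0 \leq b_i \leq a_i$ for every $i$.

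Next I would show that these two $m$-lengths are the required $m$-distances. Minimality of $d_m$ already gives $d_m(x,z) \leq b$ and $d_m(z,y) \leq a - b$. For the reverse, suppose $d_m(x,z) < b$ in the monomial order; then some walk $\xi_3$ from $x$ to $z$ has $\ell_m(\xi_3) < b$, and $\xi_3 \cdot \xi_2$ is a walk from $x$ to $y$ of $m$-length $\ell_m(\xi_3) + (a - b)$, which by translation-invariance of the monomial order (axiom (ii) of Definition \ref{defmo}, which also preserves strict inequalities) is $< b + (a - b) = a$, contradicting $d_m(x,y) = a$. The hypothesis $d_m(z,y) < a - b$ yields the same contradiction using $\xi_1 \cdot \xi_3'$ for a short walk $\xi_3'$ from $z$ to $y$. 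Hence $d_m(x,z) = b$ and $d_m(z,y) = a - b$, and $d_m(y,z) = d_m(z,y) = a - b$ by symmetry of $d_m$.

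I do not expect a genuine obstacle: the only things that need a word of justification are that $m$-length is additive under concatenation and that axiom (ii) of a monomial order upgrades to strict inequalities, both of which are immediate. I would also point out, as the argument makes clear, that this corollary does not use $m$-distance-regularity at all — it holds for any connected edge-coloured graph with a chosen monomial order — and that it is stated as a corollary simply because it is the natural companion to Lemma \ref{newco2}; there, by contrast, producing a walk of $m$-length $a$ that splits at a prescribed $b$ is precisely the step that relied (through Corollary \ref{newco1}) on regularity.
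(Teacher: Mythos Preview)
Your proposal is correct and follows exactly the approach the paper intends: the paper's proof is the single line ``It follows from the proof of Lemma \ref{newco2},'' and you have faithfully unpacked that reference by splitting $\xi$ at $z$, setting $b=\ell_m(\xi_1)$, and rerunning the contradiction argument from that lemma. Your side remark that the argument does not actually use $m$-distance-regularity is also accurate.
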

\begin{proof}
It follows from the proof of Lemma \ref{newco2}.
\end{proof}

Like distance-regular graphs,  certain intersection numbers of an $m$-distance-regular graph must be zero. 
\begin{lemma}\label{lem1} The intersection numbers of the symmetric association scheme associated to an $m$-distance-regular graph satisfy
\begin{equation}
p_{ab}^c \neq 0 \quad \Rightarrow \quad a \leq b + c, \ b \leq a + c, \ c \leq a + b.
\end{equation}
\end{lemma}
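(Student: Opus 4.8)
The plan is to prove the three implications by a symmetric argument, so it suffices to establish one of them, say $p_{ab}^c \neq 0 \Rightarrow a \leq b + c$; the other two follow by relabeling the roles of the three vertices. Recall that $p_{ab}^c \neq 0$ means there exist vertices $x,y,z$ in $G$ with $d_m(x,y) = c$, $d_m(x,z) = a$ and $d_m(z,y) = b$. The idea is to build a walk from $x$ to $z$ whose $m$-length is $\leq b+c$ and then invoke the definition of $m$-distance together with property (ii) of a monomial order.

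First I would pick a geodesic walk $\xi_1$ from $x$ to $y$ realizing $d_m(x,y)=c$, so $\ell_m(\xi_1) = c$, and a geodesic walk $\xi_2$ from $y$ to $z$ realizing $d_m(y,z) = b$, so $\ell_m(\xi_2) = b$ (using symmetry of $d_m$, which was established in the excerpt). Concatenating, $\xi_1 \cdot \xi_2$ is a walk from $x$ to $z$ with $m$-length $\ell_m(\xi_1 \cdot \xi_2) = c + b$, simply because the coordinate-wise count of edges of each color is additive under concatenation of walks. Hence $d_m(x,z) = \min_{\leq}\{\ell_m(\xi) : \xi \text{ a walk from } x \text{ to } z\} \leq b+c$ in the monomial order, which is exactly $a \leq b+c$.

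For the second implication $b \leq a+c$, I would run the same argument with the concatenation of a geodesic from $y$ to $x$ and a geodesic from $x$ to $z$; for the third, $c \leq a+b$, concatenate a geodesic from $x$ to $z$ with one from $z$ to $y$. In each case the key facts used are: (1) $d_m$ is symmetric, so geodesics can be traversed in either direction; (2) $m$-length is additive under concatenation; and (3) $d_m$ is defined as the $\leq$-minimum over all walks, so any particular walk's $m$-length is an upper bound for $d_m$ in the order $\leq$. Note that we do not even need property (ii) of the monomial order (translation invariance) here — only that $d_m$ is a genuine $\leq$-minimum; the additivity of $m$-length is a combinatorial fact about walks, not about the order.

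I do not anticipate a serious obstacle: the statement is essentially a triangle inequality for $d_m$, reflecting the fact that $\leq$-minima of additive walk-lengths behave like a metric with values in the ordered monoid $(\mathbb{N}^m, +, \leq)$. The only point to handle carefully is the degenerate case where one of $a,b,c$ is the zero vector $o$ — but then the corresponding pair of vertices coincides, the claimed inequalities become trivialities (e.g. if $c = o$ then $x=y$ and $a=b$, so $a \leq b+c$ and $b \leq a+c$ are equalities, while $c=o \leq a+b$ holds since $o$ is the $\leq$-minimum of $\mathbb{N}^m$), so no separate treatment is really needed. One could alternatively phrase the whole proof at the level of matrices: $p_{ab}^c \neq 0$ forces $A_c$ to appear in the expansion of $A_a A_b$, but working directly with walks and the definition of $d_m$ is the cleanest route.
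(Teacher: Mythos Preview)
Your proof is correct and is essentially the same argument as the paper's: starting from vertices $x,y,z$ with the prescribed $m$-distances, concatenate two geodesic walks to obtain a walk whose $m$-length bounds the third $m$-distance from above, then invoke the definition of $d_m$ as a $\leq$-minimum. The only cosmetic difference is that the paper first derives $c\leq a+b$ (via a walk from $x$ to $y$ through $z$) and then says the other two inequalities follow symmetrically, whereas you start with $a\leq b+c$.
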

\begin{proof}
$p_{ab}^c\neq 0$ if and only if there exist three vertices $x,y,z\in X$ such that $d_m(x,y)=c$,  $d_m(x,z)=a$ and $d_m(z,y)=b$. Therefore there is a walk $\xi$ between $x$ and $y$ of $m$-length $\ell_m(\xi) = a+b$. By the definition of $m$-distance, we know that $d_m(x,y) \leq \ell_m(\xi) $ and thus $c  \leq a + b$. A similar argument about $y$ and $z$ or $x$ and $z$ shows the two other inequalities. 
\end{proof}

The previous lemma imposes important conditions on the possible non-zero intersection numbers of the association scheme associated to an $m$-distance-regular graph. We can also show that certain intersection numbers cannot be zero.

\begin{lemma}\label{lem2} Let $p_{ab}^c$ be the intersection numbers of the  symmetric association scheme associated to an $m$-distance-regular graph. If $a,b,c \in \mathcal{D}$, the set of $m$-distances of $G$, then $p_{ab}^c \neq 0$ if one of $a,b,c$ is equal to the sum of the other two.
\end{lemma}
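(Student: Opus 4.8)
The plan is to derive all three cases from Lemma \ref{newco2}, which, given any pair of vertices realizing an $m$-distance, produces an intermediate vertex splitting that $m$-distance into two prescribed pieces. I would handle the case $c = a+b$ directly, and then reduce the cases $a = b+c$ and $b = a+c$ to the same lemma, using that the $m$-distance is symmetric and that the $m$-distance matrices form a commutative association scheme (so $p_{ab}^c = p_{ba}^c$).

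\textbf{Case $c = a+b$.} Since $c \in \mathcal{D}$, there exist vertices $x,y$ with $d_m(x,y) = c$. Because $b \in \mathbb{N}^m$, we have $a_i \le a_i + b_i = c_i$ for every $i$, so Lemma \ref{newco2}, applied to the pair $x,y$ at $m$-distance $c$ with threshold vector $a$, yields a vertex $z$ with $d_m(x,z) = a$ and $d_m(y,z) = c - a = b$. This single $z$ already witnesses $p_{ab}^c \ge 1$, hence $p_{ab}^c \ne 0$.

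\textbf{Case $a = b+c$ (and symmetrically $b = a+c$).} Here $a \in \mathcal{D}$, so one may pick vertices $x',y'$ with $d_m(x',y') = a$; since $c_i \le a_i$ for all $i$, Lemma \ref{newco2} gives a vertex $z'$ with $d_m(x',z') = c$ and $d_m(y',z') = a - c = b$. Relabeling $x := x'$, $y := z'$, $z := y'$, one obtains $d_m(x,y) = c$, $d_m(x,z) = a$, and $d_m(z,y) = b$ (the last two using the symmetry of $d_m$), so again $p_{ab}^c \ge 1$. The case $b = a+c$ is identical after interchanging the roles of $a$ and $b$ and invoking $p_{ab}^c = p_{ba}^c$.

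I expect no genuine obstacle here. The only points that require a little care are: (i) noting that the hypothesis ``one of $a,b,c$ is the sum of the other two'' forces that one to dominate the other two componentwise, which is precisely what makes Lemma \ref{newco2} applicable; and (ii) keeping the bookkeeping straight for which vertex plays the role of $x$, $y$, and $z$ in the two cases where the dominating $m$-distance is $a$ or $b$ rather than $c$.
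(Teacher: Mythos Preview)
Your proof is correct and follows essentially the same approach as the paper: both arguments reduce to Lemma~\ref{newco2}, producing a vertex triple that witnesses the desired nonzero intersection number. The only cosmetic difference is that the paper treats the single case $c=a+b$, obtains a triple $\{x,y,z\}$, and then reads off all three relations $p_{ab}^c,\ p_{ac}^b,\ p_{bc}^a \neq 0$ at once from that same triple, whereas you apply Lemma~\ref{newco2} separately in each case and relabel; the content is the same.
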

\begin{proof}
If $a+b=c$, then there exist two vertices $x$ and $y$ such that $d_m(x,y)=c$. By Lemma \ref{newco2}, there exists a vertex $z$ such that $d_m(x,z)=a$ and $d_m(y,z)=b$. The distances between the triple $\{x,y,z\}$ imply that $p_{ab}^{c}\neq 0$, $p_{ac}^{b}\neq 0$, and $p_{bc}^{a}\neq 0$.
\end{proof}

We have now gathered the necessary findings to prove our main result which provides a correspondence between the $m$-distance-regular property of graphs and the $m$-variate $P$-polynomial property of schemes. The theorem states that a connected undirected graph $G = (X,\Gamma)$ is $m$-distance-regular if and only if there exists an $m$-variate $P$-polynomial association scheme $\mathcal{Z} = \{A_n | \ n \in \mathcal{D}\}$ such that the adjacency matrix of $G$ is $A = A_{e_1} + A_{e_2} + \dots + A_{e_m}$. 

\begin{theorem}\label{realdeal}
Let $\mathcal{Z} = \{ A_n|\ n \in \mathcal{D}\}$ be a symmetric association scheme on a set $X$ with domain $\mathcal{D} \subset \mathbb{N}^m$ containing $e_1,e_2, \dots, e_m$. Let $\leq$ be a monomial order on $\mathbb{N}^m$. For $i = 1,2,\dots,m$, let $\Gamma_i\subseteq X\times X$ be defined by
\begin{equation} \label{eq:edgeai}
(x,y) \in \Gamma_i \quad \iff \quad (A_{e_i})_{xy} = 1.
\end{equation}
The following statements are equivalent:
\begin{enumerate}
\item[(i)]  The graph $G = (X,\Gamma = \Gamma_1 \sqcup \dots \sqcup  \Gamma_m )$ is $m$-distance-regular with respect to the partition $\{\Gamma_i\ | \ i = 1, 2, \dots, m\}$ of its edges and the monomial order $\leq$. The subset $\mathcal{D}$ is the set of all $m$-distances in $G$ and the matrix $A_\ell$ is the $\ell$-th $m$-distance matrix of $G$ for all $\ell \in \mathcal{D}$;
\item[(ii)] $\mathcal{Z}$ is a symmetric $m$-variate $P$-polynomial association scheme on $\mathcal{D}$ with respect to the monomial order $\leq$.
\end{enumerate}
\end{theorem}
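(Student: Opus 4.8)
The plan is to prove the two implications separately, using Proposition~\ref{mprop} as the bridge on the scheme side.

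\textbf{Direction $(i) \Rightarrow (ii)$.} Assume $G$ is $m$-distance-regular with respect to $\{\Gamma_i\}$ and $\leq$, with $\mathcal{D}$ its set of $m$-distances and $A_\ell$ its $m$-distance matrices. By the Lemma following Definition~\ref{mdrg}, $\mathcal{Z} = \{A_\ell \mid \ell \in \mathcal{D}\}$ is already a symmetric association scheme; commutativity follows since the $p_{ab}^c$ are symmetric in $a,b$ (or one can invoke that the scheme is generated by the mutually commuting... actually one must check commutativity, but it follows from $A_aA_b = \sum p_{ab}^\ell A_\ell = \sum p_{ba}^\ell A_\ell = A_bA_a$, using $p_{ab}^c = p_{ba}^c$ which holds because counting vertices $z$ with $d_m(x,z)=a$, $d_m(z,y)=b$ is the same as with $d_m(x,z)=b$, $d_m(z,y)=a$ after swapping $x,y$ and using symmetry of $d_m$). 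I then verify condition~(ii) of Proposition~\ref{mprop}. Condition~(i) of Definition~\ref{defmv} for $\mathcal{D}$: if $a \in \mathcal{D}$ and $0 \le b_i \le a_i$, then by Lemma~\ref{newco2} there is a vertex pair (namely $x$ and the intermediate $z$) at $m$-distance $b$, so $b \in \mathcal{D}$. The intersection-number condition: if $p_{e_i,a}^b \neq 0$ then by Lemma~\ref{lem1}, $b \le a + e_i$; and if $a + e_i \in \mathcal{D}$ then by Lemma~\ref{lem2} (with the triple summing as $a + e_i = a + e_i$), $p_{e_i,a}^{a+e_i} \neq 0$. Hence by Proposition~\ref{mprop}, $\mathcal{Z}$ is $m$-variate $P$-polynomial on $\mathcal{D}$ with respect to $\leq$; it is symmetric since all $A_\ell$ are. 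Finally, the $\Gamma_i$ reconstructed from $A_{e_i}$ via \eqref{eq:edgeai} are exactly the original colour classes, since $(A_{e_i})_{xy} = 1 \iff d_m(x,y) = e_i$, and one must check that an edge of colour $i$ has $d_m$ equal to $e_i$ — this uses that $e_i \in \mathcal{D}$ together with the minimality definition of $d_m$: the single-edge walk has $m$-length $e_i$, and nothing in $\mathcal{D}$ is strictly below $e_i$ except possibly vectors not realized; care is needed here (see obstacle below).

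\textbf{Direction $(ii) \Rightarrow (i)$.} Assume $\mathcal{Z} = \{A_n \mid n \in \mathcal{D}\}$ is a symmetric $m$-variate $P$-polynomial association scheme on $\mathcal{D}$ with respect to $\leq$, and define $\Gamma_i$ by \eqref{eq:edgeai}. Set $G = (X, \Gamma_1 \sqcup \cdots \sqcup \Gamma_m)$; this is connected because the adjacency algebra is generated by $A_{e_1}, \dots, A_{e_m}$ (every $A_n$ is a polynomial in them, so some power of $I + \sum A_{e_i}$ has all entries positive). The key claim is that for $x, y \in X$, the graph-theoretic $m$-distance $d_m(x,y)$ (with respect to $\leq$) equals the unique $n \in \mathcal{D}$ with $(A_n)_{xy} = 1$. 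To see this, note $N_c := (A_{e_{c_1}} \cdots A_{e_{c_L}})_{xy}$ counts walks of type $c$ from $x$ to $y$, and expanding the product using Definition~\ref{defmv}(iii) repeatedly, $A_{e_{c_1}} \cdots A_{e_{c_L}}$ is a linear combination of $\{A_{e_1}^{b_1} \cdots A_{e_m}^{b_m} \mid b \le \ell_m(c),\ b \in \mathcal{D}\}$, and each such monomial is in turn (via Definition~\ref{defmv}(ii) inverted, i.e.\ the $v_n$ are triangular) a combination of $A_n$ with $n \le \ell_m(c)$. Hence a walk of $m$-length $\ell$ from $x$ to $y$ forces $(A_n)_{xy} \neq 0$ for some $n \le \ell$; so $d_m(x,y) \ge$ (the $n$ with $(A_n)_{xy}=1$)? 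One must be careful with the direction of the inequality. Conversely, if $(A_n)_{xy} = 1$ then, writing $A_n = v_n(A_{e_1}, \dots, A_{e_m})$ with leading monomial $\x^n$, one shows there is a walk of $m$-length exactly $n$ from $x$ to $y$: the coefficient of $A_n$ in the monomial $A_{e_1}^{n_1} \cdots A_{e_m}^{n_m}$ is nonzero (triangularity of the $v$'s and leading coefficient nonzero), and that monomial's $(x,y)$-entry counts walks of $m$-length $n$, forcing at least one. Combining, $d_m(x,y) = n$. With this identification, $\mathcal{D}$ is the set of $m$-distances and $A_n$ is the $n$-th $m$-distance matrix; $e_1, \dots, e_m \in \mathcal{D}$ by hypothesis; and the intersection numbers $p_{ab}^c$ of the scheme count exactly the vertices $z$ with $d_m(x,z) = a$, $d_m(z,y) = b$ when $d_m(x,y) = c$, independent of $x,y$, so $G$ is $m$-distance-regular.

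\textbf{Main obstacle.} The delicate point in both directions is the precise relationship between $d_m$ (defined by minimizing $\ell_m$ over walks) and the label $n$ with $(A_n)_{xy} = 1$, and in particular making the triangularity argument in $(ii) \Rightarrow (i)$ rigorous: one needs that expanding a product $A_{e_{c_1}} \cdots A_{e_{c_L}}$ never produces an $A_n$ with $n \not\le \ell_m(c)$ (controlled by condition (iii) applied inductively, together with condition (i) keeping all intermediate multidegrees in $\mathcal{D}$), and that the "diagonal" term $A_{\ell_m(c)}$ genuinely appears with nonzero coefficient when $\ell_m(c) \in \mathcal{D}$ (controlled by the "moreover" part of Proposition~\ref{mprop}(ii), i.e.\ $p_{e_i,a}^{a+e_i} \neq 0$). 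Getting the inequalities to point the right way, and handling walks whose $m$-length is not itself in $\mathcal{D}$ (so that $d_m$ is strictly smaller), is where the care is concentrated; the rest is bookkeeping with the already-established Lemmas~\ref{newco1}--\ref{lem2} and Proposition~\ref{mprop}.
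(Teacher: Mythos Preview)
Your proposal is correct and follows essentially the same route as the paper: $(i)\Rightarrow(ii)$ via Lemmas~\ref{newco2}, \ref{lem1}, \ref{lem2} and Proposition~\ref{mprop}, and $(ii)\Rightarrow(i)$ by proving that $A_n$ is the $n$-th $m$-distance matrix through the span/triangularity argument, with the delicate case $\ell_m(c)\notin\mathcal{D}$ handled by iterating condition~(iii) of Definition~\ref{defmv}---exactly the obstacle you flag and exactly what the paper works through in detail. One unnecessary worry: your concern in $(i)\Rightarrow(ii)$ about whether an edge of colour $i$ has $d_m=e_i$ is already built into hypothesis~(i), which stipulates that $A_{e_i}$ is the $e_i$-th $m$-distance matrix.
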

\begin{proof}
$(i) \Rightarrow (ii)$. We show that an $m$-distance-regular graph $G$ leads to a symmetric $m$-variate $P$-polynomial scheme. Using Lemma \ref{newco2}, we find that condition (i) of Definition \ref{defmv} holds for the set $\mathcal{D}$ of $m$-distances in $G$. 
Lemmas \ref{lem1} and \ref{lem2} imply that the intersection numbers $p_{ab}^c$ of the scheme associated to the $m$-distance-regular graph $G$ satisfy the conditions in (ii) of Proposition \ref{mprop}. Therefore the $m$-distance matrices of an $m$-distance-regular graph form an $m$-variate $P$-polynomial association scheme.\\

$(ii) \Rightarrow (i)$. Assume $\mathcal{Z} = \{ A_n|\ n \in \mathcal{D}\}$ is a symmetric $m$-variate $P$-polynomial association scheme. 
Let $G$ be the graph whose adjacency matrix is 
\begin{equation}
A = A_{e_1} + A_{e_2} +\dots +A_{e_m}. 
\end{equation}
We prove that $G=(X, \Gamma)$ is an $m$-distance regular graph with respect to the partition $\Gamma = \Gamma_1 \sqcup \Gamma_2 \sqcup \dots \sqcup \Gamma_m$, where $\Gamma_i$'s are as in \eqref{eq:edgeai}. 

$G$ is undirected since the matrices $A_{e_i}$ are symmetric. 
We now show that for each $\ell\in \mathcal{D}$, the matrix $A_\ell \in \mathcal{Z}$ is in fact the $\ell$-th $m$-distance matrix of $G$.  Let $x$ and $y$ be two vertices in $X$ such that $(A_\ell)_{xy} = 1$. Using the following property which holds for matrices in $m$-variate $P$-polynomial schemes and $b \in \mathcal{D}$ (as proved in \cite{bannai2023multivariate}, see also Lemma \ref{lem:span} in Appendix \ref{sec:appendix}),
\begin{equation}\label{vseq}
\text{span}\{A_a \ | \ a \leq b, \ a \in \mathcal{D} \} = \text{span}\Big\{\prod_{i = 1}^m A_{e_i}^{a_i} \ | \ a \leq b,\ a \in \mathcal{D} \Big\},
\end{equation}
we can show that $d_m(x,y) \leq \ell $. Indeed, equation \eqref{vseq} implies the existence of coefficients $f_a$ such that
\begin{equation}
1 = (A_\ell)_{xy}= \sum_{\substack{a \in \mathcal{D}\\ a \leq \ell}} f_a \Big(\prod_{i = 1}^m A_{e_i}^{a_i}\Big)_{xy}.
\end{equation}
Thus there exists $a \in \mathcal{D}$ such that $a \leq \ell$ and $\left(\prod_{i = 1}^m A_{e_i}^{a_i}\right)_{xy} \neq 0$. That is, there is a walk of $m$-length $a \leq \ell$ between $x$ and $y$. By the definition of $m$-distance, $d_m(x,y) \leq \ell$. 

Now we show $d_m(x,y) =\ell$ by contradiction. Assume $d_m(x,y) = a < \ell$. Then there exists a walk $\xi$ of $m$-length $\ell_m(\xi) = a$ between $x$ and $y$ and hence $\big(\prod_{i = 1}^m A_{e_i}^{a_i}\big)_{xy}\neq 0$. There are two cases to consider. If $a \in \mathcal{D}$, we can use equation \eqref{vseq} again and find coefficients $f'_b$ such that
\begin{equation}
0 \neq \big(\prod_{i = 1}^m A_{e_i}^{a_i}\big)_{xy} = \big( \sum_{\substack{b \in \mathcal{D} \\ b \leq a}}f'_b A_b \big)_{xy}.
\end{equation}
This implies that $(A_b)_{xy} =1 $ for some $b \in \mathcal{D}$ with $b \leq a < \ell$. It is a contradiction since the matrix $A_\ell$ is the unique matrix in the scheme whose $xy$ entry is non-zero.  If $a \notin \mathcal{D}$, there exist $c \in \mathcal{D}$ and $c' \in \mathbb{N}^m \backslash \{o\}$ whose sum is $a = c + c'$. For $e_k$  such that $c' - e_k \in \mathbb{N}^m$, the condition (iii) of Definition \ref{defmv} gives the existence of coefficients $g_b$ such that
\begin{equation}\label{ex1}
\begin{split}
 \prod_{i = 1}^m A_{e_i}^{a_i}  &=  \prod_{i = 1}^m A_{e_i}^{c'_i} \prod_{j = 1}^m A_{e_j}^{c_j} \\
 &=  \Big(\prod_{i = 1}^m A_{e_i}^{(c' - e_k)_i}  \Big)\Big(A_{e_k}\prod_{j = 1}^m A_{e_j}^{c_j }\Big)\\
 &= \Big(\prod_{i = 1}^m A_{e_i}^{(c' - e_k)_i}  \Big)\Big(\sum_{\substack{b \in \mathcal{D} \\ b \leq c + e_k}}g_b \prod_{j = 1}^m A_{e_j}^{b_j} \Big)\\
 & = \sum_{\substack{b \in \mathcal{D} \\ b \leq c + e_k}}g_b \prod_{i = 1}^m A_{e_i}^{(c' - e_k)_i } \prod_{j = 1}^m A_{e_j}^{b_j}.
 \end{split}
\end{equation}
If $c' - e_k \neq o$, we can find again $e_{k'}$ such that $c' - e_k - e_{k'}\in \mathbb{N}^m$. One then observes that the argument based on condition (iii) and illustrated in \eqref{ex1} can be applied again for each term in the sum of the last line. Using this approach iteratively, one eventually finds coefficients $g'_b$ such that
\begin{equation}\label{eqre2}
\begin{split}
 \prod_{i = 1}^m A_{e_i}^{a_i}  &=  \sum_{\substack{b \in \mathcal{D}\\
 b \leq c + c'}} g'_b\prod_{j = 1}^m A_{e_j}^{b_j}.
 \end{split}
\end{equation}
From equation \eqref{eqre2} and $(\prod_{i = 1}^m A_{e_i}^{a_i})_{xy} \neq 0$, we find that there is $b \in \mathcal{D}$ with $b < c + c'  = a$ such that $(\prod_{j = 1}^m A_{e_j}^{b_j })_{xy} \neq 0$. The strict inequality between $b$ and $a$ comes from the fact that $b \in \mathcal{D}$ but $a \notin \mathcal{D}$. Hence, there exists a walk of $m$-length $b < a$ between $x$ and $y$, which contradicts $d_m(x,y) = a$. Therefore $d_m(x,y)=\ell$, and $A_\ell\in\mathcal{Z}$ is the $\ell$-th $m$-distance matrix of $G$. In particular, the elements $e_1,e_2, \dots, e_m$ belong to the set of $m$-distances of $G$.

Note that the connectivity of $G$ also follows from the previous argument. Indeed, for any pair of vertices $x$ and $y$ there is a finite $m$-distance $\ell \in \mathcal{D}$ such that $(A_\ell)_{xy} = 1$ and thus $d_m(x,y) = \ell$. Therefore, there is a finite walk in $G$ connecting $x$ and $y$.
  
To prove that $G$ is $m$-distance-regular, we can use the Bose-Mesner relations satisfied by the matrices in an association scheme,
\begin{equation}
A_a A_b = \sum_{c \in \mathcal{D}} p_{ab}^c A_c.
\end{equation}
For any $x$ and $y$ at $m$-distance $d_m(x,y) = c$ and any $a,b\in\mathcal{D}$, we find that
\begin{equation}
\begin{split}
&  |\{ z \in X \ | \ d_m(x,z) = a, \ d_m(z,y) = b\}|\\
=&  \sum_{z \in X} (A_a)_{xz} (A_b)_{zy}\\
=& (A_aA_b)_{xy} =\Big(\sum_{\ell\in\mathcal{D}} p_{ab}^\ell A_\ell\Big)_{xy} \\
=&p_{ab}^c .
\end{split}
\end{equation}
This shows that the number $p_{ab}^c$ of vertices $z$ at $m$-distance $a$ from $x$ and at $m$-distance $b$ from $y$ depends only on the $m$-distance $c$ between $x$ and $y$. Therefore $G$ is $m$-distance-regular.
\end{proof}

\begin{corollary}\label{cor:orgen} There exists at most one $m$-variate $P$-polynomial association scheme $\mathcal{Z}$ with order $\leq$ and with fixed  generating matrices  $A_{e_i}$, $i= 1,2\dots, m$.
\end{corollary}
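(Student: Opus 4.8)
The plan is to show that the scheme is completely determined by the generating matrices $A_{e_1},\dots,A_{e_m}$ together with the order $\leq$, by reconstructing every remaining matrix $A_\ell$ from this data alone. The key observation is that Theorem \ref{realdeal} (the direction $(ii)\Rightarrow(i)$) already tells us precisely how: if $\mathcal{Z}=\{A_n\mid n\in\mathcal{D}\}$ is $m$-variate $P$-polynomial with respect to $\leq$, then the graph $G$ with adjacency matrix $A=A_{e_1}+\dots+A_{e_m}$ and edge partition $\Gamma_i$ given by \eqref{eq:edgeai} is $m$-distance-regular, and each $A_\ell$ coincides with the $\ell$-th $m$-distance matrix of $G$. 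But that $m$-distance matrix is defined purely in terms of $G$ and $\leq$: its $(x,y)$ entry is $1$ exactly when $d_m(x,y)=\ell$, and $d_m$ depends only on the edge-colouring of $G$ and the order $\leq$.

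Concretely, I would argue as follows. Suppose $\mathcal{Z}=\{A_n\mid n\in\mathcal{D}\}$ and $\mathcal{Z}'=\{A'_n\mid n\in\mathcal{D}'\}$ are both $m$-variate $P$-polynomial association schemes with respect to the same order $\leq$ and with $A_{e_i}=A'_{e_i}$ for all $i=1,\dots,m$. Since the generating matrices agree, the partitions $\Gamma_i$ defined from $\mathcal{Z}$ via \eqref{eq:edgeai} agree with those defined from $\mathcal{Z}'$, and hence the coloured graph $G=(X,\Gamma_1\sqcup\dots\sqcup\Gamma_m)$ produced by Theorem \ref{realdeal} is the same in both cases. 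The $m$-distance $d_m$ on $G$ (with respect to $\leq$) is therefore the same, so its set $\mathcal{D}''$ of attained $m$-distances and its $m$-distance matrices are determined. By the $(ii)\Rightarrow(i)$ direction of Theorem \ref{realdeal} applied to $\mathcal{Z}$, we get $\mathcal{D}=\mathcal{D}''$ and $A_\ell$ equals the $\ell$-th $m$-distance matrix of $G$ for every $\ell\in\mathcal{D}$; applying the same to $\mathcal{Z}'$ gives $\mathcal{D}'=\mathcal{D}''$ and the same identification for the $A'_\ell$. Hence $\mathcal{D}=\mathcal{D}'$ and $A_\ell=A'_\ell$ for all $\ell$, so $\mathcal{Z}=\mathcal{Z}'$.

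There is essentially no obstacle here: the corollary is a direct packaging of the $(ii)\Rightarrow(i)$ half of Theorem \ref{realdeal}, whose content is exactly that the $P$-polynomial scheme is recoverable as the set of $m$-distance matrices of a canonically associated graph, and that graph is visibly a function of $(A_{e_1},\dots,A_{e_m},\leq)$ only. The only point requiring a moment's care is to note that the index set $\mathcal{D}$ itself is not assumed fixed in advance but is also recovered (as the set of $m$-distances of $G$), which is why the statement says "at most one" rather than needing to match domains beforehand. One could also phrase the whole argument without invoking $G$ explicitly, by inducting on $\leq$ and using condition (iii) of Definition \ref{defmv} to express each $\prod_i A_{e_i}^{a_i}$, then reading off $A_\ell$ as the unique $0$–$1$ matrix supported on the newly appearing entries; but routing through Theorem \ref{realdeal} is cleaner and avoids redoing that bookkeeping.
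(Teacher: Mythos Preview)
Your proposal is correct and follows exactly the paper's approach: invoke Theorem~\ref{realdeal} to identify any such scheme $\mathcal{Z}$ with the set of $m$-distance matrices of the graph $G$ built from $A_{e_1},\dots,A_{e_m}$ and $\leq$, and observe that this graph (and hence its $m$-distance matrices and domain) depends only on that data. The paper's proof is just a terser version of your argument.
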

\begin{proof}
From Theorem \ref{realdeal}, we know that $\mathcal{Z}$ is such an $m$-variate $P$-polynomial association scheme if and only if the graph $G$ with adjacency matrix $A = A_{e_1} + A_{e_2} + \dots + A_{e_m}$ is $m$-distance-regular and that 
$\mathcal{Z}$ is composed of the $m$-distance matrices $A_\ell$ of $G$. Therefore, $\mathcal{Z}$ is uniquely defined by the order $\leq$ and the generating  matrices $A_{e_i}$, $i = 1,2,\dots,m$. 
\end{proof}

\section{Examples}\label{s4}

This section provides a few examples of $m$-distance-regular graphs and illustrates their relation to $m$-variate $P$-polynomial association schemes.

\subsection{Cartesian product of distance-regular graphs} \label{s4subs1}

For each $i = 1,2,\dots, m$, let  $G_i$ be a distance-regular graph and denote its $\ell$-th distance matrix by $A_\ell^{(i)}$. The adjacency matrix $\mathcal{A}$ of the Cartesian product of these graphs $\mathcal{G} = G_1 \square G_2 \square \dots \square G_m$ is   given by
\begin{equation}\label{cp1}
\mathcal{A} = \sum_{i = 1}^m I^{(1)} \otimes I^{(2)}\otimes \dots \otimes A_1^{(i)}\otimes \dots \otimes I^{(m)},
\end{equation} 
where $I^{(i)} = A_0^{(i)}$ is the identity matrix whose rows and columns are indexed by the vertices of $G_i$. An $m$-partition $\{\Gamma_i \ | \ i = 1,2,\dots,m\}$ can be introduced using the terms in the sum \eqref{cp1},
\begin{equation}
 (x,y) \in \Gamma_i \quad \iff \quad (I^{(1)} \otimes I^{(2)}\otimes \dots \otimes A_1^{(i)}\otimes \dots \otimes I^{(m)})_{xy} \neq 0,
\end{equation}
where $x$ and $y$ are vertices in $\mathcal{G}$. This $m$-partition can be seen as colouring the edges of $\mathcal{G}$ according to the graph $G_i$ they originate from. It can be shown that this partition makes $\mathcal{G}$ an $m$-distance-regular graph with respect to any monomial order $\leq$. Indeed, let us consider two vertices $x = (x_1,x_2, \dots, x_m)$ and $y = (y_1,y_2,\dots,y_m)$ of $\mathcal{G}$. For any $\leq$, they are at $m$-distance
\begin{equation}
d_m(x,y) = (d_1(x_1,y_1), d_1(x_2,y_2),\dots, d_1(x_m,y_m)),
\end{equation}
where $d_1(x_i,y_i)$ is the length of a shortest path between the vertices $x_i$ and $y_i$ in the graph $G_i$. Let us denote $d_m(x,y) = c$. One observes that the number $p_{ab}^c$ of vertices $z = (z_1,z_2, \dots, z_m)$ at $m$-distance $a = (a_1, a_2, \dots, a_m)$ from $x$ and $b = (b_1, b_2, \dots, b_m)$ from $y$ is given by
\begin{equation}
p_{ab}^c = \prod_{i = 1}^{m} {p} _{a_i b_i}^{G_i, c_i},
\end{equation} 
where ${p}_{a_i b_i}^{G_i, c_i}$ are the intersection numbers of the distance-regular graph $G_i$. Therefore  $p_{ab}^c$ does not depend on $x$ and $y$. 
For each $j=1,2,\dots,m$, let $x^{(j)}=(x_1,\ldots, x_m)$ and $ y^{(j)}=(y_1,\ldots, y_m)$ be two vertices of $\mathcal{G}$ such that $x_i=y_i$ for $i\neq j$ and $d_1(x_j,y_j)=1$ in $G_j$. We have $d_m(x^{(j)},y^{(j)})=e_j$, therefore $e_j$ is an $m$-distance of $\mathcal{G}$ for all $j=1,\ldots, m$. 
The graph $\mathcal{G}$ is thus by definition $m$-distance-regular. 

The set of all $m$-distances of $\mathcal{G}$ is 
\begin{equation}
\mathcal{D} = \{\ell \in \mathbb{N}^m \ | \ \ell_i \leq \text{diameter of }G_i\}.
\end{equation} 
Therefore the $m$-variate $P$-polynomial association scheme associated to the $m$-distance-regular graph $\mathcal{G}$ is composed of the matrices
\begin{equation}
\mathcal{A}_\ell = A^{(1)}_{\ell_1} \otimes A^{(2)}_{\ell_2}\otimes \dots \otimes A^{(m)}_{\ell_m}, \quad \ell = (\ell_1,\ell_2,\dots,\ell_m) \in \mathcal{D}.
\end{equation}
This scheme notably corresponds to the direct product of the $P$-polynomial association schemes 
associated to the distance regular graphs $G_1$, \ldots, $G_m$, respectively,  
and hence is an $m$-variate $P$-polynomial association scheme with respect to any monomial order, as discussed in \cite{bernard2022bivariate,bannai2023multivariate}. 
This and Theorem~\ref{realdeal} provide another proof that the Cartesian product of $m$ distance-regular graphs is an $m$-distance-regular graph, with respect to any monomial order. 

Figure \ref{fig:CP1} illustrates how toroidal square lattices of dimension $m$ are $m$-distance-regular, since they are obtained by taking the Cartesian product of $m$ cycle graphs, which are distance-regular.

\begin{figure}[h]
\vspace*{-.25cm}
\begin{subfigure}{.5\textwidth}
  \centering
  \hspace*{1cm}
  \includegraphics[width=1\linewidth]{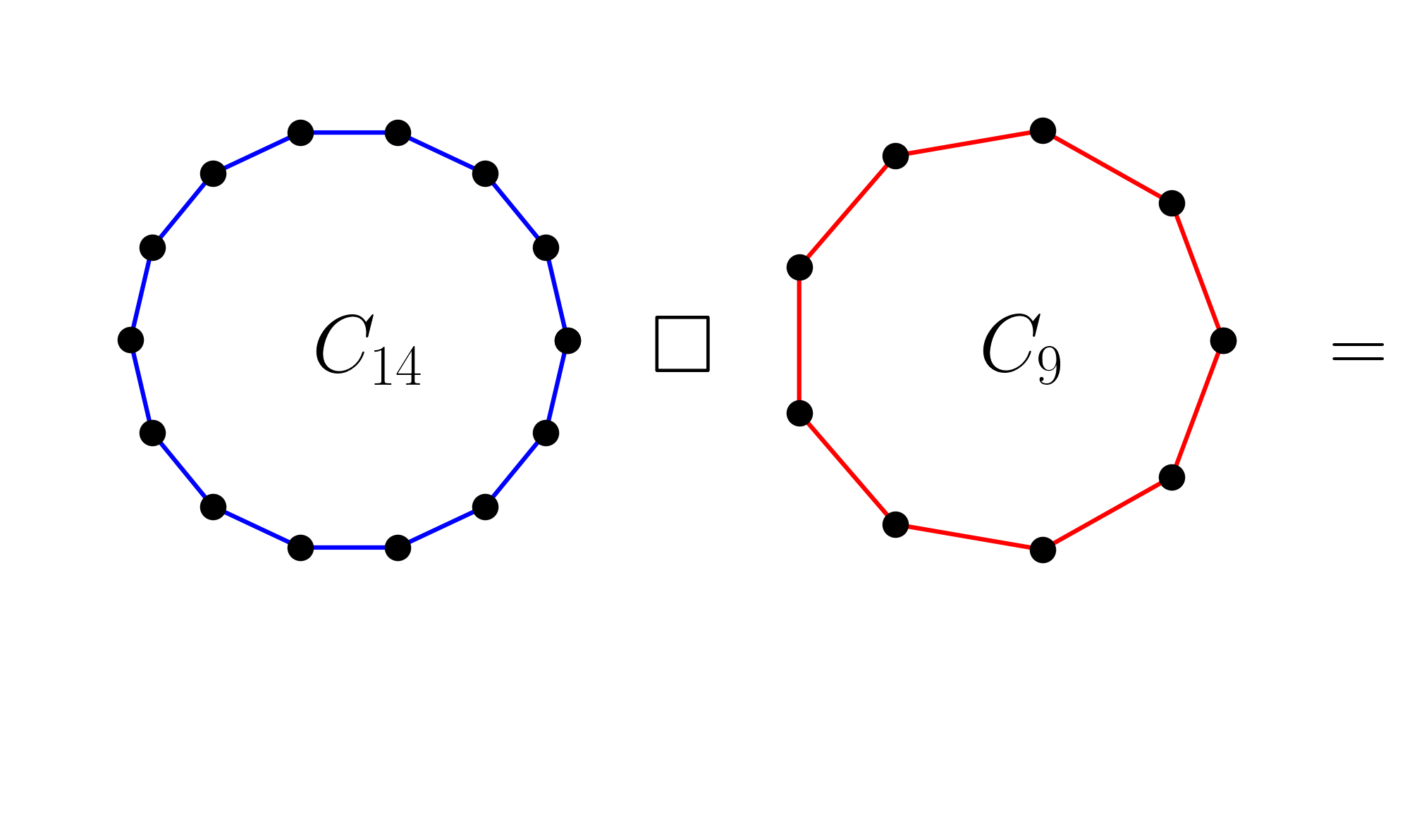}
  \label{fig:sfig1}
\end{subfigure}%
\begin{subfigure}{.5\textwidth}
  \centering
  \hspace*{-1cm}
  \includegraphics[width=.7\linewidth]{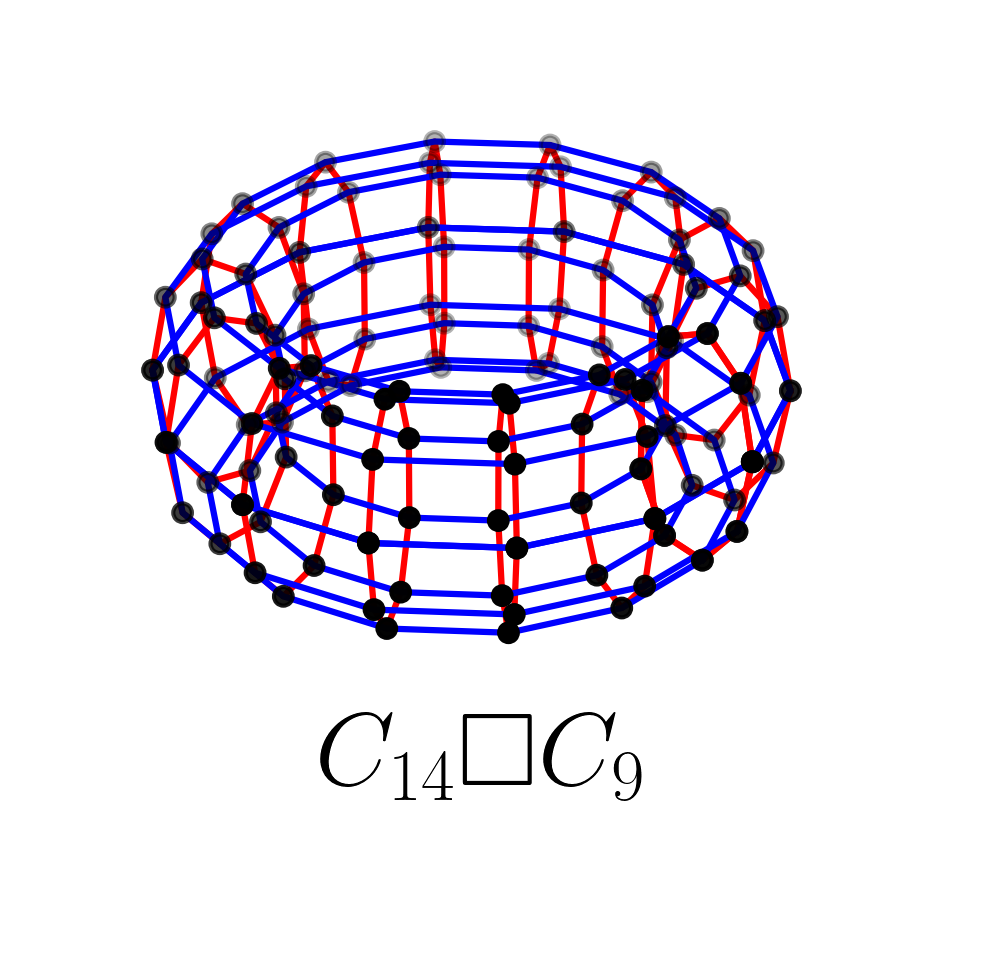}
  \label{fig:sfig2}
\end{subfigure}
\vspace*{-1.2cm}
\caption{Cartesian product of the cycle graphs $C_{14}$ and $C_{9}$. The result is a two-dimensional toroidal square lattice, which is $2$-distance-regular with respect to the blue and red colouring of its edges and any monomial order.}
\label{fig:CP1}
\end{figure}

\subsection{Hamming graphs and the symmetrization of association schemes}

Let $\mathcal{Z} = \{A_0, A_1, \dots, A_m\}$ be a commutative association scheme with vertex set $X$. For  a positive integer $k$, its \textit{symmetrization} or \textit{extension} of length $k$ refers to the set
\begin{equation}
\mathcal{S}^k(\mathcal{Z}) = \{ \mathcal{A}_{n} \ | \ n \in \mathcal{D}\},
\end{equation}
where
\begin{equation}
 \mathcal{D} = \{ n = (n_1,n_2,\dots,n_m) \in \mathbb{N}^m \ |\  |n| \leq k\}, \quad |n| = \sum_{i = 1}^m n_i
\end{equation}
and
\begin{equation}
\mathcal{A}_n = \frac{1}{(n_1)! (n_2)! \dots (n_m)! (k-|n|)!} \sum_{\pi \in S_k} \pi \cdot A_1^{\otimes^{n_1}}\otimes A_2^{\otimes^{n_2}} \otimes \dots \otimes A_m^{\otimes^{n_m}} \otimes A_0^{\otimes^{k - |n|}}, 
\end{equation}
where the sum is over all the place permutations, and the prefactor ensures that each term appears only once. 
In the case $m = 2$, the set $\mathcal{S}^k(\mathcal{Z})$ was shown to be a bivariate $P$- and $Q$-polynomial association scheme of type $(1/2,1/2)$ \cite{bernard2022bivariate}. For a general $m \in \mathbb{N}\backslash\{0\}$, it was further shown to be an $m$-variate $P$- and $Q$-polynomial association scheme \cite{bannai2023multivariate} with respect to the \textit{deg-lex} order $\leq$ defined  by
\begin{equation}\label{CPord}
 a \leq b \iff \left\{
	\begin{array}{ll}
		 \sum_{i=1}^m a_i < \sum_{i=1}^m b_i, \\
		 \text{or} \\
		\sum_{i=1}^m a_i = \sum_{i=1}^m b_i \ \text{and the leftmost non-zero entry of $a-b \in \mathbb{Z}^{m}$ is negative.}
	\end{array}
\right. 
\end{equation}
From this result and Theorem \ref{realdeal}, we find that the graph $G=(X,\Gamma)$ whose adjacency matrix is
\begin{equation}
A = \sum_{i = 1}^{m} \mathcal{A}_{e_i},
\end{equation}
is $m$-distance-regular with respect to deg-lex order $\leq$ and the partition of its edges $\Gamma = \Gamma_1 \sqcup \Gamma_2 \sqcup \dots \sqcup \Gamma_m$ with 
\begin{equation}
 (x,y) \in \Gamma_i \quad \iff \quad (\mathcal{A}_{e_i})_{xy} = 1.
\end{equation}
 Since $\mathcal{Z}$ is an association scheme, its matrices have the property of summing to the matrix of ones $J$. It follows that the adjacency matrix $A$ of $G$ can be rewritten as
\begin{equation}
A = \sum_{i = 1}^k \underbrace{I \otimes \dots \otimes I}_{i-1 \text{ times}}\otimes (J- I) \otimes \underbrace{I \otimes \dots \otimes I}_{k - i \text{ times}},
\end{equation}
where $I$ is the identity matrix of dimension $|X|$. This matrix can be identified as the adjacency matrix of the Hamming graph $H(k, |X|)$, leading to the following proposition:
\begin{proposition}\label{lemSYM} 
Let $\mathcal{Z}=\{A_0,A_1,\ldots, A_m\}$ be any $m$-class association scheme on $q$ vertices. Let $\mathcal{S}^k(\mathcal{Z})$ be its extension scheme of length $k$. Then the $m$-distance-regular graph associated to the $m$-variate $P$-polynomial association scheme $\mathcal{S}^k(\mathcal{Z})$ is the Hamming graph $H(k,q)$. In particular, 
the Hamming graph $H(k,q)$ is $m$-distance-regular if there exists an $m$-class association scheme on $q$ vertices.
\end{proposition}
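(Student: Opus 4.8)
The plan is to deduce this directly from Theorem~\ref{realdeal} together with the fact, established in \cite{bannai2023multivariate}, that the extension scheme $\mathcal{S}^k(\mathcal{Z})$ is an $m$-variate $P$-polynomial association scheme. First I would record that $\mathcal{S}^k(\mathcal{Z})$ is $m$-variate $P$-polynomial on the domain $\mathcal{D}=\{n\in\mathbb{N}^m \ |\ |n|\le k\}$ with respect to the deg-lex order \eqref{CPord}; in particular $e_1,\dots,e_m\in\mathcal{D}$, and deg-lex is a genuine monomial order. Then Theorem~\ref{realdeal}, applied with generating matrices $\mathcal{A}_{e_1},\dots,\mathcal{A}_{e_m}$, immediately yields that the graph $G=(X^k,\Gamma)$ whose adjacency matrix is $A=\sum_{i=1}^m\mathcal{A}_{e_i}$ --- with edge colouring $\Gamma_i=\{(x,y):(\mathcal{A}_{e_i})_{xy}=1\}$ --- is $m$-distance-regular with respect to this partition and the deg-lex order, and that the matrices $\mathcal{A}_n$, $n\in\mathcal{D}$, are exactly its $m$-distance matrices. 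So the $m$-distance-regular graph ``associated to'' $\mathcal{S}^k(\mathcal{Z})$ is well defined and equals $G$.

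The remaining task is purely to identify $G$ as a Hamming graph. Writing $q=|X|$ and using $\mathcal{A}_{e_i}=\frac{1}{(k-1)!}\sum_{\pi\in S_k}\pi\cdot A_i\otimes A_0^{\otimes(k-1)}$ together with $A_0=I$ and $\sum_{i=0}^m A_i=J$, I would sum over $i$ to get
\begin{equation}
A=\sum_{i=1}^m\mathcal{A}_{e_i}=\frac{1}{(k-1)!}\sum_{\pi\in S_k}\pi\cdot(J-I)\otimes I^{\otimes(k-1)}.
\end{equation}
Since the place permutations act only by moving the single $(J-I)$ factor among the $k$ slots, and exactly $(k-1)!$ of them send it to any prescribed slot, this collapses to
\begin{equation}
A=\sum_{i=1}^k\underbrace{I\otimes\cdots\otimes I}_{i-1}\ \otimes\ (J-I)\ \otimes\ \underbrace{I\otimes\cdots\otimes I}_{k-i},
\end{equation}
which is precisely the adjacency matrix of $H(k,q)$ on the vertex set $X^k$. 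As $G$ and $H(k,q)$ carry the same vertices and the same adjacency matrix, $G=H(k,q)$, which gives the first assertion. The ``in particular'' is then immediate: the mere existence of an $m$-class scheme $\mathcal{Z}$ on $q$ vertices produces $\mathcal{S}^k(\mathcal{Z})$, and the computation above shows $H(k,q)$ is $m$-distance-regular, with the colouring and the deg-lex order inherited from this construction.

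I do not expect a serious obstacle: the only genuine content is (a) citing from \cite{bannai2023multivariate} that $\mathcal{S}^k(\mathcal{Z})$ satisfies conditions (i)--(iii) of Definition~\ref{defmv} for the deg-lex order, and (b) the small piece of bookkeeping that turns the symmetrization prefactor and the relation $\sum_i A_i=J$ into the standard Hamming adjacency matrix; everything else is a direct invocation of Theorem~\ref{realdeal}. One point worth a sentence is that $q=|X|\ge 2$ (an $m$-class scheme with $m\ge 1$ has at least one non-identity class), so that $H(k,q)$ is a genuine graph and the statement is non-vacuous.
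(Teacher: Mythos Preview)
Your proposal is correct and follows essentially the same route as the paper: cite \cite{bannai2023multivariate} for the $m$-variate $P$-polynomial property of $\mathcal{S}^k(\mathcal{Z})$, invoke Theorem~\ref{realdeal} to obtain the associated $m$-distance-regular graph, and then use $\sum_{i=0}^m A_i=J$ to rewrite $A=\sum_i\mathcal{A}_{e_i}$ as the Hamming adjacency matrix. Your explicit accounting of the $(k-1)!$ prefactor and the remark that $q\geq 2$ are small clarifications the paper leaves implicit, but the argument is otherwise identical.
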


Let $I$ be the identity matrix of size 2 and let $\sigma_x=J-I$. 
Then $\mathcal{Z} = \{I \otimes I, I \otimes \sigma_x + \sigma_x \otimes I, \sigma_x \otimes \sigma_x \} $ is a two-class association scheme on 4 vertices. 
Therefore the Hamming graph $H(k,4)$ is 2-distance regular for any integer $k\geq 2$. The edge partitions of $H(2,4)$ and $H(3,4)$ into two parts with respect to which and the deg-lex order the graphs are 2-distance-regular are shown in  Figure \ref{fig2}.\\

\noindent \textbf{Remark}: For any integer $q\geq 2$, the collection of the two $q\times q$ matrices $\{I, J-I\}$ is a one-class association scheme, and 
 Proposition \ref{lemSYM} reduces to the well-known result that Hamming graphs are distance-regular. The Hamming graph $H(k,q)$ can also be constructed as the $k$-th Cartesian power of the complete graph $K_q$ (which is  distance-regular). The discussion in Section~\ref{s4subs1} implies that $H(k,q)$ is $k$-distance-regular. 
Therefore the opposite implication of the last statement of the Proposition is not true, i.e.,  $H(k,q)$ is  $m$-distance regular does not imply the existence of an $m$-class association scheme on $q$ vertices. For example, $H(2,7)=K_7\square K_7$ is 2-distance-regular, but there are no strongly-regular graphs (association schemes with 2 classes) on 7 vertices \cite{Brouwer2022sgr}.   

The symmetrization of a Hamming scheme is referred to as an \textit{ordered} Hamming scheme. As pointed out in this subsection, $m$-distance-regular graphs associated to the symmetrization of a scheme are Hamming graphs. It follows that the $m$-distance-regular graphs associated to ordered Hamming schemes are Hamming graphs.

\begin{figure}[h]
\begin{subfigure}{.5\textwidth}
  \centering
  \includegraphics[width=1\linewidth]{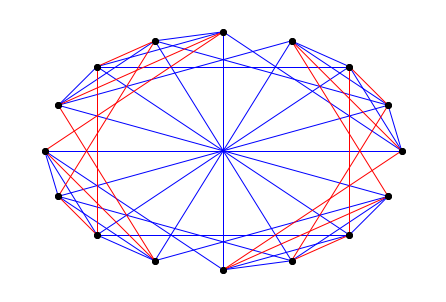}
  \label{fig:sfig1}
\end{subfigure}%
\begin{subfigure}{.5\textwidth}
  \centering
  \includegraphics[width=1\linewidth]{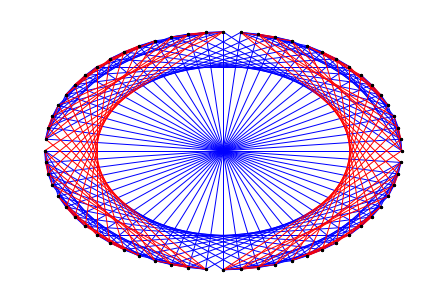}
\end{subfigure}
\vspace*{-1cm}
\caption{ Hamming graphs $H(2,4)$ on the left and $H(3,4)$ on the right. They are $2$-distance-regular with respect to the blue and red partition of the edges and the deg-lex order. This is the partition obtained using the symmetrizations $\mathcal{S}^2(\mathcal{Z})$  and $\mathcal{S}^3(\mathcal{Z})$ of $\mathcal{Z} = \{I \otimes I, I \otimes \sigma_x + \sigma_x \otimes I, \sigma_x \otimes \sigma_x \} $.}
  \label{fig2}
\end{figure}

\subsection{24-cell}\label{subse:24cell}

Consider the set $X$ of vectors in $\mathbb{R}^4$ obtained by permutations of $(\pm 1, \pm 1, 0 ,0)$. Its cardinality is $24$ and any two of its elements are at a Euclidean distance $0$, $\sqrt{2}$, $2$, $\sqrt{6}$ or $\sqrt{8}$ as vectors in $\mathbb{R}^4$.  Let $G = (X,\Gamma)$ be the graph with edge set $\Gamma  = \Gamma_1 \sqcup \Gamma_2$ where for $x,y\in X$, 
\begin{equation}
(x,y) \in \Gamma_1 \ \iff \ \|x-y\| = 2,
\end{equation}
\begin{equation}
(x,y) \in \Gamma_2 \ \iff \ \|x-y\| = \sqrt{6}.
\end{equation}
This graph is $2$-distance-regular with respect to the partition $\{\Gamma_1, \Gamma_2\}$ and the order $\leq$ defined in equation \eqref{CPord}. Indeed, one can check that this partition and order implies a correspondence between the $2$-distances in $G$ and the usual distance between elements of $X$ in $\mathbb{R}^4$:
\begin{equation}
 d_2(x,y) = (0,0) \ \iff \ \|x-y\| = 0,
\end{equation}

\begin{equation}
 d_2(x,y) = (1,0) \ \iff \ \|x-y\| = 2,
\end{equation}

\begin{equation}
 d_2(x,y) = (0,1) \ \iff \ \|x-y\| = \sqrt{6},
\end{equation}

\begin{equation}
 d_2(x,y) = (0,2) \ \iff \ \|x-y\| = \sqrt{2},
\end{equation}

\begin{equation}
 d_2(x,y) = (2,0) \ \iff \ \|x-y\|= \sqrt{8}.
\end{equation}
Furthermore, one can use rotations and reflections in $\mathbb{R}^4$ to show that the number of vectors $z$ in $X $ such that $\|x-z\| =r_1$ and $\|y-z\| =r_2$ given that $\|x-y\| = r_3$ depends on $r_i$ but not on $x$ and $y$. 

The graph $G=(X,\Gamma  = \Gamma_1 \sqcup \Gamma_2)$ is closely related to the convex regular 4-polytope known as \textit{$24$-cell}, which is the convex hull of $X$ and is associated to a univariate $Q$-polynomial association scheme \cite{Billnotes}. The skeleton of this polytope is the graph on $X$, where $x$ and $y$ are adjacent if and only if $\|x-y\| = \sqrt{2}$. In other words, the skeleton graph of the $24$-cell is the $2$-distance $d_2(x,y) = (0,2)$ graph of the $2$-distance-regular graph $G$. Alternatively, one could interpret $G$ as the distance-2 graph of the skeleton graph of the $24$-cell.

\section{Refinements and additional conditions}\label{s5}
In \cite{bernard2022bivariate}, the bivariate polynomial structures of symmetric association schemes constrained by both a monomial order $\leq$ and a partial order $\poo$ are studied. The benefit of this partial order is to restrict the type of recurrence relations satisfied by the polynomials arising from the association schemes. As shown in \cite{bernard2022bivariate}, several natural examples of association schemes involving bivariate polynomials do indeed possess a structure which is more refined than what is obtained using only a total order $\leq$. The extra properties induced by the additional partial order are therefore relevant, as they may lead to a more attainable classification of known bivariate schemes. 

Motivated by this, we now refine the definitions of multivariate $P$-polynomial association schemes and distance-regular graphs considered in the previous sections using a partial order, with which the original monomial order is compatible, and show that they are in correspondence. Conditions concerning the domain and its boundary are also discussed. These refinements and conditions allow us to examine the relations between the two definitions of bivariate $P$-polynomial association schemes in 
\cite{bernard2022bivariate} and \cite{bannai2023multivariate}, 
and to obtain a graph interpretation of the bivariate $P$-polynomial association schemes of type $(\alpha,\beta)$ as well. The differences between the definitions of \cite{bernard2022bivariate} and of \cite{bannai2023multivariate} are explained and illustrated with an example. 

\subsection{Partial order refinement}\label{ssec:porefin}
Let $\leq$ be a monomial order on $\NN^m$ and let $\po$ be a partial order on $\NN^m$ with the following additional properties:
\begin{itemize}
\item[(i)] for all $a,b \in \NN^m$, if $a \po b$ then  $a  \leq b$;
\item[(ii)] for all $a,b,c \in \NN^m$, if $a \po b$ then $a + c \po b + c$;
\item[(iii)] for all $a \in \NN^m$, we have $o \po a$, where $o=(0,\dots,0) \in \NN^m$.
\end{itemize}
Property (i) means that
$\leq$ is a linear extension of $\po$. Properties (ii) and (iii) are similar to the properties of a monomial order (see Definition \ref{defmo}). Note that  properties (ii) and (iii) together imply that $a \po a + b$ for any $a,b \in \NN^m$. From now on, we fix a pair $(\po,\leq)$ such that all three properties above are verified. The additional use of a partial order $\po$ which is compatible with the total order $\leq$ will put stronger conditions on association schemes, and hence finer structure information on the schemes will be obtained. 

We start by introducing a special type of multivariate polynomials using the property (i) of the pair $(\po,\leq)$. The following definition can be compared to Definition 2.1 in \cite{bernard2022bivariate}.
\begin{definition}\label{def:compa}
An $m$-variate polynomial $v_n(\x)$ of multidegree $n \in \NN^m$ with respect to $\leq$ is called $\po$-compatible if all monomials $\x^a$ appearing in $v_n(\x)$ satisfy $a \po n$.
\end{definition} 
Next, we refine the structure of multivariate $P$-polynomial association schemes by use of $\po$.

\begin{definition}\label{def:mvaspo}
An $m$-variate $P$-polynomial association scheme $\cZ=\{A_n \ | \ n \in \cD \}$ on the domain $\cD \subset \NN^m$ with respect to $\leq$ is called $\po$-compatible if
\begin{itemize}
\item[(ii')] for all $n\in\cD$, the $m$-variate polynomial $v_n(\x)$ of multidegree $n$ satisfying $A_n=v_n(\x)$ is $\po$-compatible;
\item[(iii')] for $i = 1,2, \dots, m$ and $a = (a_1, a_2, \dots, a_m) \in \mathcal{D}$, the product $A_{e_i}  A_{e_1}^{a_1}A_{e_2}^{a_2} \dots A_{e_m}^{a_m}$ is a
linear combination of
\begin{equation}
\{ A_{e_1}^{b_1}A_{e_2}^{b_2} \dots A_{e_m}^{b_m}\ | \ b = (b_1, \dots, b_m) \in \mathcal{D}, \ b \po a + e_i  \}.
\end{equation}
\end{itemize}
\end{definition}
Let us emphasize that Definition \ref{def:mvaspo} is an analogue of Definition \ref{defmv}, where conditions (ii) and (iii) are refined to (ii') and (iii') using the partial order $\po$, and correspond to more structured multivariate $P$-polynomial association schemes. A refined version of Proposition \ref{mprop} holds:
\begin{proposition}\label{mproppo}Let $\mathcal{D}\subset \mathbb{N}^m$ such that $e_1, e_2, \dots, e_m\in \mathcal{D}$ and $\mathcal{Z} = \{A_n \ | \ n \in \mathcal{D}\}$ be a commutative association scheme. Then the following two statements are
equivalent:
\begin{enumerate}
\item[(i')] $\mathcal{Z}$ is a $\po$-compatible $m$-variate $P$-polynomial association scheme on $\mathcal{D}$ with respect to $\leq$;
\item[(ii')] The condition (i) of Definition \ref{defmv} holds for $\mathcal{D}$ and the intersection numbers satisfy,  
for each $i = 1,2,\dots, m$ and each $a \in \mathcal{D}$, $p_{e_i, a}^b \neq 0$ for $b \in \mathcal{D}$ implies $b \po a + e_i$. Moreover, if $a + e_i \in \mathcal{D}$, then $p_{e_i, a}^{a + e_i} \neq 0$ holds.
\end{enumerate}
\end{proposition}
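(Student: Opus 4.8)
The plan is to run the argument of Proposition~\ref{mprop} (given in Appendix~\ref{sec:appendix}) essentially verbatim, but carrying the finer partial order $\po$ through every step in place of the monomial order $\leq$. The one extra ingredient this requires is a $\po$-refined version of the span identity \eqref{vseq} (Lemma~\ref{lem:span}): for every $b \in \NN^m$,
\begin{equation}\label{eq:postar}
\lspan\{A_a \mid a \po b,\ a \in \cD\} = \lspan\big\{\, {\textstyle\prod_{i=1}^m} A_{e_i}^{a_i} \mid a \po b,\ a \in \cD \,\big\},
\end{equation}
together with the reduction statement that ${\textstyle\prod_{i=1}^m} A_{e_i}^{c_i} \in \lspan\{A_b \mid b \in \cD,\ b \po c\}$ for every $c \in \NN^m$. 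I would prove both by induction on $c$ (resp.\ $b$) under the well-ordering $\leq$: the inductive step uses property~(ii) of the pair $(\po,\leq)$ to absorb an extra factor $A_{e_i}$ into a partial-order bound ($a' \po a$ implies $a' + e_i \po a + e_i$), transitivity and antisymmetry of $\po$ to control nested bounds, and property~(i) of the pair to know that these $\po$-spans lie inside the corresponding $\leq$-spans, so that the results of \cite{bannai2023multivariate} recalled in Appendix~\ref{sec:appendix} stay available. Note also that \eqref{vseq} already makes $\{\prod_j A_{e_j}^{b_j} \mid b \in \cD\}$ a basis of the Bose--Mesner algebra, so for a fixed relabeling the polynomial $v_n$ with $A_n = v_n(\x)$ and all exponents in $\cD$ is unique.

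For the implication $(i') \Rightarrow (ii')$: condition~(i) of Definition~\ref{defmv} is part of the hypothesis, and since a $\po$-compatible scheme is in particular $m$-variate $P$-polynomial with respect to $\leq$, Proposition~\ref{mprop} already supplies $p_{e_i,a}^{a+e_i} \neq 0$ whenever $a + e_i \in \cD$. It remains to strengthen the conclusion $p_{e_i,a}^b \neq 0 \Rightarrow b \leq a+e_i$ to $b \po a+e_i$. Expand $A_{e_i}A_a = A_{e_i}v_a(\x)$; since $v_a$ is $\po$-compatible with exponents in $\cD$ (condition~(ii') of Definition~\ref{def:mvaspo}), each monomial contributes a term $A_{e_i}\prod_{j=1}^m A_{e_j}^{a'_j}$ with $a' \in \cD$ and $a' \po a$, and condition~(iii') of Definition~\ref{def:mvaspo} rewrites it as a combination of $\prod_{j=1}^m A_{e_j}^{b_j}$ with $b \in \cD$ and $b \po a'+e_i \po a+e_i$. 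Hence $A_{e_i}A_a$ lies in the right-hand side of \eqref{eq:postar} at $b = a+e_i$, which equals $\lspan\{A_c \mid c \in \cD,\ c \po a+e_i\}$; comparing with $A_{e_i}A_a = \sum_b p_{e_i,a}^b A_b$ and using linear independence of the $A_b$ gives the claim.

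For the implication $(ii') \Rightarrow (i')$: as $b \po a+e_i$ implies $b \leq a+e_i$, statement~(ii') implies statement~(ii) of Proposition~\ref{mprop}, so $\cZ$ is $m$-variate $P$-polynomial on $\cD$ with respect to $\leq$, with its unique polynomials $v_n$. I would show by a single induction on $n$ under $\leq$ that each $v_n$ is $\po$-compatible and that \eqref{eq:postar} and the reduction statement hold through level $n$. In the inductive step, write $n = a+e_i$ with $a = n-e_i \in \cD$ (using condition~(i) of Definition~\ref{defmv}); the intersection-number hypotheses give $p_{e_i,a}^n A_n = A_{e_i}A_a - \sum_{b \po n,\, b \neq n} p_{e_i,a}^b A_b$ with $p_{e_i,a}^n \neq 0$. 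Expanding $A_{e_i}A_a = A_{e_i}v_a(\x)$ by the inductive hypothesis and reducing the resulting monomials $\x^{a'+e_i}$ (all $\po n$) back into $\lspan\{\x^b \mid b \in \cD,\ b \po n\}$ via the reduction statement at lower levels, one reads off from the basis expansion that $v_n$ has all exponents in $\cD$ and $\po n$, and that the coefficient of $\x^n$ is the nonzero leading coefficient of $v_a$ divided by $p_{e_i,a}^n$, so the multidegree is exactly $n$. Finally condition~(iii') is exactly \eqref{eq:postar} at $b = a+e_i$ together with the $\po$-compatibility of the $v_c$ for $c \po a+e_i$.

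The main obstacle is the bookkeeping in this joint induction: one must check that reducing an out-of-domain monomial $\x^{a'+e_i}$ with $a'+e_i \notin \cD$ produces only monomials $\x^b$ with $b \in \cD$ and $b \po a'+e_i$ (not merely $b \leq a'+e_i$), and that none of the auxiliary terms $A_b$ with $b \po n$, $b \neq n$ reintroduces the leading monomial $\x^n$; both come down to repeated use of property~(ii) of the pair $(\po,\leq)$ together with transitivity and antisymmetry of $\po$. Once this is in place, the rest is a faithful transcription of the $\leq$-arguments in Appendix~\ref{sec:appendix}.
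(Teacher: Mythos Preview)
Your proposal is correct and takes essentially the same approach as the paper: the appendix proves the $\po$-refined span identity as Lemma~\ref{lem:span} and packages your joint induction for $(ii')\Rightarrow(i')$ as Lemma~\ref{statement}, with the reduction statement restricted to the extended domain $\cDext=\cD\cup\{a\in\NN^m\mid \exists\, i,\ a-e_i\in\cD\}$ rather than all of $\NN^m$ (which is all that is actually needed). The flow of both implications, the use of properties (i)--(ii) of the pair $(\po,\leq)$, and the bookkeeping you flag are the same.
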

\begin{proof}
See Proposition \ref{propapp} in Appendix \ref{sec:appendix}. 
\end{proof}

We now define more structured multivariate distance-regular graphs by requiring certain compatibility with $\po$, and show a one-to-one correspondence with refined multivariate $P$-polynomial association schemes defined in Definition~\ref{def:mvaspo}. 
\begin{definition}\label{defabg} An $m$-distance-regular graph $G$ is called $\po$-compatible if for any two vertices $x$ and $y$, if there is a walk of $m$-length $\ell$ between them, then $d_m(x,y)\po \ell$.
\end{definition}
\begin{proposition}\label{prop:equivGraphASpo} There is a one-to-one correspondence between the $\po$-compatible $m$-distance-regular graphs and the $\po$-compatible $m$-variate $P$-polynomial association schemes.
\end{proposition}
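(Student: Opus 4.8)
The plan is to build on Theorem \ref{realdeal}: the underlying correspondence is already there — a $\po$-compatible $m$-distance-regular graph $G$ is sent to the set of its $m$-distance matrices, and a $\po$-compatible $m$-variate $P$-polynomial scheme $\mathcal{Z}=\{A_n\mid n\in\mathcal{D}\}$ is sent to the graph with adjacency matrix $A_{e_1}+\dots+A_{e_m}$ — and these two maps are mutually inverse by Theorem \ref{realdeal} and Corollary \ref{cor:orgen}. So the only thing left to prove is that each map sends $\po$-compatible objects to $\po$-compatible objects. I fix a pair $(\po,\leq)$ as in Section \ref{ssec:porefin} throughout.

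Direction ``graph $\Rightarrow$ scheme'': let $G$ be an $m$-distance-regular graph that is $\po$-compatible in the sense of Definition \ref{defabg}. By Theorem \ref{realdeal} its $m$-distance matrices already form an $m$-variate $P$-polynomial scheme, so by Proposition \ref{mproppo} it suffices to verify the two intersection-number conditions there with $\po$ replacing $\leq$. That $a+e_i\in\mathcal{D}$ implies $p_{e_i,a}^{a+e_i}\neq 0$ is exactly Lemma \ref{lem2}. For the other, $p_{e_i,a}^{b}\neq 0$ with $b\in\mathcal{D}$ produces vertices $x,y,z$ with $d_m(x,y)=b$, $d_m(x,z)=e_i$, $d_m(z,y)=a$; concatenating a walk of $m$-length $e_i$ from $x$ to $z$ with one of $m$-length $a$ from $z$ to $y$ yields a walk of $m$-length $a+e_i$ between $x$ and $y$, so $b=d_m(x,y)\po a+e_i$ by $\po$-compatibility of $G$. (Condition (i) of Definition \ref{defmv} for $\mathcal{D}$ holds by Lemma \ref{newco2}, as in Theorem \ref{realdeal}.)

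Direction ``scheme $\Rightarrow$ graph'': by Theorem \ref{realdeal} the graph $G$ with adjacency matrix $A_{e_1}+\dots+A_{e_m}$ is $m$-distance-regular with $A_\ell$ its $\ell$-th $m$-distance matrix, and I must show that the existence of a walk of $m$-length $\ell$ between $x$ and $y$ forces $d_m(x,y)\po\ell$. The main ingredient is a $\po$-refinement of \eqref{vseq} (compare Lemma \ref{lem:span} and Proposition \ref{propapp}):
\[
\prod_{i=1}^m A_{e_i}^{n_i}\in\lspan\{A_a\mid a\in\mathcal{D},\ a\po n\}\qquad\text{for every }n\in\NN^m.
\]
Granting this, a walk of $m$-length $\ell$ gives $\big(\prod_i A_{e_i}^{\ell_i}\big)_{xy}\neq 0$ (reorder by commutativity of the $A_{e_i}$), hence $(A_a)_{xy}\neq 0$ for some $a\in\mathcal{D}$ with $a\po\ell$, i.e.\ $d_m(x,y)=a\po\ell$. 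I would prove the display by induction on $n$ along the well-order $\leq$. If $n\in\mathcal{D}$, condition (ii') of Definition \ref{def:mvaspo} writes $A_n$ as a linear combination of monomials $\prod_i A_{e_i}^{a_i}$ with $a\po n$; isolating the leading term $\prod_i A_{e_i}^{n_i}$ (nonzero coefficient) and applying the induction hypothesis to the remaining monomials (for which $a\po n$ and $a\neq n$, hence $a<n$), together with transitivity of $\po$, gives the claim. If $n\notin\mathcal{D}$, I adapt the peeling argument from the proof of Theorem \ref{realdeal}: pick $c\in\mathcal{D}$ with $c_i\leq n_i$ for all $i$ and $|c|$ maximal, choose $k$ with $(n-c)_k\geq 1$, factor $\prod_i A_{e_i}^{n_i}=\big(\prod_i A_{e_i}^{(n-c-e_k)_i}\big)\big(A_{e_k}\prod_j A_{e_j}^{c_j}\big)$, and expand $A_{e_k}\prod_j A_{e_j}^{c_j}$ via condition (iii') into monomials $\prod_j A_{e_j}^{b_j}$ with $b\in\mathcal{D}$ and $b\po c+e_k$; maximality of $|c|$ forces $c+e_k\notin\mathcal{D}$, so $b\neq c+e_k$, whence by property (ii) of $\po$ each new exponent satisfies $n-c-e_k+b\po n$ while $n-c-e_k+b\neq n$, i.e.\ $n-c-e_k+b<n$, and the induction hypothesis applies.

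Putting the two directions together with the mutual-inverse property gives the stated bijection. I expect the main obstacle to be the converse, specifically the $n\notin\mathcal{D}$ case of the displayed span claim: it is strictly stronger than \eqref{vseq} (which only controls $\lspan\{A_a\mid a\leq n\}$), so it genuinely needs the $\po$-refined hypotheses (ii') and (iii'), and one must choose the base point $c$ so that removing a single generator both lowers $n$ in the $\leq$-order and keeps all exponents $\po$-below $n$ — this is exactly where properties (ii) and (iii) of the pair $(\po,\leq)$ enter and where termination of the induction must be checked.
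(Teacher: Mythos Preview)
Your proof is correct and follows essentially the same approach as the paper. Both directions match the paper's argument: the ``graph $\Rightarrow$ scheme'' direction is identical (verify the intersection-number criterion of Proposition~\ref{mproppo}), and your ``scheme $\Rightarrow$ graph'' direction proves the same key span claim $\bA^n\in\lspan\{A_a\mid a\in\cD,\ a\po n\}$ that the paper obtains via Lemma~\ref{lem:span} for $\ell\in\cD$ and the peeling argument from~\eqref{ex1} for $\ell\notin\cD$; your unified induction on $\leq$ with the maximal-$|c|$ trick is a clean repackaging of the same content (and is close in spirit to Lemma~\ref{statement} in the appendix, extended from $\cDext$ to all of $\NN^m$).
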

\begin{proof}
First assume $\mathcal{Z} = \{A_n \ | \ n \in \mathcal{D}\}$ is a $\po$-compatible $m$-variate $P$-polynomial association scheme. It follows from 
Theorem \ref{realdeal} that $\mathcal{Z}$ consists of the $m$-distance matrices of the $m$-distance-regular graph $G$ with adjacency matrix $A = A_{e_1} + \cdots + A_{e_m}$. Now we show $G$ is $\po$-compatible. 
Take any two vertices $x$ and $y$ and assume there is a walk of $m$-length $\ell$ between them. 
In the case $\ell \in \cD$, making use of the following property that a $\po$-compatible $m$-variate $P$-polynomial association scheme has  (see Lemma~\ref{lem:span} in Appendix \ref{sec:appendix}): 
\begin{equation}
\text{span}\{A_a\ | \ a \in \cD, \ a \po \ell \} = \text{span}\Big\{\prod_{i = 1}^m A_{e_i}^{a_i} \ | \ a \in \cD, \ a \po \ell\Big\},  
\end{equation}
or, in the case $\ell \notin \cD$, making use of property (iii') of Definition 
\ref{def:mvaspo} and with a similar argument as in the proof of Theorem~\ref{realdeal} 
around equation \eqref{ex1}, 
we know that, in both cases, there exist coefficients $f_a$ such that
\begin{equation}\label{eqpab}
\prod_{i = 1}^m A_{e_i}^{\ell_i} = \sum_{\substack{a \in \mathcal{D} \\ a \po \ell}} f_a A_a.
\end{equation}
Combined with the fact that $\left(\prod_{i = 1}^m A_{e_i}^{\ell_i}\right)_{xy} \neq 0 $ (i.e.\ the number of walks of $m$-length $\ell$ between $x$ and $y$ is non-zero), it follows that $(A_a)_{xy} \neq 0$  for some $a \po \ell$ and thus
\begin{equation}
d_m(x,y) =a\po \ell,
\end{equation}
where the equality follows from the fact that the matrices of the scheme are the $m$-distance matrices of $G$.

Next we show that the $m$-distance matrices of a $\po$-compatible $m$-distance-regular graph form a $\po$-compatible $m$-variate $P$-polynomial association scheme. Again by Theorem \ref{realdeal} these $m$-distance matrices form an $m$-variate $P$-polynomial association scheme. It follows by Proposition \ref{mprop} that $p_{e_i,a}^{a+e_i}\neq 0$ if $a+e_i \in \cD$. Let us now assume that $p_{e_i, a}^b \neq 0 $ for some $a,b \in \cD$. This implies that there exist three vertices $x$, $y$ and $z$ such that $d_m(x,y)=b$, $d_m(x,z)=e_i$ and $d_m(y,z)=a$. Therefore there is a walk of $m$-length $a + e_i$ between $x$ and $y$. It then follows from the $\po$-compatibility of the graph that
\begin{equation}
b \po a + e_i.
\end{equation}
Therefore the association scheme consisting of the $m$-distance matrices of a $\po$-compatible $m$-distance-regular graph $G$ satisfies statement (ii') in Proposition \ref{mproppo}, and hence is a $\po$-compatible $m$-variate $P$-polynomial association scheme. 
\end{proof}

\subsection{Conditions on the domain and its boundary}
In this subsection, we define some further notions, concerning domains in $\NN^m$ and their boundary, which are necessary for comparing the definitions of bivariate $P$-polynomial schemes given in \cite{bernard2022bivariate} and \cite{bannai2023multivariate}.
 
\begin{definition}\label{def:domain}
(Definition 2.2 in \cite{bernard2022bivariate}) A subset $\mathcal{D}$ of $\mathbb{N}^m$ is called $\po$-compatible if for any $a \in \cD$, one has 
\begin{equation} 
b \po a \quad \Rightarrow  \quad  b \in \cD.
\end{equation}
\end{definition}
Note that a subset $\cD$ of $\NN^m$ which is $\po$-compatible is also said to be a downset of $(\NN^m,\po)$. 
The following result says a downset of $(\NN^m,\po)$ can be used as the domain of an $m$-variate $P$-polynomial association scheme (the $\po$-compatibility of a domain $\cD$ is in general more restraining than condition (i) of Definition \ref{defmv}). \begin{lemma}\label{lemdom} Let $\mathcal{D} \subset \mathbb{N}^m$ be a $\po$-compatible subset. Then, $\cD$ satisfies condition (i) of Definition \eqref{defmv}. 
\end{lemma}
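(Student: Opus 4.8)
The goal is to show that $\po$-compatibility of $\cD$ implies condition (i) of Definition \ref{defmv}, namely that if $n=(n_1,\dots,n_m)\in\cD$ and $0\le n_i'\le n_i$ for all $i$, then $n'=(n_1',\dots,n_m')\in\cD$. The natural strategy is to exploit the definition of $\po$-compatibility of $\cD$: it suffices to show $n'\po n$, since then $n'\in\cD$ follows immediately. So the whole lemma reduces to the order-theoretic statement that the coordinatewise (product) partial order is refined by $\po$, i.e. $n_i'\le n_i$ for all $i$ implies $n'\po n$.

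\textbf{Key steps.} First I would observe that by properties (ii) and (iii) of the pair $(\po,\leq)$ (as noted in the text right after their statement), one has $a\po a+b$ for any $a,b\in\NN^m$. Then, given $n'$ with $n_i'\le n_i$, set $b=n-n'\in\NN^m$ (this is a legitimate element of $\NN^m$ precisely because $n_i'\le n_i$ for every $i$), and apply this observation with $a=n'$ to get $n'\po n'+(n-n')=n$. Finally, invoke Definition \ref{def:domain}: since $n\in\cD$ and $n'\po n$, we conclude $n'\in\cD$. This establishes condition (i).

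\textbf{Main obstacle.} There is essentially no hard step here; the only thing requiring a moment's care is making sure the chain $o\po b$ (property (iii)) plus translation-invariance (property (ii)) genuinely yields $a\po a+b$ — add $a$ to both sides of $o\po b$ to get $a=o+a\po b+a=a+b$ — and that this is applied to the correct decomposition $n=n'+(n-n')$. One should also note that the argument does not need the full strength of $\po$-compatibility applied iteratively coordinate by coordinate; a single application of Definition \ref{def:domain} with the single comparison $n'\po n$ is enough. If one prefers a more pedestrian route avoiding the derived fact $a\po a+b$, one can instead decrease the coordinates one at a time, using property (ii) at each stage to translate $o\po e_i$-type relations, but this is strictly longer and offers no real advantage.
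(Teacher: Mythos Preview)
Your proposal is correct and follows essentially the same approach as the paper: the paper also invokes Definition \ref{def:domain} together with the fact that $a\po a+b$ for all $a,b\in\NN^m$, derived from properties (ii) and (iii) of $\po$.
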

\begin{proof}
This is a direct consequence of Definition \ref{def:domain} and the fact that $a \po a + b$ for any $a,b \in \NN^m$, as deduced from properties (ii) and (iii) of the partial order $\po$ at the beginning of Subsection \ref{ssec:porefin}.
\end{proof}
\begin{lemma}\label{polimp} Let $\mathcal{D} \subset \mathbb{N}^m$ be a $\po$-compatible subset, and let $v_n(\x)$ be a $\po$-compatible $m$-variate polynomial of multidegree $n \in \cD$. Then, all the monomials $\x^a$ in $v_n(\x)$ satisfy $a \in \cD$. 
\end{lemma}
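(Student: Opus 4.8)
The plan is to unwind the two definitions involved and chain them together. By Definition \ref{def:compa}, saying that $v_n(\x)$ is $\po$-compatible means precisely that every monomial $\x^a$ appearing in $v_n(\x)$ satisfies $a \po n$. So the hypothesis already hands me the relation $a \po n$ for each such $a$. What remains is to transfer this into membership in $\cD$, and here I would invoke the hypothesis that $\cD$ is a $\po$-compatible subset in the sense of Definition \ref{def:domain}: for any element of $\cD$, everything below it under $\po$ is again in $\cD$.

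First I would fix an arbitrary monomial $\x^a$ occurring in $v_n(\x)$. Since $v_n$ is $\po$-compatible (by the assumption of the lemma), I get $a \po n$. Second, since $n \in \cD$ by hypothesis and $\cD$ is $\po$-compatible (a downset of $(\NN^m,\po)$), the implication $b \po n \Rightarrow b \in \cD$ from Definition \ref{def:domain} applies with $b = a$, yielding $a \in \cD$. Since the monomial was arbitrary, this proves the claim. The whole argument is essentially a single line combining the two relevant definitions, so there is no real obstacle here; the statement is a bookkeeping lemma whose only purpose is to record that $\po$-compatibility of the domain plus $\po$-compatibility of the polynomials automatically enforces the monomial-membership condition (condition (ii) of Definition \ref{defmv}) — no appeal to the association scheme structure, to the order $\leq$, or to properties (ii)--(iii) of $\po$ is needed beyond what is folded into the definitions.

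If one wanted to be slightly more careful about edge cases, the only thing to note is that the constant monomial (if present, i.e. $a = o$) poses no issue: $o \po n$ holds by property (iii) of $\po$, and $o \in \cD$ follows from $\po$-compatibility of $\cD$ applied to any element (in particular $n$). But since $\po$-compatibility of $v_n$ is stated to cover \emph{all} monomials $\x^a$ in $v_n(\x)$, the relation $a \po n$ is guaranteed uniformly, so even this remark is subsumed. I would therefore present the proof as a two-sentence deduction.

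\begin{proof}
Let $\x^a$ be any monomial appearing in $v_n(\x)$. Since $v_n(\x)$ is $\po$-compatible, Definition \ref{def:compa} gives $a \po n$. As $n \in \cD$ and $\cD$ is $\po$-compatible, Definition \ref{def:domain} yields $a \in \cD$. Since the monomial $\x^a$ was arbitrary, all monomials $\x^a$ in $v_n(\x)$ satisfy $a \in \cD$.
\end{proof}
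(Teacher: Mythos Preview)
Your proof is correct and matches the paper's approach exactly: the paper simply states that the result follows from Definitions \ref{def:compa} and \ref{def:domain}, and your argument is precisely the unwinding of those two definitions.
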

\begin{proof}
This follows from Definitions \ref{def:compa} and \ref{def:domain}.
\end{proof}
Therefore the $\po$-compatibility of the domain $\mathcal{D}$ and of the polynomials $v_n(\x)$  naturally imply one of the requirements concerning the polynomials in condition (ii) of Definition \ref{defmv}, that is, all the monomials of $v_n(\x)$ have multidegree in $\mathcal{D}$. Moreover, if $\cZ$ is a $\po$-compatible $m$-variate $P$-polynomial association scheme on a $\po$-compatible domain $\mathcal{D}$, then the polynomials $v_n(\x)$  satisfy certain recurrence relations as explained next. 
\begin{lemma}\label{lem:corresprec}(Lemma 2.5 in \cite{bernard2022bivariate}) 
Suppose $\cZ=\{A_n \ | \ n \in \cD\}$ is an association scheme such that $\cD \subset \NN^m$ is a $\po$-compatible domain and such that $A_n = v_n(A_{e_1},\dots,A_{e_m})$ for some $\po$-compatible $m$-variate polynomials $v_n(\x)$ of multidegree $n$ with respect to $\leq$ for all $n \in \cD$. Then for any $i\in\{1,\ldots,m\}$ and $a\in \mathcal{D}$ such that $a+e_i\in\mathcal{D}$, 
  \begin{equation}
      x_i v_a(\x)=\sum_{b\po a+e_i}p_{a,e_i}^{b} v_b(\x).
  \end{equation}
\end{lemma}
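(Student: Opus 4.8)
The plan is to translate the Bose--Mesner relations of the scheme into polynomial identities by substituting the generating matrices $A_{e_i}$ for the variables $x_i$. First I would recall that, since $\cZ$ is a commutative association scheme, the Bose--Mesner relations give
\begin{equation}
A_{e_i} A_a = \sum_{b \in \cD} p_{a,e_i}^{b}\, A_b
\end{equation}
for each $i \in \{1,\ldots,m\}$ and $a \in \cD$. The key point is to control which intersection numbers $p_{a,e_i}^b$ can be nonzero. Because the domain $\cD$ is $\po$-compatible and the polynomials $v_n(\x)$ are $\po$-compatible of multidegree $n$, Lemma~\ref{polimp} ensures the hypotheses of condition (ii) of Definition~\ref{defmv} are met; combined with the assumption that $A_n = v_n(A_{e_1},\dots,A_{e_m})$ for all $n\in\cD$, we have that $\cZ$ is an $m$-variate $P$-polynomial association scheme on $\cD$ with respect to $\leq$. (One should note here that the statement needs the scheme to actually be $m$-variate $P$-polynomial; this is forced by the hypotheses via the argument just sketched, or one may invoke Definition~\ref{def:mvaspo} directly if the scheme is assumed $\po$-compatible.)

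Next I would invoke Proposition~\ref{mproppo} (or, in the bare $\leq$ setting, Proposition~\ref{mprop}): for $\po$-compatible schemes, $p_{e_i,a}^b \neq 0$ with $b \in \cD$ forces $b \po a + e_i$. Therefore the sum over $b \in \cD$ in the Bose--Mesner relation above is in fact a sum over $b \po a + e_i$ only, giving
\begin{equation}
A_{e_i} A_a = \sum_{b \po a + e_i} p_{a,e_i}^{b}\, A_b.
\end{equation}
Now substitute $A_a = v_a(A_{e_1},\dots,A_{e_m})$ on the left and $A_b = v_b(A_{e_1},\dots,A_{e_m})$ on the right. The left side becomes $A_{e_i}\, v_a(A_{e_1},\dots,A_{e_m})$, which equals the matrix obtained by evaluating the polynomial $x_i v_a(\x)$ at $(A_{e_1},\dots,A_{e_m})$.

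The remaining step is to deduce the polynomial identity $x_i v_a(\x) = \sum_{b \po a + e_i} p_{a,e_i}^{b} v_b(\x)$ from the corresponding matrix identity. This requires that the monomial evaluation map $\x^c \mapsto A_{e_1}^{c_1}\cdots A_{e_m}^{c_m}$, restricted to the multidegrees actually appearing (all $\po a+e_i$, hence in $\cD$ by $\po$-compatibility of the domain), be injective — equivalently, that the matrices $\{A_{e_1}^{c_1}\cdots A_{e_m}^{c_m} : c \in \cD\}$ are linearly independent. This is precisely the content of the span/independence statement used elsewhere in the paper (Lemma~\ref{lem:span} in the appendix, and the property \eqref{vseq}): in an $m$-variate $P$-polynomial scheme the products $\prod_i A_{e_i}^{c_i}$ for $c \in \cD$ form a basis of the Bose--Mesner algebra, so a polynomial relation among them with multidegrees in $\cD$ holding at the matrix level must hold at the polynomial level. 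I expect this last point — justifying that the matrix identity can be ``cancelled'' down to a polynomial identity via linear independence of the monomial images — to be the only real obstacle; everything else is bookkeeping with the partial order. Since both $x_i v_a(\x)$ and $\sum_{b\po a+e_i} p_{a,e_i}^b v_b(\x)$ are $\po$-compatible polynomials with all monomials of multidegree $\po a+e_i$, and they agree after evaluation at the (independent) generators, they are equal as polynomials, which is the claim.
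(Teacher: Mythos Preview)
Your argument has a genuine gap at the step where you invoke Proposition~\ref{mproppo} to conclude that $p_{e_i,a}^b \neq 0$ forces $b \po a+e_i$. That proposition applies only to $\po$-compatible $m$-variate $P$-polynomial schemes in the sense of Definition~\ref{def:mvaspo}, which in particular requires the boundary condition (iii'). The hypotheses of Lemma~\ref{lem:corresprec} give you only a $\po$-compatible domain and $\po$-compatible polynomials $v_n$; they do \emph{not} give you condition (iii) or (iii'), and your sketch that conditions (i) and (ii) of Definition~\ref{defmv} follow does not remedy this. Indeed, the whole point of this lemma in the paper is to cover the setting of Definition~\ref{def:bi} (bivariate $P$-polynomial of type $(\alpha,\beta)$), which explicitly does \emph{not} assume a compatible boundary; Subsection~\ref{ssec:24cell} exhibits schemes satisfying the hypotheses of the lemma but failing condition (iii).

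The paper's proof runs in the opposite direction and thereby avoids this obstacle. It first works purely at the polynomial level: since $v_a$ is $\po$-compatible of multidegree $a$, every monomial of $x_i v_a(\x)$ has exponent $\po a+e_i$; because $a+e_i \in \cD$ and $\cD$ is $\po$-compatible, all these exponents lie in $\cD$, so the polynomial span identity $\lspan\{v_n(\x) : n \po a+e_i\} = \lspan\{\x^n : n \po a+e_i\}$ yields $x_i v_a(\x) = \sum_{b \po a+e_i} \mu_b\, v_b(\x)$ for some scalars $\mu_b$. Only then does one evaluate at $(A_{e_1},\dots,A_{e_m})$ and read off $\mu_b = p_{a,e_i}^b$ from the Bose--Mesner relations and the linear independence of the $A_b$. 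In short, the restriction of the sum to $b \po a+e_i$ is established on the polynomial side \emph{before} passing to matrices, rather than by appealing to properties of the intersection numbers that you have not assumed.
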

\begin{proof}
    As in the proof of Lemma 2.5 in \cite{bernard2022bivariate}, it can be shown that for any $\ell \in \cD$ 
\begin{equation}\label{eq:spanpol}
\text{span}\{v_n(\x) \ | \ n \po  \ell \} = \text{span}\{\x^n \ | \ n \po \ell \}, 
\end{equation} 
and that there exist constants  $\mu_{a,e_i}^{b}$ such that 
\begin{equation}
    x_i v_a(\x)=\sum_{b\po a+e_i}\mu_{a,e_i}^{b} v_b(\x).
\end{equation} 
By evaluating the polynomials at the matrices $A_{e_1},\ldots, A_{e_m}$, we know that $A_{e_i} A_a=\sum_{b\po a+e_i}\mu_{a,e_i}^{b} A_b$, and therefore $\mu_{a,e_i}^{b}=p_{a,e_i}^{b}$.  
\end{proof}

Definition \ref{def:domain} and Lemma \ref{polimp} are essential components of the bivariate $P$-polynomial schemes of type $(\alpha,\beta)$ introduced in \cite{bernard2022bivariate}. We now introduce a different notion of compatibility involving the boundary of a domain $\cD \subset \NN^m$, which is based on (and essential to) the definition of multivariate $P$-polynomial schemes in \cite{bannai2023multivariate} and its refinement in Definition \ref{def:mvaspo}.
\begin{definition}\label{mindom}
Let $\leq$ be a monomial order on $\mathbb{N}^{m}$. An association scheme $\mathcal{Z} = \{A_n \ | \ n \in \mathcal{D} \}$ on a domain $\mathcal{D}\subset \mathbb{N}^m$ that contains $e_1, e_2, \dots,e_m$ is said to have a $\leq$-compatible boundary if for all $a \in \mathcal{D}$ such that $a + e_i \notin \mathcal{D}$ for some $i \in \{1,2,\dots,m\}$ we have
\begin{equation}\label{eq:compboundary}
A_{e_i} \prod_{j=1}^m A_{e_j}^{a_j} \ \in \ \text{span}\Big\{\prod_{j=1}^m A_{e_j}^{b_j}\ |\ b \in \mathcal{D}, \ b \leq a + e_i\Big\}.
\end{equation} 
Similarly, if $\po$ is a partial order on $\NN^m$ which satisfies properties (i)--(iii) at the beginning of Subsection \ref{ssec:porefin} with a fixed monomial order $\leq$, and if \eqref{eq:compboundary} holds with $\leq$ replaced by $\po$, we say that $\cZ$ has a $\po$-compatible boundary.
\end{definition}
Let us remark that the $\leq$-compatibility of the boundary corresponds to condition (iii) of Definition \ref{defmv}, while the $\po$-compatibility is a refinement corresponding to condition (iii') of Definition \ref{def:mvaspo}.

\subsection{Bivariate $P$-polynomial association schemes of type $(\alpha,\beta)$}

We are now ready to discuss the bivariate polynomial structures of association schemes considered in \cite{bernard2022bivariate} and provide an interpretation in terms of graphs. We restrict here to the realm of two variables since this is the case studied in \cite{bernard2022bivariate}, but all the concepts and results could naturally be generalized to the multivariate case.  

From now on, unless otherwise stated, the monomial order $\leq$ will refer to the following \textit{deg-lex} order on $\mathbb{C}[x,y]$ (or equivalently on $\mathbb{N}^2$),
\begin{equation}\label{od1}
x^{i} y^{j} \leq x^{i'} y^{j'} \iff \left\{
	\begin{array}{ll}
		 i + j < i' + j' \\
		 \text{or} \\
		i + j = i' + j' \ \text{and}\ j \leq j',
	\end{array}
\right.
\end{equation}
which is the order used in \cite{bernard2022bivariate}. Note that it is distinct from the $m=2$ case of the deg-lex order \eqref{CPord} used in \cite{bannai2023multivariate}. In \cite{bernard2022bivariate}, a specific partial order $\preceq_{(\alpha, \beta)}$ with two parameters satisfying $0\leq \alpha\leq 1$ and $0 \leq \beta < 1$ is also defined:
\begin{equation}\label{po1}
x^{i} y^{j} \preceq_{(\alpha, \beta)} x^{i'} y^{j'} \iff \left\{
	\begin{array}{ll}
		 i+ \alpha j \leq i' + \alpha j' \\
		 \text{and} \\
		\beta i + j \leq \beta i' + j'.
	\end{array}
\right.
\end{equation}
It can be checked that the partial order $\poo$ and the deg-lex total order $\leq$ satisfy properties (i)--(iii) at the beginning of Subsection \ref{ssec:porefin}. Using the concept of $\poo$-compatibility of polynomials and domains defined in the previous subsections, one can then define bivariate $P$-polynomial association schemes of type $(\alpha,\beta)$ as in \cite{bernard2022bivariate}. 

\begin{definition} \label{def:bi}(Definition 2.3. in \cite{bernard2022bivariate})
Let $\mathcal{D} \subset \mathbb{N}^2$, $0 \leq \alpha\leq 1$, $0 \leq \beta<1$ and $\preceq_{(\alpha,\beta)}$ be the partial order defined in \eqref{po1}.
 The association scheme $\mathcal{Z}$ is called bivariate $P$-polynomial of type $(\alpha,\beta)$ on the domain $\mathcal{D}$ if the following two conditions are satisfied:
 \begin{itemize}
  \item[(i)] there exists a relabeling of the adjacency matrices:
 \begin{equation}
  \{A_0,A_1,\dots, A_N\} = \{ A_{n} \ |\ n = (n_1,n_2) \in \mathcal{D} \},
 \end{equation}
such that, for $n = (n_1,n_2) \in \mathcal{D}$,
\begin{equation}
 A_{n}=v_{n_1 n_2}(A_{10},A_{01})\,, \label{eq:vij}
\end{equation}
where  $v_{n_1 n_2}(x,y)$ is a $\poo$-compatible bivariate polynomial of multidegree $(n_1,n_2)$;
\item[(ii)] $\mathcal{D}$ is $\poo$-compatible.
 \end{itemize}
\end{definition}
Bivariate $P$-polynomial association schemes of type $(\alpha,\beta)$ can be characterized in terms of their intersection numbers, by use of $\preceq_{(\alpha,\beta)}$. This correspondence is captured in the following proposition.
\begin{proposition}\label{pr:z1} (Proposition 2.6. in \cite{bernard2022bivariate}) Let $\mathcal{Z}=\{ A_{n} \ |\ n = (n_1,n_2) \in \mathcal{D} \}$ be an association scheme. The statements $(i)$ and $(ii)$ are equivalent:
\begin{itemize}
\item[(i)] $\mathcal{Z}$ is a bivariate $P$-polynomial association scheme of type $(\alpha,\beta)$ on $\mathcal{D}$;
\item[(ii)] $\mathcal{D}$ is $(\alpha,\beta)$-compatible and the intersection numbers satisfy, for  $a,b,a + e_1  \in \mathcal{D}$,
  \begin{eqnarray}
   && p_{e_1,a}^{a + e_1}\neq 0,\ \  p_{e_1,a + e_1}^{a}\neq 0, \label{eq:cm1} \\
   && p_{e_1,a}^{b}\neq 0\quad \Rightarrow \quad   b \preceq_{(\alpha,\beta)} a + e_1 ,\label{eq:cm3}
  \end{eqnarray}
  and, for  $a,b,a + e_2 \in \mathcal{D}$,
    \begin{eqnarray}
   && p_{e_2,a}^{a + e_2}\neq 0,\ \ p_{e_2, a + e_2}^{a}\neq 0,  \label{eq:cm2}\\
 && p_{e_2,a}^{b}\neq 0\quad \Rightarrow \quad   b \preceq_{(\alpha,\beta)} a + e_2. \label{eq:cm4}
  \end{eqnarray}
\end{itemize}
\end{proposition}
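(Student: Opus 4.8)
The plan is to prove the two implications directly: for $(i)\Rightarrow(ii)$, I would extract the intersection-number conditions from the recurrence that the polynomials $v_n$ must satisfy; for $(ii)\Rightarrow(i)$, I would reconstruct those polynomials one at a time by solving that recurrence. The underlying principle in both directions is that the deg-lex order~\eqref{od1} is a linear extension of $\poo$ (property~(i) of the pair $(\poo,\leq)$): $\poo$ governs the monomial supports of the $v_n$, while $\leq$ supplies the well-ordering needed to run the inductions. The two key inputs are Lemma~\ref{lem:corresprec}, which turns the polynomial description into recurrences with $\poo$-bounded support, and conditions \eqref{eq:cm1}--\eqref{eq:cm4} read as the statement that $A_{e_i}A_a=\sum_{b\poo a+e_i}p_{e_i,a}^b A_b$ with the ``leading coefficient'' $p_{e_i,a}^{a+e_i}$ nonzero.

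For $(i)\Rightarrow(ii)$ I would proceed as follows. The $(\alpha,\beta)$-compatibility of $\mathcal{D}$ is already part of Definition~\ref{def:bi}. Since the hypotheses of Lemma~\ref{lem:corresprec} (with $m=2$, $\po=\poo$) are exactly those of Definition~\ref{def:bi}, it yields $x_i v_a(\x)=\sum_{b\poo a+e_i}p_{a,e_i}^b v_b(\x)$ whenever $a,a+e_i\in\mathcal{D}$. Evaluating this polynomial identity at $(A_{e_1},A_{e_2})$ and comparing with the Bose--Mesner expansion of $A_{e_i}A_a$, linear independence of the $A_c$'s forces $p_{e_i,a}^c=0$ for $c\not\poo a+e_i$, which is \eqref{eq:cm3}--\eqref{eq:cm4}. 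For the nonvanishing statements in \eqref{eq:cm1}--\eqref{eq:cm2} I would compare coefficients of the monomial $\x^{a+e_i}$: it appears in $x_i v_a$ with the (nonzero) leading coefficient of $v_a$, and on the right-hand side only in $v_{a+e_i}$, since every other $v_b$ has all its monomials of multidegree $\poo b$ with $b\poo a+e_i$, $b\neq a+e_i$, hence $\neq a+e_i$ by antisymmetry of $\poo$; therefore $p_{e_i,a}^{a+e_i}\neq0$. The remaining nonvanishing $p_{e_i,a+e_i}^a\neq0$ I would get from a triple argument: a witness $x,z,y$ for $p_{e_i,a}^{a+e_i}\neq0$ (so $(A_{e_i})_{xz}=(A_a)_{zy}=(A_{a+e_i})_{xy}=1$) also exhibits, for the pair $(z,y)$ which satisfies $(A_a)_{zy}=1$, a vertex ($x$) with $(A_{e_i})_{zx}=1$ and $(A_{a+e_i})_{xy}=1$, so $p_{e_i,a+e_i}^a\geq1$ by the association-scheme axioms. (Alternatively, the standard valency identity $k_a\,p_{e_i,a+e_i}^a=k_{a+e_i}\,p_{e_i,a}^{a+e_i}$ does the job.)

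For $(ii)\Rightarrow(i)$, condition~(ii) of Definition~\ref{def:bi} is assumed, so I would only need to build, for each $n\in\mathcal{D}$, a $\poo$-compatible $v_n$ of multidegree $n$ with $A_n=v_n(A_{e_1},A_{e_2})$. I would do this by induction on $n$ with respect to $\leq$: set $v_o=1$; for $n\neq o$ pick $i$ with $n_i\geq1$ (say $i=1$ when possible, otherwise $i=2$) and let $a=n-e_i$, which lies in $\mathcal{D}$ precisely because $\mathcal{D}$ is a $\poo$-downset and $a\poo n$. By Bose--Mesner together with \eqref{eq:cm3}--\eqref{eq:cm4} one has $A_{e_i}A_a=\sum_{b\in\mathcal{D},\,b\poo n}p_{e_i,a}^b A_b$ with $p_{e_i,a}^n\neq0$ by \eqref{eq:cm1}--\eqref{eq:cm2}; every such $b\neq n$ satisfies $b\poo n$ and $b\neq n$, hence $b\leq n$ and so $b<n$, so $v_b$ is available by the induction hypothesis, as is $v_a$. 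Solving for $A_n$ and substituting $A_{e_i}=x_i$, $A_a=v_a(A)$, $A_b=v_b(A)$ gives $A_n=v_n(A_{e_1},A_{e_2})$ with
\[
v_n(\x)=\frac{1}{p_{e_i,a}^n}\Bigl(x_i\,v_a(\x)-\sum_{\substack{b\in\mathcal{D},\ b\poo n\\ b\neq n}}p_{e_i,a}^b\,v_b(\x)\Bigr).
\]
I would then check that every monomial of $v_n$ has multidegree $\poo n$ (those coming from $x_i v_a$ are $\poo a+e_i=n$; those coming from each $v_b$ are $\poo b$ with $b\poo n$, $b\neq n$), and that $\x^n$ survives in $v_n$ with coefficient equal to the leading coefficient of $v_a$ divided by $p_{e_i,a}^n$, so that $v_n$ has multidegree exactly $n$ and is $\poo$-compatible.

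The step I expect to be the main obstacle is the same in both directions: controlling the leading monomial $\x^{a+e_i}$ so that it survives (on one side only in $(i)\Rightarrow(ii)$, in the constructed polynomial in $(ii)\Rightarrow(i)$) and never cancels. This is where one must use the antisymmetry of $\poo$, not just property~(i): any index $b$ other than $a+e_i$ occurring in the relevant recurrence satisfies $b\poo a+e_i$, and then transitivity forces every monomial of $v_b$ to have multidegree $\poo a+e_i$ and distinct from $a+e_i$, ruling out cancellation. I would also note that this is essentially the two-variable instance of the mechanism behind Proposition~\ref{mproppo}, but that it must be run with $\poo$ throughout rather than being routed through $\leq$, since Definition~\ref{def:bi} demands that $\mathcal{D}$ be a $\poo$-downset, a requirement that is in general strictly stronger than condition~(i) of Definition~\ref{defmv}.
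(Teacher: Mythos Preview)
The paper does not provide its own proof of this proposition; it is quoted verbatim from \cite{bernard2022bivariate} and stated without argument. Your proposal is correct. The strategy you outline---using Lemma~\ref{lem:corresprec} for $(i)\Rightarrow(ii)$ and building the polynomials $v_n$ by induction on the deg-lex order for $(ii)\Rightarrow(i)$---parallels the mechanism the paper does spell out in Appendix~\ref{sec:appendix} for the closely related Proposition~\ref{mproppo} (via Lemma~\ref{statement} and Proposition~\ref{propapp}), specialised to $m=2$ and $\po=\poo$. One point you handle that has no analogue in Proposition~\ref{mproppo} is the extra nonvanishing condition $p_{e_i,a+e_i}^{a}\neq 0$ appearing in \eqref{eq:cm1}--\eqref{eq:cm2}; your combinatorial triple argument (equivalently, the valency identity) is the standard way to deduce it from $p_{e_i,a}^{a+e_i}\neq 0$ in a symmetric scheme and is correct.
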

Note Proposition~\ref{pr:z1} is not the same as  
Proposition~\ref{mproppo} by taking the partial order $\po$ to be $\poo$, as Proposition~\ref{mproppo} corresponds to multivariate $P$-polynomial schemes satisfying the boundary condition \eqref{eq:compboundary}, and hence $a+e_i$ is not required to be in $\mathcal{D}$ for the implication $(p_{e_i,a}^{b}\neq 0\; \Rightarrow \;   b \po a + e_i)$ to hold. 

We now emphasize two differences between Definition \ref{def:bi} and Definitions \ref{defmv} and its refinement \ref{def:mvaspo}. First, the $\poo$-compatibility of the domain in Definition \ref{def:bi} implies the condition (i) of Definition \ref{defmv}, but the converse is not true. Second, Definition \ref{def:bi} does not require $\leq$-compatibility or $\poo$-compatibility of the boundary; this constrasts with Definition \ref{def:mvaspo}. Therefore Definition \ref{def:bi} is more restrictive on the domain, but less restrictive on the Bose-Mesner relations at the boundary. 

As a special case of Lemma \ref{lem:corresprec}, the matrices and the bivariate polynomials $v_{n_1n_2}$ of a bivariate $P$-polynomial association scheme of type $(\alpha,\beta)$ satisfy the same recurrence relations, as long as all the polynomial multidegrees are in the domain $\mathcal{D}$. 
That is, for $\mathcal{D}\subset \mathbb{N}^2$, $a \in \cD$ and $i=1,2$, if $a+e_i\in\mathcal{D}$, then 
\begin{equation} \label{eq:recAs}
A_{e_i }A_{a} =\sum_{ \ell \preceq_{(\alpha,\beta)} a+e_i} p_{e_i,a}^{\ell}A_\ell
\end{equation} 
and 
\begin{equation}
x_i v_a =\sum_{ \ell \preceq_{(\alpha,\beta)} a+e_i} p_{e_i,a}^{\ell} v_\ell, 
\end{equation} 
where we denote $x_1=x$ and $x_2=y$. In contrast, the polynomials associated to a multivariate $P$-polynomial association scheme in the sense of Definition~\ref{defmv} may not satisfy the same recurrences as the matrices do. In fact, the multidegrees of some monomials in $x_iv_a$ might not belong to the domain $\mathcal{D}$. 

On the other hand, the restriction on the indices in \eqref{eq:recAs} may not hold if $a+e_i\notin\mathcal{D}$.
This differs from $\poo$-compatible bivariate $P$-polynomial association schemes in the sense of Definitions~\ref{defmv} and \ref{def:mvaspo} (by taking $m=2$ and $\po$ to be $\poo$), in which case \eqref{eq:recAs} holds even for $a+e_i\notin\mathcal{D}$ because of Proposition~\ref{mproppo}. Also note that without the $\poo$-compatibility, Proposition~\ref{mprop} gives an analogue of \eqref{eq:recAs}:
\begin{equation} \label{eq:recAs2}
A_{e_i}A_{a} =\sum_{ \ell \leq a+e_i} p_{e_i,a}^{\ell}A_\ell.
\end{equation}
This difference highlights that, 
given a bivariate $P$-polynomial association scheme of type $(\alpha,\beta)$ on a domain $\mathcal{D}$, there is no guarantee that it is also bivariate $P$-polynomial in the sense of Definitions \ref{defmv} and/or \ref{def:mvaspo} on the same domain $\mathcal{D}$ and with respect to the deg-lex order \eqref{od1} and partial order $\poo$, as it is not true that the former schemes always have a compatible boundary. This will be illustrated in the example of the generalized $24$-cell schemes in Subsection~\ref{ssec:24cell}. Likewise, various examples in \cite{bannai2023multivariate} made it clear that there exist bivariate $P$-polynomial schemes in the sense of Definition \ref{defmv} that are not of type $(\alpha,\beta)$, as the domain might fail to be $\poo$-compatible. Note however that most known examples of bivariate $P$-polynomial schemes of type $(\alpha,\beta)$ have a $\leq$-compatible boundary for some order $\leq$. This is the case for all the examples discussed in \cite{bernard2022bivariate} as pointed out in Remark 4 of \cite{bannai2023multivariate}. In fact, since most of the examples in \cite{bernard2022bivariate} are studied via the intersection numbers $p_{e_1,a}^{b}$ and $p_{e_2,a}^{b}$ for any $a=(a_1,a_2)$ and $b=(b_1,b_2)$ in the domain $\cD \subset \NN^2$ (including the boundary), condition (ii') of Proposition \ref{mproppo} can be used to deduce that these association schemes have a $\poo$-compatible boundary. 

It follows from this discussion that none of the two definitions of bivariate $P$-polynomial schemes implies the other. Yet, we know from many examples that they overlap. The cases where this overlap happens is characterized in the following propositions, which can both be proved straightforwardly by use of Definitions \ref{defmv} and \ref{def:mvaspo} and Lemmas \ref{lemdom} and \ref{polimp}.  

\begin{proposition}\label{rel2def} Let $\mathcal{Z}$ be a bivariate $P$-polynomial association scheme of type $(\alpha,\beta)$ on a domain $\mathcal{D}$, in the sense of Definition \ref{def:bi}. Then it is also a $\poo$-compatible bivariate $P$-polynomial association scheme on the domain $\mathcal{D}$ in the sense of Definitions \ref{defmv} and \ref{def:mvaspo} with the deg-lex monomial order $\leq$ defined in \eqref{od1} and the partial order $\poo$ defined in \eqref{po1} if and only if it has a $\poo$-compatible boundary. 
\end{proposition}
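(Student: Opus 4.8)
The plan is to treat the two implications separately, each time reducing the comparison of the definitions to the single clause that distinguishes the two frameworks, namely the one governing the boundary of $\cD$.

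For the implication from Definitions \ref{defmv}--\ref{def:mvaspo} to the $\poo$-compatible boundary, I would simply note that condition (iii') of Definition \ref{def:mvaspo} asserts, for every $i\in\{1,2\}$ and every $a\in\cD$, that $A_{e_i}A_{e_1}^{a_1}A_{e_2}^{a_2}$ belongs to $\lspan\{A_{e_1}^{b_1}A_{e_2}^{b_2}\mid b\in\cD,\ b\poo a+e_i\}$; specializing this to the pairs $(a,i)$ with $a+e_i\notin\cD$ is, word for word, the statement that $\cZ$ has a $\poo$-compatible boundary in the sense of Definition \ref{mindom}. So this direction is immediate.

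For the converse, assume $\cZ$ is bivariate $P$-polynomial of type $(\alpha,\beta)$ on $\cD$ (Definition \ref{def:bi}) and has a $\poo$-compatible boundary, and verify the defining clauses of Definitions \ref{defmv} and \ref{def:mvaspo} with the deg-lex order $\leq$ of \eqref{od1} and the partial order $\poo$ of \eqref{po1}. Condition (i) of Definition \ref{defmv} is the conclusion of Lemma \ref{lemdom} applied to the $\poo$-compatible domain $\cD$ supplied by part (ii) of Definition \ref{def:bi}. Part (i) of Definition \ref{def:bi} already furnishes the relabeling $A_n=v_n(A_{e_1},A_{e_2})$ with each $v_n$ a $\poo$-compatible polynomial of multidegree $n$, which is precisely condition (ii') of Definition \ref{def:mvaspo}; Lemma \ref{polimp} then shows that every monomial occurring in $v_n$ has exponent vector in $\cD$, so condition (ii) of Definition \ref{defmv} holds as well. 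It remains to verify condition (iii') of Definition \ref{def:mvaspo}; condition (iii) of Definition \ref{defmv} will then follow automatically, since $b\poo a+e_i$ implies $b\leq a+e_i$, so no separate hypothesis on a $\leq$-compatible boundary is needed.

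The only step calling for a moment's care is condition (iii'): fixing $i\in\{1,2\}$ and $a\in\cD$, I would split on whether $a+e_i\in\cD$. If $a+e_i\notin\cD$, the required membership $A_{e_i}A_{e_1}^{a_1}A_{e_2}^{a_2}\in\lspan\{A_{e_1}^{b_1}A_{e_2}^{b_2}\mid b\in\cD,\ b\poo a+e_i\}$ is exactly the $\poo$-compatible boundary hypothesis. If $a+e_i\in\cD$, then $A_{e_i}A_{e_1}^{a_1}A_{e_2}^{a_2}=A_{e_1}^{(a+e_i)_1}A_{e_2}^{(a+e_i)_2}$ is itself a spanning element, obtained by taking $b=a+e_i$ and using reflexivity of $\poo$, so the membership is trivial. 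This closes the verification. I do not expect a genuine obstacle here: the proposition is, in essence, the observation that the boundary clause of Definition \ref{mindom} captures the whole discrepancy between the two notions once the domain and the polynomials are already $\poo$-compatible, and the substantive input consists exactly of Lemmas \ref{lemdom} and \ref{polimp}, as the surrounding discussion anticipates.
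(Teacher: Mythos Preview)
Your proposal is correct and follows exactly the route the paper indicates: the paper states only that the proposition ``can be proved straightforwardly by use of Definitions \ref{defmv} and \ref{def:mvaspo} and Lemmas \ref{lemdom} and \ref{polimp},'' and your argument is precisely this unpacking, with the forward direction read off from condition (iii') and the converse obtained by verifying (i), (ii), (ii'), (iii') in turn via Lemmas \ref{lemdom} and \ref{polimp} and the case split on whether $a+e_i\in\cD$.
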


\begin{proposition}\label{rel2def2} Let $\mathcal{Z}$ be a $\poo$-compatible bivariate $P$-polynomial association scheme on the domain $\mathcal{D}$ in the sense of Definitions \ref{defmv} and \ref{def:mvaspo} with the deg-lex monomial order $\leq$ defined in \eqref{od1} and the partial order $\poo$ defined in \eqref{po1}. Then it is also a bivariate $P$-polynomial association scheme of type $(\alpha,\beta)$ in the sense of Definition \ref{def:bi} if and only if it has a $\poo$-compatible domain $\mathcal{D}$. 
\end{proposition}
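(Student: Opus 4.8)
The plan is to verify directly the two defining conditions of a bivariate $P$-polynomial association scheme of type $(\alpha,\beta)$ (Definition~\ref{def:bi}), treating the two implications separately; throughout, $\leq$ denotes the deg-lex order \eqref{od1} and $\poo$ the partial order \eqref{po1}, which are exactly the orders appearing in Definition~\ref{def:bi}. For the forward implication, suppose $\mathcal{Z}$ is bivariate $P$-polynomial of type $(\alpha,\beta)$ on $\mathcal{D}$ in the sense of Definition~\ref{def:bi}; then condition (ii) of that definition asserts exactly that $\mathcal{D}$ is $\poo$-compatible, so nothing more is required.

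For the reverse implication, suppose $\mathcal{Z}$ is a $\poo$-compatible bivariate $P$-polynomial association scheme on $\mathcal{D}$ in the sense of Definitions~\ref{defmv} and~\ref{def:mvaspo}, and assume in addition that $\mathcal{D}$ is $\poo$-compatible. I would check the two conditions of Definition~\ref{def:bi} in turn. For condition (i): condition (ii) of Definition~\ref{defmv} provides a relabeling $\mathcal{Z} = \{A_n \mid n \in \mathcal{D}\}$ together with bivariate polynomials $v_{n_1 n_2}(x,y)$ of multidegree $n = (n_1,n_2)$ satisfying $A_n = v_{n_1 n_2}(A_{10},A_{01})$, and condition (ii') of Definition~\ref{def:mvaspo} ensures that each $v_{n_1 n_2}$ is $\poo$-compatible; taken together these are precisely condition (i) of Definition~\ref{def:bi}. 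Condition (ii) of Definition~\ref{def:bi}, namely that $\mathcal{D}$ be $\poo$-compatible, is the standing hypothesis. Hence $\mathcal{Z}$ is bivariate $P$-polynomial of type $(\alpha,\beta)$ on $\mathcal{D}$.

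The proof is essentially an unwinding of the definitions, so I do not expect a genuine obstacle; the only point worth making explicit is the asymmetry between the two notions that was isolated in the preceding discussion. Definition~\ref{def:bi} imposes the strictly stronger requirement that $\mathcal{D}$ be a downset for $\poo$ --- which, by Lemma~\ref{lemdom}, already implies condition (i) of Definition~\ref{defmv} --- but it places no condition on the Bose-Mesner products at the boundary, so condition (iii') of Definition~\ref{def:mvaspo} plays no role in the conclusion. For completeness one may also observe, via Lemma~\ref{polimp}, that the $\poo$-compatibility of $\mathcal{D}$ together with that of the polynomials $v_{n_1 n_2}$ forces every monomial $\x^a$ of $v_{n_1 n_2}$ to satisfy $a \in \mathcal{D}$; this is the monomial restriction appearing in Definition~\ref{defmv}(ii), even though Definition~\ref{def:bi} itself does not demand it.
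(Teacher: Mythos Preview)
Your proof is correct and follows essentially the same approach as the paper, which simply notes that the result follows straightforwardly from Definitions~\ref{defmv} and~\ref{def:mvaspo} together with Lemmas~\ref{lemdom} and~\ref{polimp}. Your unwinding of the definitions, including the observation that condition~(iii') plays no role and the invocation of Lemma~\ref{polimp} for completeness, matches the paper's indicated line of argument.
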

Multivariate $P$-polynomial schemes in Definition \ref{defmv} are in one-to-one correspondence with multivariate distance regular graphs. Not all bivariate $P$-polynomial association schemes of type $(\alpha,\beta)$ give rise to a 2-distance regular graph. 
For the ones with a compatible boundary we have the following result. 
\begin{proposition} There is a one-to-one correspondence between the $\poo$-compatible $2$-distance-regular graphs with a $\poo$-compatible domain $\cD\subset \NN^2$ and the bivariate $P$-polynomial association schemes of type $(\alpha,\beta)$ with a $\poo$-compatible boundary.
\end{proposition}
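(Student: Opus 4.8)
The plan is to assemble this final correspondence from the three building blocks already established in the excerpt: Theorem~\ref{realdeal} (the base correspondence between $m$-distance-regular graphs and $m$-variate $P$-polynomial association schemes), Proposition~\ref{prop:equivGraphASpo} (the refinement adding $\po$-compatibility on both sides), and Propositions~\ref{rel2def}~and~\ref{rel2def2} (which pin down exactly when a scheme of type $(\alpha,\beta)$ and a $\poo$-compatible scheme in the sense of Definitions~\ref{defmv}--\ref{def:mvaspo} coincide, namely when the boundary resp.\ the domain are compatible). Specializing $m=2$ and $\po=\poo$ throughout, everything reduces to chaining these equivalences.

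First I would start from a $\poo$-compatible $2$-distance-regular graph $G$ with a $\poo$-compatible domain $\cD\subset\NN^2$. By Proposition~\ref{prop:equivGraphASpo}, its $2$-distance matrices form a $\poo$-compatible bivariate $P$-polynomial association scheme $\cZ$ in the sense of Definitions~\ref{defmv}~and~\ref{def:mvaspo}, on the domain $\cD$. Since that domain is $\poo$-compatible by hypothesis, Proposition~\ref{rel2def2} applies and upgrades $\cZ$ to a bivariate $P$-polynomial association scheme of type $(\alpha,\beta)$ on $\cD$. It remains to check this scheme has a $\poo$-compatible boundary: but this is immediate, since as a scheme in the sense of Definition~\ref{def:mvaspo} it satisfies condition~(iii'), which is exactly $\poo$-compatibility of the boundary in the sense of Definition~\ref{mindom}. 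Conversely, starting from a bivariate $P$-polynomial association scheme of type $(\alpha,\beta)$ with a $\poo$-compatible boundary, Proposition~\ref{rel2def} gives that it is a $\poo$-compatible bivariate $P$-polynomial scheme in the sense of Definitions~\ref{defmv}~and~\ref{def:mvaspo}; its domain $\cD$ is $\poo$-compatible because Definition~\ref{def:bi}(ii) requires it; and then Proposition~\ref{prop:equivGraphASpo} produces the associated $\poo$-compatible $2$-distance-regular graph, whose set of $2$-distances is precisely that $\poo$-compatible $\cD$.

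The last point to nail down is that these two assignments are mutually inverse, i.e.\ that the correspondence is genuinely one-to-one and not merely a pair of maps between the two classes. This follows from the uniqueness already embedded in the earlier results: Corollary~\ref{cor:orgen} says the $m$-variate $P$-polynomial scheme is determined by the order and the generating matrices $A_{e_i}$, and Theorem~\ref{realdeal}(ii)$\Rightarrow$(i) identifies the graph's edge colouring $\Gamma_i$ with the support of $A_{e_i}$ and its $m$-distance matrices with the $A_\ell$. So going graph~$\to$~scheme~$\to$~graph recovers the same edge-coloured graph, and scheme~$\to$~graph~$\to$~scheme recovers the same labelled family of matrices.

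I expect the only real subtlety—and the step I would write most carefully—is bookkeeping the domain versus the boundary: one must be sure that the $\poo$-compatibility of $\cD$ as a set (the downset condition, Definition~\ref{def:domain}) and the $\poo$-compatibility of the boundary of $\cZ$ (the Bose–Mesner spanning condition at indices $a$ with $a+e_i\notin\cD$, Definition~\ref{mindom}) are exactly the two distinct hypotheses being traded between the two sides, as already flagged in the discussion preceding Propositions~\ref{rel2def}~and~\ref{rel2def2}. Everything else is a direct concatenation of the cited statements, so no new estimates or computations are needed; the proof is essentially ``combine Proposition~\ref{prop:equivGraphASpo} with Propositions~\ref{rel2def}~and~\ref{rel2def2}'' together with a one-line invocation of Corollary~\ref{cor:orgen} for injectivity.
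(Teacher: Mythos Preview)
Your proposal is correct and follows exactly the paper's approach: the paper's proof simply states that the result ``directly follows from Propositions~\ref{prop:equivGraphASpo}, \ref{rel2def} and \ref{rel2def2}.'' Your write-up is just a fully unpacked version of that one-line argument, with the extra invocation of Corollary~\ref{cor:orgen} for injectivity being a welcome clarification rather than a departure.
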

\begin{proof}
This directly follows from Propositions \ref{prop:equivGraphASpo}, \ref{rel2def} and \ref{rel2def2}.
\end{proof}

Let us stress again that bivariate $P$-polynomial schemes of type $(\alpha,\beta)$ are associated to polynomials satisfying a special type of recurrence relations on $\mathbb{C}[x,y]$ \cite{bernard2022bivariate}. The results in this section can be seen as connecting a subset of $2$-distance-regular graphs with the investigation of polynomials in $\mathbb{C}[x,y]$ with special properties, further motivating the study of bivariate $P$-polynomial association schemes of type $(\alpha,\beta)$ with a $\poo$-compatible boundary. 
For a general $m\in\mathbb{N}$, this corresponds to $\po$-compatible $P$-polynomial association schemes on a $\po$-compatible domain, which have a graph interpretation and  the associated polynomials satisfy certain recurrences on $\mathbb{C}[\x]$. 

\subsection{Generalized 24-cell}\label{ssec:24cell}

In \cite{MMW} (see Theorem 3.6) a family of association schemes with 4 classes which generalize the $24$-cell is constructed and shown to be  $Q$-polynomial but not $P$-polynomial. Recently, they were shown to be bivariate $P$-polynomial association schemes of type $(0,0)$ \cite{bernard2022bivariate}. We call such an association scheme generalized 24-cell. Here we highlight the fact that they are bivariate $P$-polynomial of type $(\alpha,\beta)$ with respect to multiple domains and that, according to the choice of domain, the scheme may or may not be bivariate $P$-polynomial in the sense discussed in Section \ref{s2} with respect to the deg-lex monomial order \eqref{od1}.

The matrices $L_i$ of intersection numbers of a generalized 24-cell association scheme $\mathcal{Z} = \{A_0, A_1, A_2, A_3, A_4\}$, with entries $(L_i)_{kj}=p_{ij}^k$, are given by $L_0=I_5$, 
\begin{eqnarray}
 &&L_1=\begin{pmatrix}
        0 & 16\ell s^2 & 0 & 0 & 0 \\
        1 & 2(\ell-1)s(4s+1) & (4s-1)(4s+1)& 2(\ell-1)s(4s-1)& 0\\
        0 & 8\ell s^2 & 0 & 8\ell s^2 & 0\\
         0 & 2(\ell-1)s(4s-1) & (4s-1)(4s+1)& 2(\ell-1)s(4s+1)& 1\\
         0 & 0 & 0 & 16\ell s^2 & 0
       \end{pmatrix},\\
        &&  L_2=\begin{pmatrix}
            0 & 0  & 2(4s-1)(4s+1) & 0 & 0\\
            0 & (4s-1)(4s+1) & 0 & (4s-1)(4s+1) & 0\\
            1 & 0 & 32s^2-4 & 0 & 1 \\
               0 & (4s-1)(4s+1) & 0 & (4s-1)(4s+1) & 0\\
               0 & 0  & 2(4s-1)(4s+1) & 0 & 0
           \end{pmatrix},\\
        && L_3=\begin{pmatrix}
        0 & 0 & 0 & 16\ell s^2 & 0 \\
        0 & 2(\ell-1)s(4s-1) & (4s-1)(4s+1)& 2(\ell-1)s(4s+1)& 1\\
        0 & 8\ell s^2 & 0 & 8\ell s^2 & 0\\
         1 & 2(\ell-1)s(4s+1) & (4s-1)(4s+1)& 2(\ell-1)s(4s-1)& 0\\
         0 & 16\ell s^2 & 0 & 0 &  0
       \end{pmatrix},
\end{eqnarray}
and
\begin{eqnarray}
       &&L_4=\begin{pmatrix}
              0 & 0 & 0 & 0 & 1\\
               0 & 0 & 0 & 1 & 0\\
                0 & 0 & 1 & 0 & 0\\
                 0 & 1 & 0 & 0 & 0\\
                  1 & 0 & 0 & 0 & 0
             \end{pmatrix}.
\end{eqnarray}

The parameters $\ell$ and $s$ characterize the different generalizations. The 24-cell association scheme in Section \ref{subse:24cell} corresponds to the case $\ell=2$ and $s=\frac12$. The matrices $A_0$, $A_1$, $A_2$, $A_3$ and $A_4$ are then associated to the Euclidean distances in $\mathbb{R}^4$ of 0, $\sqrt{2}$, 2, $\sqrt{6}$ and $\sqrt{8}$, respectively. 

Now, consider the two subsets of $\mathbb{N}^2$ illustrated in Figure~\ref{Fig:twoDsfor24cell} and defined as
\begin{equation}
 \mathcal{D}_1 = \{(0,0), (1,0), (0,1), (2,0),(1,1)\},\quad \mathcal{D}_2 = \{(0,0), (1,0), (0,1), (2,0), (0,2)\}.
\end{equation}

\begin{figure}[htbp]
\begin{center}
\begin{tikzpicture}[scale=1.0]
\draw[->] (0,0)--(4,0);\draw[->] (0,0)--(0,3);
\draw [fill] (0,0) circle (0.07);
\draw [fill] (1,0) circle (0.07);
\draw [fill] (2,0) circle (0.07);
\draw [fill] (0,1) circle (0.07);
\draw [fill] (1,1) circle (0.07);
\draw (0,3) node[above] {$n_2$};
\draw (4,0) node[above] {$n_1$};
\draw (2,-.5) node[below] {$\mathcal{D}_1$};
\end{tikzpicture} 
\hspace{3cm}
\begin{tikzpicture}[scale=1.0]
\draw[->] (0,0)--(4,0);\draw[->] (0,0)--(0,3);
\draw [fill] (0,0) circle (0.07);
\draw [fill] (1,0) circle (0.07);
\draw [fill] (2,0) circle (0.07);
\draw [fill] (0,1) circle (0.07);
\draw [fill] (0,2) circle (0.07);
\draw (0,3) node[above] {$n_2$};
\draw (4,0) node[above] {$n_1$};
\draw (2,-.5) node[below] {$\mathcal{D}_2$};
\end{tikzpicture}
\end{center}
\caption{Two domains on which the generalized 24-cell association schemes are bivariate $P$-polynomial of type $(\al,\be)$, with $0\leq \al \leq 1$, $0\leq \be < 1$ in the first case and $\frac{1}{2} \leq \al < 1$, $0\leq \be < 1$ in the second case.}
\label{Fig:twoDsfor24cell}
\end{figure}
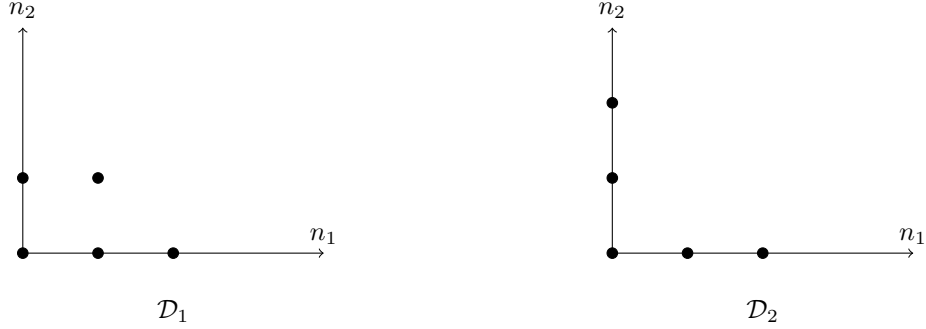

For any $0\leq\alpha\leq 1$, and $0\leq\beta<1$, 
the subset $\mathcal{D}_1$  is $\poo$-compatible and so are the  following two bivariate polynomials
\begin{equation}
v_{11}(x,y)=\frac{1}{(4s-1)(4s+1)}xy-y,\quad 
v_{20}(x,y)=\frac{1}{2(4s-1)(4s+1)}x^2-\frac{2(8s^2-1)}{(4s-1)(4s+1)} x-1.
\end{equation}
It follows that the generalized 24-cell association schemes $\mathcal{Z}$ are bivariate $P$-polynomial of type $(\alpha,\beta)$ on $\mathcal{D}_1$ with the labelling
\begin{equation}\label{eq:AD1}
 A_{00}=A_0\ , \quad A_{10}=A_2\ , \quad A_{01}=A_3\ , \quad A_{11}=A_1\ , \quad A_{20}=A_4.
\end{equation}
From the intersection numbers in the matrices $L_i$, one indeed finds that 
\begin{equation} \label{eq:poly11}
 A_{11}=v_{11}(A_{10},A_{01}), \quad A_{20}=v_{20}(A_{10},A_{01}),
\end{equation}
so that both conditions of Definition \ref{def:bi} are verified.
\begin{remark} There can exist more than one  labellings of the matrices $\{A_n \, | \, n\in\mathcal{D}\}$ so that a scheme is bivariate $P$-polynomial of type $(\alpha,\beta)$ on a given domain $\mathcal{D}$. For instance, generalized $24$-cell association schemes are also bivariate $P$-polynomial of type $(\alpha,\beta)$ on $\mathcal{D}_1$ with the polynomials $v_{ij}$ (as above) and the labelling $A_{00}=A_0$, $A_{10}=A_2$, $A_{01}=A_1$, $A_{11}=A_3$, $A_{20}=A_4$. 
\end{remark}

For $0\leq\alpha <1$ and $0\leq\beta <1$, the subset $\mathcal{D}_2$ is also $(\alpha,\beta)$-compatible.  If we further have $\frac12\leq\alpha<1$, 
then the same holds for the bivariate polynomial
\begin{equation}
v_{02}(x,y)=\frac{1}{2(\ell-1)s(4s+1)}\Big(y^2-2(\ell-1)s(4s-1)y-8\ell s^2x-16\ell s^2\Big).
\end{equation}
It follows that the generalized 24-cell association schemes $\mathcal{Z}$ are also bivariate $P$-polynomial of type $(\alpha,\beta)$ on $\mathcal{D}_2$ with $\frac12\leq\alpha<1$, $0\leq\beta<1$ and with the labelling 
\begin{equation}\label{eq:AD2}
 A_{00}=A_0\ , \quad A_{10}=A_2\ , \quad A_{01}=A_3\ , \quad A_{02}=A_1\ , \quad A_{20}=A_4.
\end{equation}
Indeed, one can check using the intersection numbers in $L_i$ that
\begin{equation} \label{eq:poly11}
 A_{02}=v_{02}(A_{10},A_{01}), \quad A_{20}=v_{20}(A_{10},A_{01}).
\end{equation} 
The two labellings show that there can be more than one region $\mathcal{D}$ on which an association scheme is bivariate $P$-polynomial of type $(\alpha,\beta)$.

We now check whether the generalized $24$-cell schemes are also bivariate $P$-polynomials in the sense of Definition \ref{defmv} with respect to the monomial order in \eqref{od1} with the labellings \eqref{eq:AD1}
and \eqref{eq:AD2} associated to the domains $\mathcal{D}_1$ and $\mathcal{D}_2$, respectively. For both labellings, it is straightforward to check that conditions (i) and (ii) of this definition are verified. Condition (iii) also holds with the labelling \eqref{eq:AD1} and domain $\mathcal{D}_1$. Furthermore, one can verify that the refined conditions (ii') and (iii') of Definition \ref{def:mvaspo} hold with the partial order $\poo$ if $\alpha=1$ and $0\leq \beta < 1$.  However, we have the following relation in terms of the labelling \eqref{eq:AD2}, 
\begin{equation}
A_{10}A_{01}=(4s-1)(4s+1)A_{02}+(4s-1)(4s+1)A_{01}. 
\end{equation}
Since $(0,2) \nleq (1,1)$ with the order \eqref{od1}, it follows that condition (iii) of Definition \ref{defmv} is not satisfied with the usual monomial order \eqref{od1} and the labelling \eqref{eq:AD2} associated with the domain $\mathcal{D}_2$. Hence, the generalized $24$-cell schemes $\mathcal{Z}$,
which are bivariate $P$-polynomial schemes of type $(\alpha,\beta)$ on both $\mathcal{D}_1$ and $\mathcal{D}_2$, 
are bivariate $P$-polynomial on $\mathcal{D}_1$ in the sense of Definition \ref{defmv} with respect to the monomial order \eqref{od1} (and even in the more restrictive case of Definition \ref{def:mvaspo} using the partial order $\preceq_{(1,\beta)}$ for any $0\leq \beta<1$), but not on $\mathcal{D}_2$. 

\begin{remark} If one uses instead the case $m=2$ of the monomial order $\leq$ defined in \eqref{CPord}, the opposite situation arises: $\mathcal{Z}$ are bivariate $P$-polynomial schemes in the sense of Definition \ref{defmv} on $\mathcal{D}_2$, but not on $\mathcal{D}_1$. This order is usually not considered in the framework of $(\alpha, \beta)$-compatible schemes since it is only implied by the partial order $\preceq_{(\alpha,\beta)}$ when $\alpha \neq 1$.
\end{remark}

\section{Outlook}\label{s6}

This paper introduced and explored the concept of $m$-distance-regular graphs, establishing a significant correspondence between these graphs and the newly introduced $m$-variate $P$-polynomial association schemes. Refined notions and additional constraints for these association schemes and graphs using a partial order with which the original monomial order is compatible were also examined. In particular, a correspondence between a class of $2$-distance-regular graphs and a class of bivariate $P$-polynomial schemes of type $(\alpha,\beta)$ was established. As an outlook, we propose a list of questions for further investigation.

\begin{enumerate}
\item Are there regularity conditions characterizing the set of $m$-distance-regular graphs with $m \in \mathbb{N}$?

\item What is the relation between a graph being $m$-distance-regular and its set of automorphisms? Can a notion of $m$-distance-transitivity be defined which would imply $m$-distance-regularity?

\item For a given $k = (k_1, k_2,\dots, k_m)$, are there finitely many $m$-distance-regular graph $G=(X,\Gamma_1\sqcup\Gamma_2\sqcup \cdots \sqcup \Gamma_m)$ such that the graph $(X,\Gamma_i)$ is  $k_i$ regular?

\item What properties do distance-regular and $m$-distance-regular graphs share? What can we say about the spectral properties of $m$-distance-regular graphs?

\item Which graphs are associated to the other known examples of multivariate $P$-polynomial schemes? Which $m$-distance-regular graphs are associated to the generalized Johnson schemes? What about schemes based on attenuated spaces? 

\item Under what conditions can a graph be both $m$-distance-regular and $n$-distance-regular with $n \neq m$?

\item Is there an association scheme that is bivariate $P$-polynomial of type $(\alpha,\beta)$, but is not bivariate $P$-polynomial in the sense of Definition \ref{defmv}, no matter what labelling or what monomial order we use? What about the inverse statement? 
\end{enumerate}

\section*{Acknowledgement}
We thank E. Bannai and H. Kurihara for the inspiring discussions and for sharing with us a draft of \cite{bannai2023multivariate} before its publication. PAB and MZ hold an Alexander-Graham-Bell scholarship from the Natural Sciences and Engineering Research Council of Canada (NSERC). NC is partially supported by Agence Nationale de la Recherche Projet AHA ANR-18-CE40-0001 and by the IRP AAPT of CNRS. The research of LV is supported by a Discovery Grant from NSERC.

\appendix

\section{Complementary proofs} \label{sec:appendix}

This appendix presents the proofs of Propositions \ref{mprop} and \ref{mproppo}, starting with a useful lemma. The necessary notions are defined in Section \ref{s2} and Subsection \ref{ssec:porefin}. We will use the notations $\bA=(A_{e_1},A_{e_2},\dots,A_{e_m})$ and $\bA^a=\prod_{i=1}^m A_{e_i}^{a_i}$ for $a=(a_1,a_2,\dots,a_m) \in \NN^m$.

The following result is based on part of the proof of Lemma 2.14 in \cite{bannai2023multivariate}, but slightly generalized (since $b$ in equation \eqref{eq:span} below is not required to be in $\cD$).

\begin{lemma}\label{lem:span}
Let $\cD \subset \NN^m$ such that $e_1,e_2,\dots,e_m \in \cD$, let $\leq$ be a monomial order on $\NN^m$ and let $\cZ=\{A_n \ | \ n \in \cD \}$ be an association scheme. 
Take any $b \in \NN^m$. 
If for all $n \in \cD$ such that $n \leq b$ we have $A_n = v_n(\bA)$ for some $m$-variate polynomial $v_n$ of multidegree $n$, all of whose monomials are in $\cD$, then 
\begin{equation}
\lspan\{A_n \ | \ n \in \cD, \ n \leq b \} = \lspan\{\bA^n \ | \ n \in \cD, \ n \leq b \}. \label{eq:span}
\end{equation}
Furthermore, if $\po$ is a partial order on $\NN^m$ satisfying the properties \textnormal{(i)--(iii)} at the beginning of Subsection \ref{ssec:porefin}, and if the above polynomials $v_n$ are $\po$-compatible, then \eqref{eq:span} holds with $\leq$ replaced by $\po$.
\end{lemma}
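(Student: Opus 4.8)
The plan is to prove the two spanning equalities by strong induction on the finite set $\{n \in \cD \mid n \leq b\}$ (resp.\ $\{n \in \cD \mid n \po b\}$), ordered by $\leq$ (resp.\ by $\po$). The key structural fact is that each $A_n$ with $n \leq b$ (resp.\ $n \po b$) is, by hypothesis, a polynomial $v_n(\bA)$ whose monomials $\bA^a$ all satisfy $a \in \cD$ and $a \leq n$ (resp.\ $a \po n$) — the latter because $v_n$ has multidegree $n$ (resp.\ is $\po$-compatible) — with the leading monomial $\bA^n$ appearing with nonzero coefficient. Conversely, I want to invert this triangular relationship to write each $\bA^n$ as a linear combination of the $A_a$'s with $a \in \cD$ and $a \leq n$ (resp.\ $a \po n$).

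First I would establish the inclusion $\lspan\{A_n \mid n \in \cD,\ n \leq b\} \subseteq \lspan\{\bA^n \mid n \in \cD,\ n \leq b\}$. This is immediate: for each such $n$, $A_n = v_n(\bA) = \sum_{a} f_a \bA^a$ where every monomial $\bA^a$ has $a \in \cD$, and $a \leq n \leq b$ by transitivity of $\leq$ (resp.\ $a \po n \po b$ by transitivity of $\po$). So $A_n$ already lies in the right-hand span. For the reverse inclusion, I proceed by induction: fix $n \in \cD$ with $n \leq b$ and suppose $\bA^{n'} \in \lspan\{A_a \mid a \in \cD,\ a \leq n',\ a \le b\}$ for all $n' \in \cD$ with $n' < n$. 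From $A_n = f_n \bA^n + \sum_{a < n} f_a \bA^a$ with $f_n \neq 0$ and all $a \in \cD$, solve for $\bA^n = f_n^{-1}\big(A_n - \sum_{a<n} f_a \bA^a\big)$; each $\bA^a$ on the right has $a \in \cD$, $a < n \leq b$, so by the induction hypothesis it lies in $\lspan\{A_c \mid c \in \cD,\ c \leq b\}$, and $A_n$ trivially does too. Hence $\bA^n$ lies in that span. Since $\leq$ is a well-ordering (Definition \ref{defmo}(iii)), and the relevant index set is in any case finite, the induction is valid. This gives $\lspan\{\bA^n \mid n \in \cD,\ n \leq b\} \subseteq \lspan\{A_n \mid n \in \cD,\ n \leq b\}$, completing the first equality.

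For the $\po$ version the argument is structurally identical, replacing $\leq$ by $\po$ throughout: the $\po$-compatibility of each $v_n$ guarantees its monomials $\bA^a$ satisfy $a \po n$, and one inducts on the finite poset $\{n \in \cD \mid n \po b\}$. One subtlety to flag: since $\po$ is only a partial order, "induction" means taking a minimal counterexample rather than invoking a well-ordering — but the index set is finite (it is a subset of $\cD$, and in fact $\cD$ is finite as the label set of a finite scheme, or one can intersect with $\{n : n \po b\}$ which is finite since $\po$ refines the monomial order $\leq$ which is a well-ordering, so $\{n : n\po b\}\subseteq\{n: n\le b\}$ is an initial segment of a well-order with no infinite descending chain below $b$), so minimal counterexamples exist and the argument goes through. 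The main (and only real) obstacle is bookkeeping the two inclusions and making sure the transitivity chains ($a \leq n \leq b$ and $a \po n \po b$) are applied correctly; there is no deep content beyond the triangularity of the change of basis between $\{A_n\}$ and $\{\bA^n\}$ induced by the multidegree condition, which mirrors exactly the classical $P$-polynomial case.
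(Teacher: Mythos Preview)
Your proof is correct, but it follows a different route from the paper's. The paper establishes only the inclusion
\[
\lspan\{A_n \mid n \in \cD,\ n \leq b\} \subseteq \lspan\{\bA^n \mid n \in \cD,\ n \leq b\}
\]
exactly as you do, and then closes by a \emph{dimension count}: both sides sit inside a space of dimension at most $|\{n \in \cD : n \leq b\}|$, and the left-hand side already has this dimension because the adjacency matrices $A_n$ of an association scheme are linearly independent. You instead invert the triangular system directly by strong induction on $\leq$ (resp.\ by minimal counterexample on $\po$), solving $\bA^n = f_n^{-1}\bigl(A_n - \sum_{a<n} f_a \bA^a\bigr)$. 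Your argument is a bit longer but more elementary --- it never uses the linear independence of the $A_n$'s, so it would work for any family of matrices satisfying the polynomial hypothesis, not just adjacency matrices of a scheme --- and it yields the slightly sharper statement $\bA^n \in \lspan\{A_a \mid a \in \cD,\ a \leq n\}$ along the way.

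One small correction: your aside that $\{n : n \leq b\}$ is finite because it is ``an initial segment of a well-order'' is false in general (e.g.\ in lex order on $\NN^2$, the set $\{n : n \leq (1,0)\}$ is infinite). Your other justification, that $\cD$ is finite as the index set of an association scheme, is correct and suffices. In fact finiteness is not even needed: since $a \po b \Rightarrow a \leq b$ and $\leq$ is a well-order, the partial order $\po$ is well-founded, so the minimal-counterexample argument goes through regardless.
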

\begin{proof}
Suppose that $n \in \cD$ and $n \leq b$. We have by hypothesis that $A_n$ can be expressed as a linear combination of elements $\bA^a$ with $a \in \cD$ and $a \leq n$. By transitivity of the total order $\leq$, we  also have $a \leq b$. This shows that
\begin{equation}
\lspan\{A_n \ | \ n \in \cD, \ n \leq b \} \subseteq \lspan\{\bA^n \ | \ n \in \cD, \ n \leq b \}.\label{eq:incspan}
\end{equation}
It follows that
\begin{align}
\dim(\lspan\{A_n \ | \ n \in \cD, \ n \leq b \}) \leq \dim(\lspan\{\bA^n \ | \ n \in \cD, \ n \leq b \}) \leq |\{ n \in \cD, \ n \leq b \}|.
\end{align}
Because the adjacency matrices $\{A_n \ | \ n \in \cD \}$ are linearly independent, we have 
\begin{align}
|\{ n \in \cD, \ n \leq b \}| &= \dim(\lspan\{A_n \ | \ n \in \cD, \ n \leq b \}).
\end{align}
Therefore the dimensions of both sides of \eqref{eq:incspan} are equal, and \eqref{eq:span} holds. 

Now assume the polynomials $v_n$ are $\po$-compatible for all $n \in \cD$ with $n \po b$.  Take any such $n$, then by Definition \ref{def:compa}, $A_n$ can be expressed as a linear combination of elements $\bA^a$ with $a \in \cD$ and $a \po n$. By transitivity of $\po$, we have $a \po b$. The rest of the proof follows the same lines as previously, with the monomial order $\leq$ on $\NN^m$ replaced by $\po$.
\end{proof}

Condition (iii) of Definition \ref{defmv} is about monomials of multidegree adjacent to the boundary of $\cD$ but not in $\cD$. 
Let us introduce the notion of extended domain $\cDext$, which consists of the domain $\cD$ together with the elements in $\NN^m$ that are just outside of $\cD$, near the boundary:
\begin{equation}
\cDext  = \cD \cup \{ a \in \NN^m \ | \ \exists \ i \ \text{s.t.} \ a-e_i \in \cD \}. 
\end{equation}
\begin{lemma}\label{statement} 
Let $\mathcal{D}\subset \mathbb{N}^m$ such that $e_1, e_2, \dots, e_m\in \mathcal{D}$ and $\mathcal{Z} = \{A_n \ | \ n \in \mathcal{D}\}$ be a commutative association scheme. Suppose 
 condition (i) of Definition \ref{defmv} holds for $\mathcal{D}$. Suppose the  intersection numbers satisfy,  
for each $i = 1,2,\dots, m$ and each $a \in \mathcal{D}$, $p_{e_i, a}^b \neq 0$ for $b \in \mathcal{D}$ implies $b \leq a + e_i$, with $p_{e_i, a}^{a + e_i} \neq 0$ if  $a + e_i \in \mathcal{D}$. 
Then for $a \in \cDext$, we have 
\begin{itemize}
\item[(i)] $\bA^a \in \lspan\{\bA^b \ | \ b \in \cD, b \leq a\}$, and furthermore
\item[(ii)] if $a \in \cD$, then $A_a = v_a(\bA)$ for some $m$-variate polynomial $v_a$  of multidegree $a$, all of whose monomials are in $\cD$.
\end{itemize}
If further 
the conditions on the intersection numbers in the statement still hold when 
a total order $\leq$  is replaced by the partial order $\po$, which satisfies the properties \textnormal{(i)--(iii)} at the beginning of Subsection \ref{ssec:porefin}, then   (i) still holds when $\leq$ is  replaced by $\po$, and  (ii) still holds when $v_a$ is additionally required to be  $\po$-compatible. 
\end{lemma}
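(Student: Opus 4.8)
The plan is to prove the statement by induction on $a \in \cDext$ along the monomial order $\leq$, which is a well-ordering and hence licenses such an induction; the partial-order version is then obtained by re-running the same argument with $\po$ in place of $\leq$ (still inducting along $\leq$). The base case $a=o$ is immediate: condition (i) of Definition \ref{defmv} forces $o\in\cD$, and $A_o=I=\bA^o$ since $\cZ$ is a commutative association scheme, so (i) and (ii) hold with $v_o=1$.

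For the inductive step, fix $a\in\cDext$ with $a\neq o$ and assume (i) and (ii) for every element of $\cDext$ strictly below $a$; in particular $A_n=v_n(\bA)$ for all $n\in\cD$ with $n<a$. First I would pick a useful index $i$: if $a\in\cD$, any $i$ with $a_i\geq1$ works, since $a-e_i\leq a$ componentwise and condition (i) of Definition \ref{defmv} then puts $a-e_i$ in $\cD$; if $a\notin\cD$, the definition of $\cDext$ provides an index $j$ with $a-e_j\in\cD$ (necessarily $a_j\geq1$) and I take $i=j$. Set $a':=a-e_i\in\cD$, so $a'<a$. Lemma \ref{lem:span} with $b=a'$, whose hypotheses are supplied by the inductive hypothesis (ii), gives $\bA^{a'}\in\lspan\{A_n\ |\ n\in\cD,\ n\leq a'\}$; writing $\bA^{a'}=\sum_{n\in\cD,\,n\leq a'}c_nA_n$, the linear independence of the $A_n$ combined with Lemma \ref{lem:span} forces the leading coefficient $c_{a'}$ to be nonzero. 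Multiplying by $A_{e_i}$ and using the Bose--Mesner relations gives $\bA^a=\sum_{n,b}c_n p_{e_i,n}^b A_b$, and the intersection-number hypothesis $p_{e_i,n}^b\neq0\Rightarrow b\leq n+e_i\leq a$ confines the sum to $b\in\cD$ with $b\leq a$; moreover, for $n<a'$ one cannot have $p_{e_i,n}^a\neq0$ (this would force $a\leq n+e_i<a$), so the coefficient of $A_a$ in $\bA^a$ equals $c_{a'}\,p_{e_i,a'}^{a'+e_i}$, which is nonzero exactly when $a=a'+e_i\in\cD$ by the clause $p_{e_i,a'}^{a'+e_i}\neq0$ of the hypothesis.

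To finish: if $a\in\cD$, I solve for $A_a$ as a linear combination of $\bA^a$ and the $A_b$ with $b<a$, $b\in\cD$, then substitute $A_b=v_b(\bA)$ (inductive hypothesis); the resulting polynomial $v_a$ satisfies $A_a=v_a(\bA)$, has a nonzero $\x^a$-term (no $v_b$ with $b<a$ contributes such a monomial) and all of its monomials lie in $\cD$, so $v_a$ has multidegree $a$ and (ii) holds, and substituting back into the expansion of $\bA^a$ yields (i). If $a\notin\cD$, the expansion of $\bA^a$ has no $A_a$ term, and substituting $A_b=v_b(\bA)$ for $b<a$ in $\cD$ gives (i) at once, while (ii) is vacuous. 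For the $\po$-refinement, Lemma \ref{lem:span} has a $\po$-version for $\po$-compatible polynomials, property (ii) of $\po$ yields $n\po a'\Rightarrow n+e_i\po a$, and antisymmetry of $\po$ plays the role of the strictness of $\leq$ in showing that only $n=a'$ contributes to the $A_a$-coefficient; the induction still proceeds along the well-order $\leq$, which is legitimate since $n\po a$ with $n\neq a$ implies $n<a$, and the $\po$-compatibility of $v_a$ follows because $a\po a$ and every monomial of a $v_b$ with $b\po a$ is $\po b\po a$.

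I expect the main obstacle to be the nonvanishing of the $A_a$-coefficient of $\bA^a$ precisely for $a\in\cD$: this forces one to combine the triangularity (equivalently, linear independence) coming from Lemma \ref{lem:span} to get $c_{a'}\neq0$, the strict (resp.\ antisymmetric) part of the intersection-number hypothesis to discard the contributions from $n<a'$, and the clause $p_{e_i,a'}^{a'+e_i}\neq0$. The remaining steps are routine triangular-induction bookkeeping.
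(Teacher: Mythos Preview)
Your argument is correct and follows essentially the same inductive strategy as the paper's proof: induction on the well-order $\leq$, splitting into the cases $a\in\cD$ and $a\notin\cD$, using the intersection-number hypothesis together with Lemma~\ref{lem:span}, and rerunning the argument with $\po$ for the refined version. The only organizational difference is that you treat both cases uniformly by first writing $\bA^{a-e_i}$ in the $A_n$-basis via Lemma~\ref{lem:span} and then multiplying by $A_{e_i}$, whereas the paper in Case~1 instead expands $A_{a-e_i}=v_{a-e_i}(\bA)$ first and multiplies, reserving Lemma~\ref{lem:span} for Case~2; this is a dual bookkeeping choice rather than a different idea (and note that when $a\in\cD$ part (i) is actually trivial, so your ``substituting back'' step is unnecessary there).
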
 
\begin{proof}
We prove the result by induction on $\leq$. 
The domain $\cD \subset \cDext$ contains $o=(0,\dots,0)\in \NN^m$, with $A_{o}$ the identity matrix. The statement is therefore true for $a=o$, with $v_o=1$. Now we suppose that the statement is true for all $b \in \cDext$ such that $b < a$ for some fixed $a \in \cDext$, $a>o$. We want to show that it is true for $a$. There are two possible cases.

{Case 1:} suppose $a \in \cD$. In this case, part (i) is trivial. We want to prove part (ii). 
Since $a>o$, we have $a_i\geq 1$ for some $i$.  By condition (i) of Definition \ref{defmv},   $a-e_i \in \cD$ holds.  By assumption, we have 
\begin{equation}
A_{e_i}A_{a-e_i} = p_{e_i,a-e_i}^{a} A_{a} + \sum_{\substack{b \in \cD \\ b < a }} p_{e_i,a-e_i}^{b} A_{b}, \quad \text{with} \ p_{e_i,a-e_i}^{a}  \neq 0. \label{eq:prinduc5}
\end{equation} 
For $b \in \cD$ such that $b < a$, we can use part (ii) of the induction hypothesis to write 
\begin{equation}
\sum_{\substack{b \in \cD \\ b < a }} p_{e_i,a-e_i}^{b} A_{b} = \sum_{\substack{b \in \cD \\ b < a}} p_{e_i,a-e_i}^{b}  \sum_{\substack{c \in \cD \\ c \leq  b}} {g}_{b,c} \bA^{c} = \sum_{\substack{c \in \cD \\ c <  a}} {g'}_{c} \bA^{c}. \label{eq:prinduc8}
\end{equation}
Since $a-e_i<a$, by part (ii) of the induction hypothesis we have $A_{a-e_i}=v_{a-e_i}(\bA)=\sum_{\substack{c \in \cD \\ c \leq  a-e_i}} {f}_{c} \bA^{c}$ with $f_{a-e_i}\neq 0$ and hence 
\begin{equation}
A_{e_i}A_{a-e_i} = 
{f}_{a-e_i} \bA^{a} + \sum_{\substack{c \in \cD \\ c <  a-e_i}} {f}_{c} \bA^{c+e_i}, \quad \text{with} \ f_{a-e_i}  \neq 0 . \label{eq:prinduc6}
\end{equation}
The terms in the sum of equation \eqref{eq:prinduc6} satisfy $c +e_i < a$ and $c+e_i \in \cDext$ since $c \in \cD$.  We can therefore apply part (i) of the induction hypothesis to each of these terms to deduce that
\begin{equation}
A_{e_i}A_{a-e_i} = {f}_{a-e_i} \bA^{a} + \sum_{\substack{c \in \cD \\ c <  a}} {f'}_{c} \bA^{c}, \quad \text{with} \ f_{a-e_i}  \neq 0 . \label{eq:prinduc7}
\end{equation}
Combining the results \eqref{eq:prinduc7} and \eqref{eq:prinduc8} in \eqref{eq:prinduc5}, we can express $A_{a}$ as a linear combination of $\bA^c$ with $c \in \cD$ and $c \leq a$, where the   coefficient for $\bA^a$ is non-zero. In other words, $A_a=v_a(\bA)$ for some  $m$-variate polynomial  $v_a$ of multidegree $a$ such that all the  monomials in $v_a(\x)$ are in $\cD$. This proves the induction statement in this case.\\

{Case 2:} suppose $a \notin \cD$. Then we only need to prove part (i) of the claim. By definition of $\cDext$ there exists an $i$ such that $a-e_i \in \cD$. By part (ii) of the induction hypothesis and Lemma \ref{lem:span}, equation \eqref{eq:span} holds for any $b \leq a$. In particular, it holds for $b=a-e_i < a$. Therefore we have 
\begin{equation}
\bA^a=A_{e_i}\bA^{a-e_i} = \sum_{\substack{c \in \cD \\ c \leq a-e_i}} {f}_{c} A_{e_i}A_{c} = \sum_{\substack{c \in \cD \\ c \leq a-e_i}} {f}_{c} \sum_{\substack{\ell \in \cD \\ \ell \leq c+e_i}} p_{e_i,c}^{\ell}A_{\ell} =  \sum_{\substack{\ell \in \cD \\ \ell < a}}{f'}_{\ell}A_{\ell},
\end{equation}
where we used the assumption on the intersection numbers in the third equality, and in the fourth equality $\ell$ cannot be equal to $a$ because $a \notin \cD$. Now since $\ell<a$, we can use part (ii) of the induction hypothesis to write $A_\ell$ as a linear combination of elements $\bA^c$ with $c \in \cD$ and $c \leq \ell < a$. This shows that the induction statement holds in this case.

By induction, we conclude the result  holds for all $a \in \cDext$.

Suppose now that the conditions on the intersection numbers still hold when 
the total order $\leq$  is replaced by a partial order $\po$ satisfying the given conditions.  
The proof of the stronger consequence in the lemma can still be done by induction on the total order $\leq$ and follows the same lines as above. One must replace $\leq$ by $\po$, except when using the induction hypothesis. The transitivity of the partial order $\po$ and the properties (i) and (ii) at the beginning of Subsection \ref{ssec:porefin} are needed in the proof. 
\end{proof}

The next result corresponds to Proposition 2.15 of \cite{bannai2023multivariate}.  
\begin{proposition}\label{propapp} Let $\mathcal{D}\subset \mathbb{N}^m$ such that $e_1, e_2, \dots, e_m\in \mathcal{D}$ and $\mathcal{Z} = \{A_n \ | \ n \in \mathcal{D}\}$ be a commutative association scheme. Then the following two statements are
equivalent:
\begin{enumerate}
\item[(i)] $\mathcal{Z}$ is an $m$-variate $P$-polynomial association scheme on $\mathcal{D}$ with respect to a monomial order $\leq$;
\item[(ii)] The condition (i) of Definition \ref{defmv} holds for $\mathcal{D}$ and the intersection numbers satisfy,  
for each $i = 1,2,\dots, m$ and each $a \in \mathcal{D}$, $p_{e_i, a}^b \neq 0$ for $b \in \mathcal{D}$ implies $b \leq a + e_i$. Moreover, if $a + e_i \in \mathcal{D}$, then $p_{e_i, a}^{a + e_i} \neq 0$ holds.
\end{enumerate}
Suppose furthermore that $\po$ is a partial order on $\NN^m$ satisfying the properties \textnormal{(i)--(iii)} at the beginning of Subsection \ref{ssec:porefin}. Then statement (i) with the additional assumption that $\cZ$ is $\po$-compatible is equivalent to statement (ii) with $\leq$ replaced by $\po$.
\end{proposition}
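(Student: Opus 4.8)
The strategy is to prove $(i)\Rightarrow(ii)$ and $(ii)\Rightarrow(i)$ separately, and in each direction to treat the plain ($\leq$) version and the $\po$-refined version in parallel, pointing to Lemma \ref{statement} and Lemma \ref{lem:span} for the bookkeeping. For $(i)\Rightarrow(ii)$: condition (i) of Definition \ref{defmv} is part of the definition of an $m$-variate $P$-polynomial scheme, so it transfers immediately; it remains to control the intersection numbers $p_{e_i,a}^b$. Fix $i$ and $a\in\cD$. Condition (iii) of Definition \ref{defmv} says $A_{e_i}\bA^a\in\lspan\{\bA^b \mid b\in\cD,\ b\leq a+e_i\}$. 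Using the spanning identity \eqref{eq:span} of Lemma \ref{lem:span} (valid because condition (ii) of Definition \ref{defmv} supplies the required polynomial expressions $A_n=v_n(\bA)$ with monomials in $\cD$ for all $n\leq a+e_i$), I can rewrite this as $A_{e_i}A_a\in\lspan\{A_b \mid b\in\cD,\ b\leq a+e_i\}$, and by linear independence of the adjacency matrices the expansion $A_{e_i}A_a=\sum_b p_{e_i,a}^b A_b$ can only involve $b\leq a+e_i$; hence $p_{e_i,a}^b\neq 0 \Rightarrow b\leq a+e_i$. For the nonvanishing of $p_{e_i,a}^{a+e_i}$ when $a+e_i\in\cD$: write $A_{a+e_i}=v_{a+e_i}(\bA)$ with leading monomial $\bA^{a+e_i}$ (coefficient $f_{a+e_i}\neq 0$); using \eqref{eq:span} for $b=a+e_i$ and again for $b=a$ one sees that $\bA^{a+e_i}$ appears in the expansion of $A_{e_i}A_a$ over the $A_b$'s exactly through the $A_{a+e_i}$ term (all lower $A_b$, $b\leq a$, expand into $\bA^c$ with $c\leq a\neq a+e_i$), forcing $p_{e_i,a}^{a+e_i}\neq 0$. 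In the $\po$-refined case, $\po$-compatibility of $\cZ$ gives the $\po$-compatible polynomials and condition (iii'), so the same argument with Lemma \ref{lem:span}'s $\po$-version yields $p_{e_i,a}^b\neq 0 \Rightarrow b\po a+e_i$.

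For $(ii)\Rightarrow(i)$: this is where Lemma \ref{statement} does the heavy lifting. Assuming (ii), I apply Lemma \ref{statement} with domain $\cD$ and extended domain $\cDext$. Part (ii) of that lemma gives, for every $a\in\cD$, an $m$-variate polynomial $v_a$ of multidegree $a$ with all monomials in $\cD$ such that $A_a=v_a(\bA)$ — this is exactly condition (ii) of Definition \ref{defmv}. For condition (iii), take $i\in\{1,\dots,m\}$ and $a\in\cD$; then $a+e_i\in\cDext$ by definition of $\cDext$, so part (i) of Lemma \ref{statement} gives $\bA^{a+e_i}\in\lspan\{\bA^b\mid b\in\cD,\ b\leq a+e_i\}$. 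Writing $A_{e_i}\bA^a=A_{e_i}A_{e_i}^{a_i-?}\cdots$ — more cleanly, $A_{e_i}\bA^a=\bA^{a+e_i}$ — this membership is precisely condition (iii). Together with condition (i) of Definition \ref{defmv} (which is assumed in (ii)), all three conditions hold, so $\cZ$ is $m$-variate $P$-polynomial with respect to $\leq$. For the refinement: under the $\po$-version of the hypotheses, the stronger conclusion of Lemma \ref{statement} supplies $\po$-compatible polynomials $v_a$ (giving (ii')) and $\bA^{a+e_i}\in\lspan\{\bA^b\mid b\in\cD,\ b\po a+e_i\}$ (giving (iii')), so $\cZ$ is a $\po$-compatible $m$-variate $P$-polynomial scheme.

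The main obstacle — and the reason Lemma \ref{statement} is isolated as a separate induction — is the bootstrapping in the $(ii)\Rightarrow(i)$ direction: one needs simultaneously that the $A_a$ are polynomials in $\bA$ \emph{and} that the "outside" monomials $\bA^a$ for $a\in\cDext\setminus\cD$ already lie in the span of the inside ones, and these two facts feed each other along the induction on $\leq$. Once Lemma \ref{statement} is available the proposition itself is a short deduction. A secondary point requiring care is that the induction in Lemma \ref{statement} (and hence implicitly here) must be run on the \emph{total} order $\leq$ even in the $\po$-refined statement — $\po$ is only a partial order, so it cannot by itself support a well-founded induction — with $\po$ used only to track which monomials/matrices appear in each expansion; I would remark on this explicitly to reassure the reader that the parallel treatment is legitimate.
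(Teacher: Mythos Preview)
Your proposal is correct and follows essentially the same approach as the paper: for $(i)\Rightarrow(ii)$ you expand $A_{e_i}A_a$ via condition (ii), reduce via condition (iii), and convert monomials back to adjacency matrices using Lemma~\ref{lem:span}, tracking the leading coefficient to get $p_{e_i,a}^{a+e_i}\neq 0$; for $(ii)\Rightarrow(i)$ you invoke Lemma~\ref{statement} to supply both the polynomials $v_a$ and the boundary reduction, exactly as the paper does. Two minor clean-ups: in your $(i)\Rightarrow(ii)$ argument the step from $A_{e_i}\bA^a$ to $A_{e_i}A_a$ should go through the expansion $A_a=v_a(\bA)$ with condition (iii) applied to \emph{each} term $A_{e_i}\bA^c$ for $c\in\cD$, $c\leq a$ (not just $c=a$), and in the nonvanishing argument the ``lower'' matrices $A_b$ that cannot contribute $\bA^{a+e_i}$ are those with $b<a+e_i$, not merely $b\leq a$.
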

\begin{proof}
(i)~$\Rightarrow$~(ii). We need to show that, for any $a \in \cD$ and any $i\in \{1,2,\dots,m\}$,
\begin{align}
A_{e_i}A_{a} = \sum_{\substack{b \in \cD \\ b \leq  a + e_i}} p_{e_i,a}^{b}A_{b}, \qquad \text{with} \ p_{e_i,a}^{a + e_i} \neq 0 \ \text{if} \ a+e_i \in \cD. \label{eq:Ametric}
\end{align}
By condition (ii) of Definition \ref{defmv} we have
\begin{equation}
A_{e_i}A_{a} = A_{e_i}v_{a}(\bA) = \sum_{\substack{b \in \cD \\ b \leq  a}} f_b \bA^{b+e_i}, \qquad \text{with} \  f_{a}\neq 0.
\end{equation}
By condition (iii) of Definition \ref{defmv}, since $b \in \cD$ we have that $\bA^{b+e_i}$ can be expressed as a linear combination of elements $\bA^c$ with $c \in \cD$ and $c \leq b + e_i \leq a + e_i$. Therefore, we can write
\begin{equation}
A_{e_i}A_{a}  = \sum_{\substack{c \in \cD \\ c \leq  a+e_i}} f'_c \bA^{c}.
\end{equation}
Using Lemma \ref{lem:span}, we deduce that \eqref{eq:span} holds for all $b \in \NN^m$ (in particular for $b=a+e_i$), hence
\begin{equation}
A_{e_i}A_{a}  = \sum_{\substack{c \in \cD \\ c \leq  a+e_i}} f''_c A_{c}. \label{eq:sumA10}
\end{equation}
If $a+e_i \in \cD$, then $f'_{a+e_i} = f_{a} \neq 0$. Moreover, by Lemma \ref{lem:span}, we also deduce that $f''_{a+e_i} \neq 0$. Since the adjacency matrices form a basis of the Bose--Mesner algebra, the coefficients appearing in \eqref{eq:sumA10} correspond to the intersection numbers, that is $f''_{c}=p_{e_i,a}^{c}$. This shows \eqref{eq:Ametric}. 

If further, $\cZ$ is $\po$-compatible, then the proof follows the same lines, with $\leq$ replaced by $\po$. One must use: Definition \ref{def:compa}, conditions (ii') and (iii') of Definition \ref{def:mvaspo} instead of conditions (ii) and (iii) of Definition \ref{defmv}, the transitivity of the partial order $\po$, and the property (ii) at the beginning of Subsection \ref{ssec:porefin} with $c=e_i$. 

(ii)~$\Rightarrow$~(i). 
By Lemma~\ref{statement}, 
  conditions (ii) and (iii) of Definition \ref{defmv} hold, in addition to condition (i), therefore $\mathcal{Z}$ is an $m$-variate $P$-polynomial association scheme. The $\po$-compatibility of $\mathcal{Z}$ under the stronger conditions follows similarly. 
\end{proof}

\bibliographystyle{IEEEtran}
\bibliography{IEEEabrv,mybibfile} 

\end{document}